\newtheorem{thm}{Theorem}[section]
\newtheorem{lemma}[thm]{Lemma}
\newtheorem{prop}[thm]{Proposition}
\newtheorem{cor}[thm]{Corollary}
\theoremstyle{remark}
\newtheorem{defi}[thm]{Definition}
\newcommand{\Z}{\mathbb{Z}}
\newcommand{\Q}{\mathbb{Q}}
\newcommand{\R}{\mathbb{R}}
\newcommand{\C}{\mathcal{C}}
\newcommand{\F}{\mathbb{F}}
\newcommand{\D}{D}
\newcommand{\inv}{^{-1}}
\newcommand{\es}{S^2 \times S^1}
\newcommand{\knoti}[1]{\kappa(#1)}
\newcommand{\dtw}{\underline{d}}
\newcommand{\de}{\partial}
\newcommand{\Yt}{(Y,\ft)}
\newcommand{\ft}{\mathfrak{t}}
\newcommand{\fs}{\mathfrak{s}}
\newcommand{\fto}{\ft^{\rm op}}
\newcommand{\spinc}{spin$^c$~}
\newcommand{\Spinc}{{\rm Spin}^c}
\newcommand{\Zmod}[1]{\Z/{#1}\Z}
\newcommand{\gsh}{g^0_{\rm sh}}
\DeclareMathOperator{\geomi}{\pitchfork}
\DeclareMathOperator{\PD}{PD}
\DeclareMathOperator{\HF}{HF^+}
\DeclareMathOperator{\HFo}{HF^+_{odd}}
\DeclareMathOperator{\HFe}{HF^+_{even}}
\DeclareMathOperator{\HFoo}{HF^\infty}
\DeclareMathOperator{\tHF}{\underline{HF}^+}
\DeclareMathOperator{\HFred}{HF_{red}}
\DeclareMathOperator{\HFL}{\widehat{HFL}}
\DeclareMathOperator{\lk}{lk}
\let\@wraptoccontribs\wraptoccontribs
\title[Heegaard Floer homology, concordance, and the Thurston norm]{Heegaard Floer homology and concordance bounds on the Thurston norm}
\author{Daniele Celoria}
\address{Mathematical Institute, University of Oxford, Oxford, UK}
\email{Daniele.Celoria@maths.ox.ac.uk}
\author{Marco Golla}
\address{CNRS, Laboratoire de Math\'ematiques Jean Leray, Nantes, France}
\email{marco.golla@univ-nantes.fr}
\subjclass[2010]{Primary 57M25-57M27}
\date{}
\begin{document}

\begin{abstract}
We prove that twisted correction terms in Heegaard Floer homology provide lower bounds on the Thurston norm of certain cohomology classes determined by the strong concordance class of a $2$-component link $L$ in $S^3$. We then specialise this procedure to knots in $\es$, and obtain a lower bound on their geometric winding number. We then provide an infinite family of null-homologous knots with increasing geometric winding number, on which the bound is sharp.
\end{abstract}

\maketitle

\section{Introduction}

Consider a 2-component link $L = K_0 \cup K_1 \subset S^3$, such that $\lk(K_0,K_1) = 0$. Recall that two such links are \emph{strongly concordant} if they are the boundary of a pair of disjoint properly embedded smooth annuli in $S^3 \times [0,1]$.

In this note we are going to show that twisted correction terms, defined by Behrens and the second author in~\cite{behrens2015heegaard}, can be used to give lower bounds on the Thurston norm~\cite{thurston1986norm} $x$ of certain cohomology classes, determined by the strong concordance class of the link $L$. More specifically, call $\mu_i$ the meridian of the component $K_i$; one can consider the minimum attained by $x$ on the classes $\PD[\mu_i^\prime] \in H^2(S^3,L^\prime)$ over all links $L^\prime = K_0^\prime \cup K_1^\prime$ strongly concordant to $L$. As a notational shortcut we will always assume that the each connected component of the concordance cobounds $K_i \cup -K_i^\prime$.
 
\begin{thm}\label{t:mainthurston}
Let $L = K_0 \cup K_1$ be a $2$-component link, with $\lk(K_0,K_1) =0$. Let $Y$ be the $3$-manifold obtained as $0$-surgery along $K_0$ and $1$-surgery along $K_1$. Then
\begin{equation}\label{e:geomlowerboundthurstonconcordance}
\min_{L^\prime \sim L } \left\lceil\frac{x(\PD[\mu_0^\prime]) +1}4 \right\rceil \ge \frac{\dtw(Y) + \dtw(-Y) + 1}2.
\end{equation}
\end{thm}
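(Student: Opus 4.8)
The plan rests on two ingredients: the right-hand side of~\eqref{e:geomlowerboundthurstonconcordance} is a strong concordance invariant, and there is an adjunction-type genus bound for the twisted correction terms of a $0$-surgery. For the first, given $L'\sim L$, remove the two concordance annuli from $S^3\times[0,1]$ and reglue with the $0$-framing along the component cobounding $K_0\cup -K_0'$ and the $1$-framing along the one cobounding $K_1\cup -K_1'$; the $0$- and $1$-framings of a knot are preserved along a concordance, so these regluings are consistent, and the outcome is a cobordism $W$ from $Y$ to $Y'$, where $Y'$ is built from $L'$ exactly as $Y$ is from $L$. Because the annuli are products $S^1\times[0,1]$, $W$ is a homology cobordism; and, $\lk(K_0,K_1)$ being $0$, the annulus joining the two copies of $K_1$ lifts to the infinite cyclic cover dual to $\mu_0$, so $W$ is a homology cobordism over $\Z[t,t^{-1}]$. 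By the invariance of $\dtw$ under such twisted homology cobordisms (cf.~\cite{behrens2015heegaard}), $\dtw(Y)=\dtw(Y')$ and $\dtw(-Y)=\dtw(-Y')$. Hence it suffices to prove, for \emph{every} $2$-component link $L$ with $\lk(K_0,K_1)=0$, the inequality
\[
\left\lceil\frac{x(\PD[\mu_0])+1}{4}\right\rceil \ \ge\ \frac{\dtw(Y)+\dtw(-Y)+1}{2},
\]
and then to apply it to each member of the concordance class.

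Next I reinterpret both sides. Since $K_1$ is a knot in $S^3$, $M:=S^3_1(K_1)$ is an integer homology sphere, $K_0$ is null-homologous in it, and $Y=M_0(K_0)$. On the topological side, a standard Thurston-norm computation shows that the minimum of $\chi_-$ over surfaces representing $\PD[\mu_0]\in H^2(S^3,L)$ is attained by a connected Seifert surface for $K_0$ disjoint from $K_1$; so, for $g\ge 1$, $x(\PD[\mu_0])=2g-1$, where $g$ is the least genus of such a surface---equivalently the genus of $K_0$ as a knot in $M$---and $\lceil(x(\PD[\mu_0])+1)/4\rceil=\lceil g/2\rceil$. (If $g=0$ then $K_0$ is a $K_1$-split unknot, $Y\cong\es\#M$, and the inequality is trivial: its left side equals $1$.) Thus the goal reduces to the adjunction statement: \emph{if $Y$ is the $0$-surgery on a null-homologous genus-$g$ knot in an integer homology sphere, then $\dtw(Y)+\dtw(-Y)\le 2\lceil g/2\rceil-1$.}

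This adjunction inequality is the crux, and the step I expect to be hardest. To prove it I would feed $\underline{CFK}^\infty(M,K_0)$ into the twisted integer-surgery mapping-cone formula computing $\underline{HF}^+(M_0(K_0))$, and likewise for $-Y=(-M)_0(K_0)$. Reading off the bottom of the tower, $\dtw(Y)$ and $\dtw(-Y)$ are governed by the non-negative local $h$-invariants $V_0$ and $H_0$ of $K_0\subset M$, up to the \emph{opposite} shifts $\pm d(M)$, which cancel in the sum; hence $\dtw(Y)+\dtw(-Y)=2(V_0+H_0)-1$, so that $\tfrac12(\dtw(Y)+\dtw(-Y)+1)=V_0+H_0$. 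What remains is the genus bound $V_0+H_0\le\lceil g/2\rceil$. Each of $V_0,H_0$ is controlled by the Seifert genus via the large-surgery adjunction inequality (the high Alexander-graded parts of $\underline{CFK}^\infty(M,K_0)$ vanish beyond degree $g$); the delicate point is bounding the \emph{sum} by $\lceil g/2\rceil$, which requires exploiting the interaction of the $\{i\ge 0\}$ and $\{j\ge 0\}$ truncations of the knot complex---morally, that $\nu^+$ of $K_0$, or of $K_0\subset -M$, must vanish, so that $V_0$ and $H_0$ are not simultaneously large. This is the twisted, homology-sphere refinement of Ozsv\'ath--Szab\'o's genus bounds for the correction terms of $0$-surgeries. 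Granting it, $\tfrac12(\dtw(Y)+\dtw(-Y)+1)=V_0+H_0\le\lceil g/2\rceil=\lceil(x(\PD[\mu_0])+1)/4\rceil$, which with the concordance reduction completes the proof.
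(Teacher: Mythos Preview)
Your reduction is sound up to the adjunction step: the paper likewise observes that $\dtw(\pm Y)$ are strong-concordance invariants (integral homology cobordism invariance of $\dtw$ suffices; the $\Z[t,t^{-1}]$ refinement is not needed), and its Lemma~\ref{lemma:gtildeforever} identifies $(1+x(\PD[\mu_0]))/2$ with the minimal genus $g$ of a Seifert surface for $K_0$ in $S^3\setminus K_1$, so the left-hand side is indeed $\lceil g/2\rceil$.

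The gap is at the crux. You propose to express $\tfrac12(\dtw(Y)+\dtw(-Y)+1)$ as $V_0+H_0$ via a mapping-cone computation and then prove $V_0+H_0\le\lceil g/2\rceil$ from the knot complex, but for the latter you offer only a heuristic (``morally, $\nu^+$ must vanish on one side''). There is no such vanishing principle for knots in a general integer homology sphere $M=S^3_1(K_1)$; nothing in the structure of $CFK^\infty(M,K_0)$ forces one of the two $V_0$-type invariants to be zero, and the individual large-surgery bounds you cite only give $V_0,H_0\le g$, not control on their sum. In fact, granting your own formula, the bound $V_0+H_0\le\lceil g/2\rceil$ is \emph{equivalent} to the inequality you are trying to prove, so you have restated the problem rather than reduced it. (The formula itself, extending Proposition~\ref{p:0surgery} from $S^3$ to arbitrary $M$, also requires justification.)

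The paper's route is entirely different and sidesteps all of this. Cap off a minimal-genus Seifert surface for $K_0$ in $S^3\setminus K_1$ with the core of the $0$-framed $2$-handle to get a closed genus-$g$ surface $\Sigma'$ generating $H_2(Y)$; push it into $Y\times I$ as $\Sigma=\Sigma'\times\{\tfrac12\}$, where it has trivial normal bundle; delete a tubular neighbourhood to obtain a negative semidefinite cobordism $W$ from $Y\sqcup -Y$ (or rather $Y\# -Y$, after tubing) to $\Sigma\times S^1$. A direct argument (Lemma~\ref{l:injectiveH1}) shows $H_1(Y\sqcup -Y;\Q)\hookrightarrow H_1(W;\Q)$, so the Behrens--Golla inequality (Theorem~\ref{t:dtwcobordism}) applies and yields
\[
4\dtw(Y)+4\dtw(-Y)+4 \ \le\ 4\dtw(\Sigma\times S^1)+2b_1(\Sigma\times S^1)\ =\ 8\left\lceil \frac g2\right\rceil,
\]
the last equality being Proposition~\ref{p:dtwcomputations}. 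No analysis of $CFK^\infty$ or of local $V_0$-invariants is required.
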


Here $\dtw(Y)$ denotes the correction term of $\tHF (Y)$, the Heegaard Floer homology with fully twisted coefficients, in the unique $\ft \in \Spinc (Y)$ with vanishing Chern class. We are actually going to prove a slightly stronger result (Theorem~\ref{t:geomlowerboundthurston}) in Section~\ref{sec:dimostrazione}. 

In what follows we specialise Theorem~\ref{t:mainthurston} to $2$-component links with one trivial component, on which we perform a $0$-framed surgery. Note that, by the positive solution to the Property R conjecture~\cite{gabai1987foliations} this is the only possible case in which the image of the other component becomes a knot in $\es$ after the surgery.

In their seminal work~\cite{ozsvath2004holomorphicknot} Ozsv\'ath and Szab\'o define knot Floer homology for (null-homologous) links in a general $3$-manifold, by a process they call \emph{knotification};
this procedure associates to a $n$-component null-homologous link $L$ in the $3$-manifold $Y$, a null-homologous knot in $Y \#^{n-1} S^2 \times S^1$. 

Recently, this construction has been exploited by Hedden and Kuzbary~\cite{heddenkuzbary} to provide a further way of defining a concordance group of links in $S^3$ (see also~\cite{hosokawa1967concept} and~\cite{donald2012concordance} for previous approaches to the definition of such a group).

Now consider a link $L = \bigcirc \cup K \subset S^3$, with $\lk(\bigcirc,K) =0$; by doing $0$-surgery on $\bigcirc$, the other component becomes a knot in $\es$. Following~\cite{davis2017concordance} we define the \emph{geometric winding number} $\geomi(K)$; this is just the minimal geometric intersection number between a knot $K \subset \es$ and a $2$-sphere generating $H_2(\es;\Z)$; this invariant was called \emph{wrapping number} in~\cite{wrappinglivingston}. 
We can state the bound given by Theorem~\ref{t:mainthurston} in this case, and obtain an obstruction to being knotified, up to concordance in $S^2 \times S^1$.

\begin{thm}\label{t:main}
Let $K$ be a null-homologous knot in $\es$, and let $Y_K$ the $3$-manifold obtained as $+1$-surgery along $K$. Then
\begin{equation}\label{e:geomlowerbound}
\min_{K^\prime \sim K} \left\lceil\frac{\geomi(K^\prime)}4\right\rceil \ge \frac{\dtw(Y_K) + \dtw(-Y_K) + 1}2.
\end{equation}
\end{thm}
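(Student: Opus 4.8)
The plan is to deduce Theorem~\ref{t:main} from Theorem~\ref{t:mainthurston} by setting $K_0 = \bigcirc$ and $K_1 = K$, where now $K \subset S^3$ is a knot disjoint from an unknot $\bigcirc$ with $\lk(\bigcirc, K) = 0$; after $0$-surgery on $\bigcirc$ the complement of $\bigcirc$ becomes $\es$, and $K$ becomes the prescribed null-homologous knot there. So first I would make precise the translation between the two settings: doing $0$-surgery on $\bigcirc$ and $1$-surgery on $K_1$ yields exactly the manifold $Y_K$ ($+1$-surgery on $K \subset \es$), so the right-hand side of~\eqref{e:geomlowerbound} matches the right-hand side of~\eqref{e:geomlowerboundthurstonconcordance} verbatim. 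Likewise, a strong concordance in $S^3 \times [0,1]$ between $\bigcirc \cup K$ and $\bigcirc' \cup K'$ in which the first annulus cobounds $\bigcirc \cup -\bigcirc'$ forces $\bigcirc'$ to be unknotted (as in the hypothesis, the component over the trivial knot stays trivial, which is where the appeal to Property~R in the surrounding text is used), so the $0$-surgery on $\bigcirc'$ is again $\es$ and $K'$ descends to a knot $K' \subset \es$ concordant to $K$ in $S^2 \times I$. Hence the minimum over $L' \sim L$ becomes a minimum over $K' \sim K$ in $\es$.

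The one genuine point to check is that, under this correspondence, $x(\PD[\mu_0'])$ in $H^2(S^3, L')$ equals (or is bounded above by) $\geomi(K')$, the geometric winding number of $K' \subset \es$. I would argue as follows. The meridian $\mu_0'$ of $\bigcirc'$ becomes, after $0$-surgery on $\bigcirc'$, the core circle of the surgery solid torus, and a Seifert surface for $\bigcirc'$ in $S^3$ caps off to a $2$-sphere $S$ generating $H_2(\es; \Z)$. The Thurston norm $x(\PD[\mu_0'])$ in the link exterior $S^3 \setminus \nu(\bigcirc' \cup K')$ is computed by surfaces Poincar\'e dual to $[\mu_0']$; capping off such a surface along $\bigcirc'$ with disks produces a generator of $H_2(\es; \Z)$ disjoint from the surgery core and meeting $K'$ in $|[S] \cdot [K']|$-many points, but we want the \emph{geometric} count. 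Conversely, starting from a $2$-sphere realising $\geomi(K')$ in $\es$, puncturing it at its intersections with $K'$ and removing a neighbourhood of $\bigcirc'$ gives a properly embedded surface in the link exterior, Poincar\'e dual to $[\mu_0']$ up to adding copies of the fibre/torus boundary components, whose Thurston-norm complexity is controlled by $\geomi(K')$ (the extra meridional boundary circles of $K'$ contribute to $x$ via $\max\{0, -\chi\}$, and each puncture drops $\chi$ by $1$). Carrying this out carefully should give $x(\PD[\mu_0']) \le \geomi(K') - 1$ when $\geomi(K') \ge 1$, perhaps with a small additive constant; since the statement only involves $\lceil \geomi(K')/4 \rceil$ versus $\lceil (x(\PD[\mu_0'])+1)/4 \rceil$, an inequality of the shape $x(\PD[\mu_0']) + 1 \le \geomi(K')$ is exactly what is needed to transport the bound.

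The main obstacle I anticipate is precisely this comparison between the Thurston norm of a relative class in the link exterior and the geometric winding number in $\es$: one must track the boundary behaviour on the $K'$-side (norm-minimising surfaces may acquire many meridional boundary components, and the definition of $x$ only penalises negative Euler characteristic) and make sure that capping off along the $\bigcirc'$-side does not decrease complexity in a way that breaks the inequality. I would handle this by choosing a norm-minimising surface for $\PD[\mu_0']$, discarding closed components and sphere/disk components (which do not affect $x$), and then arguing that each remaining component meets $K'$ enough times that the resulting punctured sphere in $\es$ has at least $x(\PD[\mu_0']) + 1$ intersection points with $K'$, using that $[S] \cdot [K'] = \lk(\bigcirc', K') = 0$ so the algebraic count is zero and all intersections come in pairs — this is where the $+1$ and the ceiling in~\eqref{e:geomlowerbound} earn their keep. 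Once this geometric lemma is in place, Theorem~\ref{t:main} is immediate from Theorem~\ref{t:mainthurston}.
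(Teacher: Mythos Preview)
Your overall strategy---deduce Theorem~\ref{t:main} from the Thurston-norm version via the inequality $\geomi(K') \ge x(\PD[\mu_0']) + 1$---is essentially the paper's. The paper packages that inequality as Lemma~\ref{lemma:gtildeforever} together with the tubing construction of Figure~\ref{fig:tubing}: a sphere meeting $K'$ in $\geomi(K')$ points, tubed along $K'$ and then opened at its single intersection with the surgery core, becomes a genus-$\geomi(K')/2$ Seifert surface for $\bigcirc$ in $S^3\setminus K'$, whence $(1+x(\PD[\mu_0']))/2 \le \geomi(K')/2$. Your punctured-planar-surface description is an equivalent route to the same bound.

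Two points need fixing, however. First, your claim that a strong concordance forces $\bigcirc'$ to be unknotted is simply false---it only forces $\bigcirc'$ to be slice---and Property~R says nothing of the sort. This breaks your attempt to match the two minima directly, and lifting a concordance from $\es\times I$ back to $S^3\times I$ is not as automatic as you suggest. The paper sidesteps this entirely: one observes that the right-hand side is already a concordance invariant of $K$ in $\es$ (a concordance $K'\sim K$ yields a homology cobordism $Y_{K'}\sim Y_K$, hence equal twisted correction terms), so it suffices to prove $\lceil\geomi(K')/4\rceil \ge \textrm{RHS}$ for each $K'$ individually, representing that particular $K'$ by a link $\bigcirc \cup K'$ with the \emph{fixed} unknot. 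No comparison of concordances in $S^3$ versus $\es$ is ever needed. Second, your final paragraph has the direction reversed: starting from a norm-minimising surface and extracting a sphere would bound $\geomi$ from \emph{above}, not below. The direction you actually need---and already sketched correctly in the preceding paragraph---starts from a sphere realising $\geomi(K')$ and produces a surface in the link exterior with $\chi_- \le \geomi(K')-1$.
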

Note that if the right-hand side of Equation~\eqref{e:geomlowerbound} is greater than $2$, then the knot is not concordant to the knotification of a $2$-component link.

In the case of essential knots in $\es$, we will obtain a similar bound (Theorem~\ref{t:geomi-essential}) on the geometric winding number, building on earlier work by Levine, Ruberman and Strle~\cite{levine2015nonorientable}.
An analogous result, expressed explicitly as a bound on the non-orientable Thurston norm, was discovered by Ni and Wu~\cite{ni2015correction}.

Previous bounds on the geometric winding number can be given in the topological locally flat category using an invariant defined by Schneiderman~\cite{schneiderman2003algebraic};
in fact, this observation was not made explicitly in his paper, so we provide a short proof below.
Then we compare our bound to his, and provide an infinite family of knots in $\es$ which are not concordant to knotified $2$-component links, and where the bound~\eqref{e:geomlowerbound} is stronger.

We note that one should not expect to obtain inequalities analogous to~\eqref{e:geomlowerboundthurstonconcordance} and \eqref{e:geomlowerbound} by means of the ordinary correction terms for rational homology spheres, at least not using arguments akin to the one we exploit in this paper.
Indeed, it appears that it is the lack of symmetry under orientation-reversal of $\dtw$ (or, indeed, of the bottom-most correction term $d_b$) that allows for the right-hand side of both inequalities to be non-trivial.
For ordinary correction terms, an analogue of the right-hand side of~\eqref{e:geomlowerboundthurstonconcordance} would be a multiple of $d(Z) + d(-Z)$, which vanishes since $d(Z) = -d(Z)$ for any integral homology sphere.
Twisted correction terms seem also hard to replace with the more classical bottom-most correction terms, as surfaces with trivial normal bundle naturally appear in this context, and their boundaries do not have standard $\HFoo$ (and in particular bottom-most correction terms are not defined).
This lack of symmetry was already exploited in a different context by Levine and Ruberman~\cite{LevineRuberman}.

Finally, in the appendix (with Adam Levine), we show that correction terms give a lower bound on the 0-shake-slice genus of a knot.
To this end, let $X_K$ denote the trace of the 0-surgery along $K$, i.e. the 4-manifold obtained from $B^4$ by gluing a 0-framed 2-handle along $K$.

\begin{defi}
The $0$-shake-slice genus $\gsh(K)$ of $K$ is the minimum of $g(F)$ as $F$ varies among all smoothly embedded surfaces representing a generator of $H_2(X_K)$.
\end{defi}

Piccirillo recently proved that the 0-shake-slice genus can be strictly less than the 4-ball genus~\cite{piccirillo}, which resolves a problem of Kirby~\cite[Problem 1.41]{kirbylist}.
Moreover, it was previously known that $d(S^3_1(K))$ bounds the 4-ball genus~\cite{Rasmussen-GodaTeragaito};
our theorem shows that correction terms bound the 0-shake-slice genus as well. 

\begin{thm}\label{t:shake-bound}
For any knot $K\subset S^3$, we have $\gsh(K) \ge \dtw(S^3_0(K)) - \frac12$.
\end{thm}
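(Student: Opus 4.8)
The plan is to reduce Theorem~\ref{t:shake-bound} to Theorem~\ref{t:main} (or its 2-component predecessor, Theorem~\ref{t:mainthurston}) by a standard "turning the handle around" trick. Given a knot $K\subset S^3$ and a smoothly embedded surface $F$ of genus $g=g(F)$ representing a generator of $H_2(X_K)$, I would first push the interior of $F$ into the 0-surgery trace $X_K$ and look at $X_K$ from the other side: $X_K$ is also obtained from $S^3_0(K)\times[0,1]$ by attaching a 2-handle, or equivalently $\partial X_K = S^3_0(K)$ and $F$ may be capped off. Concretely, the cocore of the 0-framed 2-handle, together with $F$, gives a closed surface $\widehat F$ in the closed-up manifold; alternatively, one tubes $F$ along the belt sphere to realize $F$ as a surface in $S^3_0(K)$ with one boundary component on a copy of the core circle. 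The key point is that a genus-$g$ such surface exhibits the dual knot in $S^3_0(K)$ — namely, the image of a meridian $\mu$ of $K$, which is isotopic to the core of the surgery solid torus — as a knot $J$ in $S^3_0(K)\cong \bigcirc_0 \cup K$'s surgery, with $\geomi$ or Thurston-norm data controlled by $2g$.

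More precisely, here is the concrete route I expect to take. Consider the 2-component link $\bigcirc\cup K\subset S^3$ with $\bigcirc$ an unknot meridional to $K$, so $\lk(\bigcirc,K)=0$; doing $0$-surgery on $\bigcirc$ turns $K$ into a knot $\kappa$ in $\es$ (this is exactly the setup before Theorem~\ref{t:main}). A genus-$g$ surface $F$ representing a generator of $H_2(X_K)$, after being capped off with the core disk of the surgery on $K$, becomes a closed surface in the closed 4-manifold obtained by also doing $0$-surgery on $K$ — and its restriction to the 3-manifold boundary gives a surface in $\es$ realizing the generator of $H_2(\es;\Z)$ with genus at most $g$ (the core disk contributes no genus, only the one transverse intersection point with the generating sphere). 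Hence $\geomi(\kappa)\le 2g+1$, or with a slightly more careful count $\lceil \geomi(\kappa)/4\rceil \le \lceil (2g+1)/4\rceil$. Now apply Theorem~\ref{t:main}: with $Y_{\kappa}=S^3_1(\kappa)$, but here $+1$-surgery on $\kappa\subset\es$ is diffeomorphic to $S^3_0(K)$ (surgery on $\kappa$ undoes the $0$-surgery on $\bigcirc$ and replaces the $0$-framing on $K$ appropriately) — this identification is the crux and needs to be checked carefully via a Kirby-calculus/handle-slide argument. Then the right-hand side of~\eqref{e:geomlowerbound} becomes $(\dtw(S^3_0(K)) + \dtw(-S^3_0(K)) + 1)/2$, and combining with the genus estimate yields $\gsh(K) = g \ge \dtw(S^3_0(K)) - 1/2$ after using $\dtw(-S^3_0(K))\ge 0$ or the precise normalization of $\dtw$ on $\es$-like manifolds (since $S^3_0(K)$ has $b_1=1$ and $\HFoo$ standard, $\dtw(-S^3_0(K))=0$ in the relevant spin$^c$ structure, which is what collapses the symmetric expression to a single term).

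The main obstacle I anticipate is the bookkeeping in the previous paragraph: getting the surgery coefficients and the identification $S^3_1(\kappa)\cong S^3_0(K)$ exactly right, and in particular pinning down the correct value of $\dtw$ for the "reversed-orientation" piece so that the factor-of-$2$ and the $+1$ in~\eqref{e:geomlowerbound} collapse to the clean constant $-1/2$ in Theorem~\ref{t:shake-bound}. One must also be careful that capping $F$ with the core disk of the $0$-surgery on $K$ is legitimate — i.e. that $\partial F$, which is the attaching circle pushed to the boundary, really is the dual knot and bounds the core disk — and that the genus and intersection-number inequalities are tight enough (no loss of more than the single unavoidable intersection point coming from the generator of $H_2$). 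A cleaner alternative, which I would pursue in parallel, is to bypass $\geomi$ entirely and instead quote Theorem~\ref{t:mainthurston} (equivalently its strengthening Theorem~\ref{t:geomlowerboundthurston}) directly: the surface $F$ of genus $g$ gives, after the turning-around, a surface in $S^3\setminus (\bigcirc\cup K)$ (or a concordant link $L'$) realizing the Thurston norm of $\PD[\mu']$ with $x(\PD[\mu_0'])\le 4g-1$ or similar, so that $\lceil (x+1)/4\rceil \le g$, and the right-hand side is again $(\dtw(S^3_0(K))+\dtw(-S^3_0(K))+1)/2$ with the reversed term vanishing — this avoids any discussion of $\es$ and reduces everything to handle calculus plus one invocation of the already-proved main theorem.
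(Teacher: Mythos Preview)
Your reduction to Theorem~\ref{t:main} (or~\ref{t:mainthurston}) does not go through, for three reasons that are more than bookkeeping.

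First, the link you describe is inconsistent: if $\bigcirc$ is a meridian of $K$, then $\lk(\bigcirc,K)=\pm1$, not $0$, so the hypotheses of Theorems~\ref{t:main} and~\ref{t:mainthurston} are not met; and if instead $\lk(\bigcirc,K)=0$ with $\bigcirc$ unknotted, then $(0,1)$-surgery on $\bigcirc\cup K$ is $\es\# S^3_1(K)$, not $S^3_0(K)$. There is no choice of two-component link with vanishing linking number and one unknotted component on which $(0,1)$-surgery canonically produces $S^3_0(K)$ for an arbitrary $K$.

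Second, and more fundamentally, the surface $F\subset X_K$ is \emph{closed} and lives genuinely in a $4$-manifold; there is nothing to ``cap off'', and there is no mechanism by which a genus-$g$ closed surface in $X_K$ yields a Seifert surface in a link complement or a bound on $\geomi$ of some knot in $\es$. The quantities on the left of~\eqref{e:geomlowerboundthurstonconcordance} and~\eqref{e:geomlowerbound} are $3$-dimensional (Thurston norm, geometric winding number), whereas $\gsh(K)$ is strictly $4$-dimensional---indeed, it can be smaller than the slice genus---so a reduction of this shape cannot work in general.

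Third, the claim that $\dtw(-S^3_0(K))=0$ is false: by Proposition~\ref{p:0surgery}, $\dtw(-S^3_0(K)) = \dtw(S^3_0(-K)) = -\tfrac12 + 2V_0(K)$, which equals $-\tfrac12$ for the unknot and $\tfrac32$ for the right-handed trefoil. So even if the reduction succeeded, the symmetric expression $\dtw(Y)+\dtw(-Y)+1$ would not collapse to $\dtw(Y)+\tfrac12$.

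The paper's proof is instead a direct application of Theorem~\ref{t:dtwcobordism}, parallel in spirit to the proof of Theorem~\ref{t:geomlowerboundthurston} but with $X_K$ replacing $Y\times I$. One sets $W = -(X_K\setminus N(F))$; since $F\cdot F=0$, $N(F)\cong F\times D^2$ and $W$ is a negative semi-definite cobordism from $Y_K=S^3_0(K)$ to $F\times S^1$. A short Mayer--Vietoris argument (the analogue of Lemma~\ref{l:injectiveH1}) shows that $H_1(Y_K)\to H_1(W)$ is injective, so Theorem~\ref{t:dtwcobordism} applies and, together with Proposition~\ref{p:dtwcomputations}, gives
\[
4\dtw(Y_K)+2 \le 4\dtw(F\times S^1)+2b_1(F\times S^1) = 8\left\lceil\tfrac{g}{2}\right\rceil.
\]
The asymmetry you were hoping to manufacture---only $\dtw(Y_K)$ appears, not $\dtw(-Y_K)$---comes for free from the fact that $X_K$ has a single boundary component, unlike $Y\times I$.
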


In fact, as we shall see in Proposition~\ref{p:0surgery} below, $\dtw(S^3_0(K)) - \frac12 = d(S^3_{-1}(K)) - 1$.

\subsection*{Organisation of the paper}
The paper is structured as follows.
We will give a few preliminary topological definitions in Section~\ref{sec:knotification}, while the relevant notions on Heegaard Floer homology with twisted coefficients will be recalled in Section~\ref{sec:hf}.
We will then give the proof of Theorem~\ref{t:geomlowerboundthurston} in Section~\ref{sec:dimostrazione}, and deduce Theorems~\ref{t:mainthurston} and~\ref{t:main} from it.
We prove a version of the bound~\eqref{e:geomlowerbound} for essential knots in Section~\ref{sec:essential}.
We explicitly compute the obstruction on a family of examples in Section~\ref{sec:esempi}, and compare our bounds to a bound derived from an invariant of topological concordance due to Schneiderman.
Finally, in the appendix we prove Theorem~\ref{t:shake-bound}.

\subsection*{Notation}
Unless explicitly stated, all manifolds will be smooth and oriented, and all submanifolds will be smoothly embedded;
(singular) homology and cohomology will be taken with integer coefficients.
In the Heegaard Floer context, we will work over the field $\F = \Z/2\Z$ with two elements.
The unknot will be denoted by $\bigcirc$, and if $L$ is a link, $\nu(L)$ will denote a regular tubular neighbourhood of $L$.

\subsection*{Acknowledgments}
We would like to thank Agnese Barbensi, Andr\'as Juh\'asz, Miriam Kuzbary, and Marc Lackenby for their comments and support. A special thanks to Marco Marengon, for his constant support and interest in the project.
We also thank Mark Powell and JungHwan Park for referring us to Schneiderman's work and sharing their expertise and ideas. We want to thank the authors of~\cite{friedl2016satellites} for pointing out a wrong citation in the first version of this paper.
Theorem~\ref{t:shake-bound} arose from a conversation during a conference at the Max Planck Institute in October 2016; we are grateful to MPI for fostering this collaboration.
The authors acknowledge support from the European Research Council (ERC) under the European Unions Horizon 2020 research and innovation programme (grant agreement No 674978). 
Finally, we thank the referee for their useful comments.

\section{Preliminaries}\label{sec:knotification}

Two knots $K_0,K_1$ in a closed, orientable and connected $3$-manifold $Y$ are said to be \emph{concordant} if there exists a smooth and properly embedded cylinder $A \cong S^1 \times [0,1] \subset Y \times [0,1]$, transverse to the boundary, and such that $A \cap (Y\times \{i\}) = K_i$ for $i =0,1$.
Concordance is an equivalence relation on the set of knots in $Y$; the equivalence classes under this relation can be given a group structure if $Y = S^3$, with the operation induced by connected sum.
Also, in this special case, we can equivalently say that two knots $K_0, K_1 \subset S^3$ are concordant if and only if $K_0 \# -K_1$ is \emph{slice}, i.e.~it is the boundary of a smooth and properly embedded disk in $\D^4$.
Here by $-K_1$ we mean the mirror of $K_1$, with its orientation reversed.
Correspondingly, we say that two links $L_0, L_1 \subset S^3$ (with the same number of components) are \emph{strongly concordant} if there exist two disjoint proper embeddings of $S^1 \times [0,1]$ in $S^3 \times [0,1]$ interpolating between the two links.
We write $L_0 \sim L_1$ if $L_0$ and $L_1$ are strongly concordant.

$L^n$ will always denote a $n$-component link in $S^3$.
A link $L^{n+1}$ naturally gives a knot in $\#^n\es$ through the Ozsv\'ath--Szab\'o \emph{knotification} construction, which we will briefly recall here. Choose $2n$ distinct points $\{p_i,q_i\}_{i=1,\ldots, n}$ on $L^{n+1}$, such that if we identify each $p_i$ with $q_i$ we obtain a connected graph.

These points will be the attaching loci of $n$ $1$-handles; inside each of them, attach an oriented band connecting the two corresponding points.
Denote by $\kappa(L)$ the resulting knot in $\#^n\es$.
Using isotopies and handleslides, it is not hard to show (see~\cite[Prop. 2.1]{ozsvath2004holomorphicknot}) that the diffeomorphism class of the knot $\kappa(L)$ does not depend upon the specific choice of the points, hence knotification is well defined.
It is immediate to note that a knotified link will be null-homologous in $\#^n\es$:
indeed, by construction it intersects the co-core of each 1-handle exactly twice, and the intersections have opposite signs;
since the co-cores generate $H_2(\#^n\es;\Z)$, $\kappa(L)$ is null-homologous.

\begin{defi}
Given a knot $K \subset \#^n\es$, denote by $\mathcal{S}_n$ any set of $n$ embedded and pairwise disjoint 2-spheres $\{S_1, \ldots, S_n\}$, generating $H_2(\#^n\es;\Z) \cong \Z^n$. Then define the \emph{geometric winding number} of $K$ as 
\[
\geomi(K) = \min_{\mathcal{S}_n} \sum_{i=1}^n |K \cap S_i|.
\]
\end{defi}

Clearly, the knotification $\knoti{L} \subset \es$ of a $2$-component link $L$ will necessarily have $\geomi(\knoti{L}) \le 2$, and more generally $\geomi(\knoti{L^{n}}) \le 2(n-1)$.
Given $K \subset \es$, the connected sum with a local knot $K^\prime \subset S^3$ does not alter the geometric winding number. 
Hence any concordance bound obtained on $\geomi(K)$ is in fact an almost-concordance bound, using the terminology of \cite{celoria2018concordances}.

Recall that the Thuston norm is a seminorm $x$ on $H_2(Y,\partial Y; \R)$ for a compact $3$-manifold $Y$, introduced in~\cite{thurston1986norm}; the value of $x$ on a class $h \in H_2(Y,\partial Y; \Z)$ is given by $\displaystyle x(h) = \min \chi_-(S)$, where $S$ ranges over all properly embedded surfaces representing $h$, and $\chi_- (S) = \max\{-\chi(S),0\}$.

We will be mostly dealing with $2$-component non-split links $L\subset S^3$; in this case, by excision, we can view the Thurston norm as a map $x: H_2(S^3, L ;\R) \to \R$.
Moreover, since $H_2 (S^3, L;\Z)  \cong  H^1 (S^3\setminus L;\Z) \simeq H_1(S^3\setminus L;\Z)$, we will sometimes be sloppy and denote by $[h]$ both the classes in $H^1$ or $H_1$ (identified by the universal coefficients theorem using the basis of $H_1(S^3\setminus L)$ given by the meridians), and reserve the notation $\PD[h]$ for the Poincar\'e--Lefschetz dual of $[h]\in H^1(S^3\setminus L)$ in  $H_2(S^3,L)$.
With this convention, $H_2(S^3,L;\R)\cong \R^2$, with the basis given by the Poincar\'e--Lefschetz duals of the meridians of the components.

Another seminorm that can be considered on $H^1 (S^3 \setminus L)$ is given by link Floer homology, introduced by Ozsv\'ath and Szab\'o in~\cite{ozsvath2008holomorphic}, and recalled below. In its most basic form, link Floer homology is an homological invariant of links $L \subset S^3$, which categorifies the multivariable Alexander polynomial. 

If $L = K_0 \cup K_1$ and $\lk(K_0,K_1) = 0$, then the corresponding groups split according to a $\Z^2$-grading induced by elements of $\mathbb{H} = H_1(S^3\setminus L ; \Z)$:
\[
\HFL(L) = \bigoplus_{h \in \mathbb{H}} \HFL(L,h).
\]
To get a seminorm $y$ on $\mathbb{H}$, given $h^\prime \in H^1(S^3 \setminus L)$ define
\[
y(h^\prime) = \max_{\{h \in \mathbb{H} \mid \HFL(L,h) \neq 0\}} |\langle h^\prime,h\rangle|.
\]

These two seminorms are closely related; in fact~\cite[Theorem 1.1]{ozsvath2008link}, states that the Thurston polytope (and thus, the entire seminorm) of a link is determined by the link Floer polytope on $H^1(S^3 \setminus L; \R)$. More precisely:
\begin{equation}\label{e:relxey}
x(\PD[h]) + |\langle \mu_0,h\rangle| +|\langle \mu_1,h\rangle| = 2y(h),
\end{equation}
where $\mu_i$ is the meridian of $K_i$ for $i= 0,1$.\\

We can express the quantity $x(\PD[\mu_0])$ in more familiar terms.
\begin{lemma}\label{lemma:gtildeforever}
Let $L = K_0 \cup K_1\subset S^3$ be a link as above. Then
\begin{equation}\label{e:gtildeex}
\frac{1 + x(\PD[\mu_0])}{2} = y(\mu_0) = \min\{g(\Sigma)\;|\; \Sigma \hookrightarrow S^3 \setminus K_1 , \, \partial \Sigma = K_0\}.
\end{equation} 
\end{lemma}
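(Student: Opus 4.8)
The plan is to read off the first equality directly from the Ozsv\'ath--Szab\'o formula~\eqref{e:relxey}, and to prove the second one by a standard cut-and-paste argument in the link exterior $S^3\setminus\nu(L)$. For the equality $\tfrac12\bigl(1+x(\PD[\mu_0])\bigr)=y(\mu_0)$, substitute $h=\mu_0$ into~\eqref{e:relxey}: with respect to the dual bases fixed above, $\langle\mu_0,\mu_0\rangle=1$ and $\langle\mu_1,\mu_0\rangle=0$, so~\eqref{e:relxey} reads $x(\PD[\mu_0])+1=2\,y(\mu_0)$. It then suffices to identify the common value $\tfrac12\bigl(1+x(\PD[\mu_0])\bigr)$ with $\widetilde g:=\min\{g(\Sigma)\mid\Sigma\hookrightarrow S^3\setminus K_1,\ \partial\Sigma=K_0\}$, i.e.\ to prove the topological identity $x(\PD[\mu_0])=2\widetilde g-1$. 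Note that such surfaces $\Sigma$ exist, since $\lk(K_0,K_1)=0$ makes $K_0$ null-homologous in $S^3\setminus K_1$; and that $\widetilde g\ge 1$, because if $K_0$ bounded a disk in $S^3\setminus K_1$ then the boundary of a regular neighbourhood of that disk, suitably isotoped, would be a splitting sphere for $L$, against the standing assumption that $L$ is non-split.

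The inequality $x(\PD[\mu_0])\le 2\widetilde g-1$ is immediate: a connected genus-$\widetilde g$ surface $\Sigma$ bounding $K_0$ and disjoint from $K_1$ intersects the exterior in a properly embedded surface $\Sigma'$ whose only boundary is one longitude on $\partial\nu(K_0)$, so $\Sigma'$ meets $\mu_0$ once and $\mu_1$ not at all and therefore represents $\PD[\mu_0]$; since $\widetilde g\ge 1$ this gives $x(\PD[\mu_0])\le\chi_-(\Sigma')=-\chi(\Sigma)=2\widetilde g-1$.

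For the reverse inequality I would start from a properly embedded $S\subset S^3\setminus\nu(L)$ with $[S]=\PD[\mu_0]$ and $\chi_-(S)=x(\PD[\mu_0])$, and normalise it without increasing $\chi_-$. First discard closed components. The intersection numbers $[S]\cdot\mu_1=[S]\cdot\lambda_1=0$ and $[S]\cdot\mu_0=1$, $[S]\cdot\lambda_0=0$ — where the $0$-framed longitudes $\lambda_i$ are null-homologous in the exterior, because $\lk(K_0,K_1)=0$ — force $\partial S$ to be null-homologous on $\partial\nu(K_1)$ and homologous to a single longitude on $\partial\nu(K_0)$. Now repeatedly cap off $S$ along innermost boundary-parallel disks in $\partial\nu(L)$ and tube together adjacent, oppositely-oriented, parallel boundary curves, discarding any closed components that appear, until $\partial S$ consists of exactly one longitude $\lambda_0\subset\partial\nu(K_0)$ and nothing on $\partial\nu(K_1)$. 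The surface is now connected; capping it with the radial annulus $\lambda_0\times[0,1]\subset\nu(K_0)$ yields a surface of some genus $g$ bounding $K_0$ in $S^3\setminus K_1$ with $1-2g=\chi(S)\ge-\chi_-(S)=-x(\PD[\mu_0])$. Hence $\widetilde g\le g\le\tfrac12\bigl(1+x(\PD[\mu_0])\bigr)$, and combining with the previous paragraph gives $x(\PD[\mu_0])=2\widetilde g-1$, as wanted.

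The only delicate point is the normalisation step. One must check that each elementary move preserves the relative class $\PD[\mu_0]$: this holds because the old and new surfaces agree away from a collar of $\partial\nu(L)$, and the closed surface obtained by gluing them along their common part bounds a product region there, hence is null-homologous. One must also check that no move raises $\chi_-$, which is clear: capping a boundary circle with a disk raises $\chi$ by $1$, and tubing two boundary circles with an annulus leaves $\chi$ unchanged. Everything else is bookkeeping with the Lefschetz-duality isomorphism $H_2(S^3\setminus\nu(L),\partial)\cong\operatorname{Hom}(H_1(S^3\setminus\nu(L)),\Z)$ and with Euler characteristics of surfaces with boundary.
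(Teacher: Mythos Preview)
Your proof is correct and follows essentially the same approach as the paper's: both deduce the first equality directly from~\eqref{e:relxey}, and for the second establish the double inequality by (i) using a minimal-genus Seifert surface for $K_0$ in $S^3\setminus K_1$ to bound $x(\PD[\mu_0])$ from above, and (ii) normalising a Thurston-norm-minimising surface by capping inessential boundary curves and tubing parallel oppositely-oriented ones, then discarding closed components, to produce a connected surface with a single longitudinal boundary on $\partial\nu(K_0)$. Your write-up is in places more careful than the paper's (e.g.\ the explicit use of non-splitness to ensure $\widetilde g\ge 1$, and the intersection-number analysis of $\partial S$); one small remark is that your justification ``agree away from a collar'' does not literally cover the step of discarding closed components created by tubing, but this step is harmless anyway since the map $H_2(S^3\setminus\nu(L))\to H_2(S^3\setminus\nu(L),\partial)$ is zero.
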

\begin{proof}
The first equality follows immediately from Equation~\eqref{e:relxey}, so we are going to prove double inequalities between the first and last elements. For convenience, denote the term on the right in Equation~\eqref{e:gtildeex} by $\widetilde{g}(K_0,K_1)$.

Consider an embedded $\Sigma$ minimising $\widetilde{g}(K_0,K_1)$ (so cobounding $K_0$ in the exterior of $K_1$); clearly $[\Sigma] = \PD[\mu_0]$, hence $\widetilde{g}(K_0,K_1) = g(\Sigma) = \frac{1-\chi(\Sigma)}2 \ge \frac{1 + x(\PD[\mu_0])}{2}$. 

For the other direction, consider a surface $S$ realising $x(\PD[\mu_0])$; $S$ might be disconnected, and have multiple boundary components, which are simple and disjoint closed curves on $\partial S^3 \setminus \nu(L)$. Since the two components satisfy $\lk(K_0,K_1) = 0$, up to isotopy and attachments of annuli along boundary components, we can assume that these curves are a Seifert longitude on $\partial \nu(K_0)$ and a collection of meridians on $\partial \nu(K_1)$.

The signed count of these meridians needs to be $0$, in view of the condition $\lk(K_0,K_1)=0$.
By attaching other annuli connecting meridians with opposite signs (these annuli might be nested) we get a properly embedded surface $S'$.
By adding (possibly nested) annuli in near $\partial \nu(K_0)$ we can further assume that $\partial S'$ is connected, i.e. it is a Seifert longitude of $K_0$.

Note that adding annuli does not change the Thurston norm, so that $x(S') = x(S) = x(PD[\mu_0])$.
Moreover, $S'$ cannot have closed components that are not spheres or tori, since otherwise discarding them would decrease $x(S')$;
we discard all spheres and tori in $S'$, so that $S'$ is now connected.

The genus of $S'$ is precisely $\frac{1 + x(\PD[\mu_0])}{2}$.
\end{proof}

As an aside, recall that the \emph{concordance genus} of a knot $K \subset S^3$ is the minimal Seifert genus among all representatives in the concordance class of $K$. The left-hand side of Equation~\eqref{e:geomlowerboundthurstonconcordance} is an analogue of the concordance genus for $2$-component links.

\section{Heegaard Floer homology}\label{sec:hf}
Let $(Y,\ft)$ be a \spinc closed and orientable $3$-manifold, such that $c_1(\ft)$ is a torsion element in $H^2(Y;\Z)$; such a pair will be called a \emph{torsion} \spinc $3$-manifold.
We will only work with torsion \spinc 3-manifold in the paper, so $\Yt$ will always denote a torsion \spinc 3-manifold, unless explicitly stated otherwise.

To $\Yt$, Oszv\'ath and Szab\'o associate two $\Q$- and $\Z/2\Z$-graded $\F[U]$-modules, $\HF\Yt$ and $\tHF\Yt$~\cite{ozsvath2004holomorphic, ozsvath2004holomorphicpropr}.
The latter denotes (the plus flavour of) Heegaard Floer homology of $\Yt$ with fully twisted coefficients, while the former denotes (the plus flavour of) Heegaard Floer homology of $\Yt$.
When $Y$ is a rational homology sphere, there is no twisting, and $\HF\Yt = \tHF\Yt$.

We write $\HFe\Yt$ and $\HFo\Yt$ for the parts of $\Z/2\Z$-degree 0 and 1 respectively.
We also write $\HF(Y)$ as a shorthand for $\bigoplus_{\ft\in\Spinc(Y)} \HF\Yt$, and $\tHF(Y)$ as a shorthand for $\bigoplus_{\ft\in\Spinc(Y)} \tHF\Yt$ (here we are summing over all \spinc structures, not just the torsion ones).
Moreover, there is a well-defined quotient $\HFred\Yt$ of $\HF\Yt$, called the \emph{reduced} part of $\HF\Yt$;
if a rational homology sphere $Y$ has $\HFred\Yt = 0$ for each \spinc structure $\ft$, we say that it is an L-space.

A \spinc cobordism $(W,\fs)$ from $\Yt$ to $(Y',\ft')$ induces a map $F_{W,\fs}:\HF\Yt\to\HF(Y',\ft')$;
there is an analogue for the fully twisted version as well.
To the underlying smooth cobordism $W$ we can associate $F_W: \HF(Y) \to \HF(Y')$, by summing over all \spinc structures on $W$.

Let $M$ be a compact 3-dimensional manifold with torus boundary; consider three slopes $s_0, s_1, s_\infty$ on $\partial M$ such that $s_0 \cdot s_1 = s_1 \cdot s_\infty = s_\infty\cdot s_0 = -1$;
the three 3-manifolds $Y_0, Y_1, Y_\infty$ obtained by filling $M$ with slopes $s_0, s_1, s_\infty$ respectively, are said to form a \emph{triad}.
The key example of triad is when $M$ is the complement of a null-homologous knot $K$ in a 3-manifold, $s_\infty$ is the slope of the meridian of $K$, and $s_0$ and $s_1 = s_0 + s_\infty$ are consecutive integral slopes.
A triad gives rise to three cobordisms $W_\infty, W_0, W_1$, where $W_\infty: Y_0 \leadsto Y_1$ is obtained from $Y_0$ by attaching a single 2-handle ($W_0$ and $W_1$ are defined analogously, cyclically permuting the indices).
The associated maps fit into an exact triangle:
\[
\xymatrix{
\HF(Y_1) \ar[rr]^{F_{W_0}} & & \HF(Y_\infty)\ar[dl]^{F_{W_1}}\\
 & \HF(Y_0)\ar[ul]^{F_{W_\infty}}
}
\]
If $b_1(Y_0) = 1$ and $b_1(Y_1) = b_1(Y_\infty) = 0$, there is also a twisted coefficient version of the triangle above:
\[
\xymatrix{
\HF(Y_1)[t,t\inv] \ar[rr]^{\underline{F}_{W_0}} & & \HF(Y_\infty)[t,t\inv]\ar[dl]^{\underline{F}_{W_1}}\\
 & \tHF(Y_0)\ar[ul]^{\underline{F}_{W_\infty}}
}
\]
in which the maps $\underline{F}_{W_0}, \underline{F}_{W_1}, \underline{F}_{W_\infty}$ are a suitably adapted version of the maps $F_{W_0}, F_{W_1}, F_{W_\infty}$, and $\HF(Y)[t,t\inv]$ is a shorthand for $\HF(Y)\otimes_\F\F[t,t\inv]$.

When $\Yt$ is a rational homology sphere, from $\HF\Yt$ Ozsv\'ath and Szab\'o extract a numerical invariant $d\Yt$, the \emph{correction term} of $\Yt$~\cite{OSz-absolutely};
this was also extended to 3-manifolds `with standard $\HFoo$', to define the \emph{bottom-most correction term} $d_b\Yt$; e.g.~if $b_1(Y) \le 2$, $Y$ automatically has standard $\HFoo$~\cite[Theorem 10.1]{ozsvath2004holomorphicpropr}.

Stefan Behrens and the second author generalised this construction using twisted coefficients~\cite{behrens2015heegaard}; from $\tHF\Yt$ one can then define the \emph{twisted correction term} $\dtw\Yt$.
This is a rational number associated to $\Yt$;
it is invariant under \spinc rational homology cobordism, and additive under connected sums; that is, $\dtw(Y\#Y', \ft\#\ft') = \dtw(Y,\ft)+\dtw(Y',\ft')$.
Moreover, it agrees with the usual untwisted version for rational homology spheres.

We will be using the following additional property of twisted correction terms.
\begin{thm}[{\cite[Proposition 4.1]{behrens2015heegaard}}]\label{t:dtwcobordism}
Let $\Yt$, $(Y',\ft')$ be torsion \spinc $3$-manifolds, and $(W,\fs)$
a negative semi-definite \spinc cobordism from $\Yt$ to $(Y',\ft')$.
Assume moreover that the inclusion $Y\hookrightarrow W$ induces an injection $H_1(Y;\Q)\to H_1(W;\Q)$.
Then
\[
c_1(\fs)^2 + b_2^-(W) + 4\dtw\Yt + 2b_1(Y) \le 4\dtw(Y',\ft') + 2b_1(Y').
\]
\end{thm}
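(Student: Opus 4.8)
The plan is to transplant Ozsv\'ath--Szab\'o's proof that a negative semi-definite cobordism between rational homology spheres bounds the difference of correction terms into the fully twisted setting, keeping careful track of the first Betti numbers.

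First I would arrange for everything to be defined over a single ring. Since $c_1(\fs)$ is torsion, so are $c_1(\ft)$ and $c_1(\ft')$, and the hypothesis that $H_1(Y;\Q)\to H_1(W;\Q)$ is injective says dually that $H^1(W;\Q)\to H^1(Y;\Q)$ is onto; this lets me extend a generic homomorphism $H^1(Y;\Z)\to\R$ to one on $H^1(W;\Z)$ and pass everywhere to the associated Novikov field $\Lambda$. Over $\Lambda$ the twisted groups $\underline{HF}^\infty$ of $\Yt$ and of $(Y',\ft')$ are free of rank one over $\Lambda[U,U^{-1}]$ and concentrated in a single $\Z/2$-degree --- this is exactly the feature of fully twisted coefficients that makes $\dtw$ behave like $d$ --- and $\dtw$ is the minimal $\Q$-grading of a nonzero element of $\underline{HF}^+\otimes\Lambda$ in the image of the natural map from $\underline{HF}^\infty\otimes\Lambda$. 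The cobordism $(W,\fs)$ then induces over $\Lambda$ a map $\underline{F}^\infty_{W,\fs}$ between the two towers, fitting into a commuting square with the corresponding $\underline{HF}^+$-map and homogeneous of the usual degree $\tfrac14\bigl(c_1(\fs)^2-2\chi(W)-3\sigma(W)\bigr)$, since the degree shift does not depend on coefficients.

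The crux, and what I expect to be the main obstacle, is to show that $\underline{F}^\infty_{W,\fs}$ is nonzero; being a grading-homogeneous map of rank-one free $\Lambda[U,U^{-1}]$-modules over a field, nonzero forces it to be an isomorphism. I would prove this by decomposing $W$ relative to $Y$ into $1$-, $2$- and $3$-handle cobordisms (cancelling $0$- and $4$-handles). A $1$-handle realises connected sum with $\es$, whose fully twisted $\underline{HF}^\infty$ over $\Lambda$ is a single tower, so the $1$-handle map is an isomorphism; for a $2$-handle the isomorphism follows, as in the untwisted case, from the twisted surgery exact triangle and $b_2^+(W)=0$; the delicate case is a $3$-handle, where the map on $\underline{HF}^\infty$ could a priori vanish --- this is precisely the configuration in which a rational class of $H_1(Y)$ would die in $W$, and it is excluded by the injectivity hypothesis. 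Composing through the decomposition gives that $\underline{F}^\infty_{W,\fs}$ is an isomorphism.

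Granting this, I would conclude as in the rational homology sphere case. Choose $\xi\in\underline{HF}^+\Yt\otimes\Lambda$ of grading $\dtw\Yt$ lying in the image of $\underline{HF}^\infty\Yt\otimes\Lambda$. Since $\underline{F}^\infty_{W,\fs}$ is an isomorphism of towers, $\underline{F}^+_{W,\fs}(\xi)$ is again nonzero and lies in the image of $\underline{HF}^\infty(Y',\ft')\otimes\Lambda$, hence has grading at least $\dtw(Y',\ft')$; comparing with the degree shift gives $\dtw(Y',\ft')\ge\dtw\Yt+\tfrac14\bigl(c_1(\fs)^2-2\chi(W)-3\sigma(W)\bigr)$. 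It then remains to translate this into the stated form: with $b_2^+(W)=0$ one has $\sigma(W)=-b_2^-(W)$, and a computation of $\chi(W)$ for the cobordism --- using $b_1(W)\ge b_1(Y)$, again from the injectivity hypothesis, together with the standard control of the nullity and of $b_3(W)$ in terms of the boundary, to absorb the error terms --- rearranges the inequality into $c_1(\fs)^2+b_2^-(W)+4\dtw\Yt+2b_1(Y)\le 4\dtw(Y',\ft')+2b_1(Y')$, the terms $2b_1(Y)$ and $2b_1(Y')$ coming entirely from this Euler-characteristic bookkeeping. The homological and analytic content is all in the surjectivity of $\underline{F}^\infty_{W,\fs}$, and in particular in handling the $3$-handles under the stated hypothesis; the remainder is algebra.
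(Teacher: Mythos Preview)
The paper does not contain a proof of this statement: it is quoted verbatim as \cite[Proposition~4.1]{behrens2015heegaard} and used throughout as a black box, so there is nothing in the present paper to compare your argument against.

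For what it is worth, your outline is essentially the argument given in \cite{behrens2015heegaard}: pass to a field of coefficients over which $\underline{HF}^\infty$ is a single tower, show that the cobordism map on $\underline{HF}^\infty$ is an isomorphism by decomposing $W$ into handles (the $H_1$-injectivity hypothesis entering at the $3$-handle stage), and read off the inequality from the degree-shift formula. Two places in your sketch deserve more care. First, the injectivity hypothesis does not forbid $3$-handles outright; rather it guarantees that any $3$-handle kills only a class born from a $1$-handle, not one coming from $H_1(Y)$, and one has to organise the decomposition to exploit this. Second, the final ``Euler-characteristic bookkeeping'' is not a throwaway step: to land exactly on the stated inequality with the $2b_1(Y)$ and $2b_1(Y')$ terms one must track precisely how $b_1(W)$, $b_3(W)$, and the nullity of the intersection form relate to $b_1(Y)$ and $b_1(Y')$ via the long exact sequence of $(W,\partial W)$ and the injectivity hypothesis, rather than just bounding them. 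Both points are handled in \cite{behrens2015heegaard}.
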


We will also use the following computations;
we will omit the \spinc structure from the notation when there is a unique torsion \spinc structure, i.e.~when $H_1$ of the 3-manifold is torsion-free.

\begin{prop}[{\cite[Theorem 6.1]{behrens2015heegaard}}]\label{p:dtwcomputations}
If $\Sigma$ is a closed orientable surface of genus $g$, then $\dtw(\Sigma\times S^1) = (-1)^{g+1}/2$; in particular,
\[
4\dtw(\Sigma\times S^1) + 2b_1(\Sigma\times S^1) = 8\left\lceil \frac g2\right\rceil.
\]
\end{prop}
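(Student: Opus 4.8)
The second assertion is pure arithmetic: since $b_1(\Sigma\times S^1)=2g+1$, we have $4\dtw(\Sigma\times S^1)+2b_1(\Sigma\times S^1)=2(-1)^{g+1}+4g+2$, which equals $4g$ when $g$ is even and $4g+4$ when $g$ is odd --- in both cases $8\lceil g/2\rceil$. So the whole content is the equality $\dtw(\Sigma\times S^1)=(-1)^{g+1}/2$, and that is what I would prove.

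The plan is to identify the bottom $\Q$-grading of the $\mathcal{T}^+$-tower in $\tHF(\Sigma\times S^1,\ft_0)$ computed over the Novikov field, where $\ft_0$ is the unique \spinc structure with $c_1(\ft_0)=0$: by definition of $\dtw$ this grading \emph{is} $\dtw(\Sigma\times S^1)$, provided the twisted $\HFoo$ is standard. For the base cases this is known: $g=0$ gives $\es$, for which a direct computation gives $\tHF(\es,\ft_0)=\mathcal{T}^+$ with bottom in degree $-1/2$; and $g=1$ gives $T^3$, whose fully twisted Heegaard Floer homology was computed by Ozsv\'ath and Szab\'o to be $\mathcal{T}^+$ with bottom in degree $+1/2$ (plus $U$-torsion, which does not affect the correction term). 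Both agree with $(-1)^{g+1}/2$.

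For general $g$ I would reduce to a surgery computation. It is standard that $\Sigma\times S^1$ arises as the $0$-framed surgery on the genus-$g$ Borromean knot $B_g\subset\#^{2g}(\es)$ (for $g=1$ this is the $0$-surgery on the Borromean rings), and that $CFK^\infty(B_g)$ is the $g$-fold tensor power of $CFK^\infty(B_1)$ --- a complex with $2^{2g}$ generators, vanishing differential, and the evident $U$-action --- compatibly with the natural twisting by $H^1$. Feeding this into the fully twisted version of the Ozsv\'ath--Szab\'o integer surgery (mapping cone) formula, the surgery map $\bigoplus_s A_s\to\bigoplus_s B_s$ takes the form $v_s+T\cdot h_s$ for a formal parameter $T$; over the Novikov field this forces the twisted $\HFoo$ to be a single tower (so that $\dtw$ is defined at all), and computing the homology of the $(+)$-flavoured cone from this explicit complex one reads off that the bottom of the tower lies in degree $(-1)^{g+1}/2$. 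Alternatively one may simply invoke the computation of the fully twisted Heegaard Floer homology of $\Sigma\times S^1$ due to Jabuka and Mark and extract the bottom grading directly --- the parity phenomenon is already visible there.

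The hard part is this last step: carrying the absolute $\Q$-grading through the twisted mapping cone --- tracking the grading shifts of the $2^{2g}$ generators of $CFK^\infty(B_1)^{\otimes g}$ and of the $B_s$ summands, together with the framing-dependent normalisation --- and verifying that the twisted $\HFoo$ really is standard, so that $\dtw(\Sigma\times S^1)$ is even defined. Once the bottom grading is pinned down as $(-1)^{g+1}/2$, the ``in particular'' clause is the elementary computation above.
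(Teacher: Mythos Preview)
The paper does not prove this proposition at all: it is stated with a citation to \cite[Theorem~6.1]{behrens2015heegaard} and used as a black box. So there is no proof in the present paper to compare your proposal against.

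Your arithmetic for the ``in particular'' clause is correct. Your outline for the main equality is a reasonable plan and is, in spirit, close to how such computations are done (via the Borromean knot $B_g$ in $\#^{2g}(\es)$ and the twisted surgery/mapping-cone formalism, or via the Jabuka--Mark computation). But as you yourself flag, what you have written is a sketch rather than a proof: the actual content --- verifying that the fully twisted $\HFoo$ of $\Sigma\times S^1$ is standard in the torsion \spinc structure, and tracking the absolute $\Q$-grading through the twisted cone to land on $(-1)^{g+1}/2$ --- is precisely the work carried out in \cite{behrens2015heegaard}, and you have not reproduced it. If your aim is to give an independent proof here, you would need to fill in that grading computation; otherwise, citing the result (as the paper does) is the honest option.
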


We now give a way to index torsion \spinc structures on certain 3-manifolds, following~\cite[Section 2.4]{OSz-integralsurgeries}; we will abide by this labelling convention for the rest of the paper.

Suppose $Z$ is a closed 3-manifold with torsion-free $H_1(Z)$ (e.g. $Z = S^3$), and that $K\subset Z$ is a null-homologous knot.
Consider the 4-manifold $X_n(K)$ obtained by attaching a $2$-handle to $Z\times [0,1]$ along $K\times\{1\}$, with framing $n$;
for convenience, we let $Z_0 = Z\times\{0\}$.
An orientation of $K$ determines a generator $A$ of $H_2(X_n(K), Z_0)$;
the \spinc structure $\ft_i$ on $S^3_n(K)$ is the restriction of the unique \spinc structure $\fs_i$ on $X_n(K)$ such that $\langle c_1(\fs_i), A\rangle = n - 2i$ and $\fs_i|_{Z_0}$ is torsion.
(While, a priori, this construction depends on the choice of an orientation on $K$, this choice turns out to be immaterial;
this is because of conjugation symmetry in Heegaard Floer homology.)
Note that, when $Z$ is an integer homology sphere, the last condition is automatically satisfied.

Finally, we recall a way to compute correction terms of positive surgeries along knots (and especially along connected sums of torus knots).

\begin{thm}[\cite{rasmussen2003floer, NiWu}]\label{t:NiWu}
Fix a knot $K$ and a positive integer $n$. There is a non-increasing sequence of non-negative integers $\{V_i(K)\}_{i\ge 0}$ such that:
\[
d(S^3_n(K), \ft_i) = -2\max\{V_i(K), V_{n-i}(K)\} + \frac{(n-2i)^2}{4n} - \frac14.
\]
\end{thm}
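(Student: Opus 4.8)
The plan is to derive this from the knot Floer surgery formulae of Ozsv\'ath and Szab\'o. First I would recall the algebraic input. From the knot Floer complex $CFK^\infty(K)$ one builds, for each $s\in\Z$, the subquotient complexes $A^+_s = C\{\max(i,\,j-s)\ge 0\}$ and $B^+ = C\{i\ge 0\}\simeq CF^+(S^3)$, together with the vertical and horizontal projections $v^+_s,h^+_s\colon A^+_s\to B^+$. By the large surgery formula, $H_*(A^+_s)$ is, up to a grading shift, $\HF$ of a sufficiently positive surgery on $K$ in the relevant \spinc structure; in particular it has a single infinite tower $\mathcal{T}^+$, on which $v^+_s$ and $h^+_s$ act as $U^{V_s}$ and $U^{H_s}$ for uniquely determined integers $V_s=V_s(K)\ge 0$ and $H_s=H_s(K)\ge 0$. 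From the conjugation symmetry of knot Floer homology one gets $H_s=V_{-s}$, and from the structure of $CFK^\infty$ one gets that $s\mapsto V_s$ is non-increasing with consecutive values differing by at most $1$, and that $V_s=0$ for $s\ge g(K)$. These are the integers in the statement.

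The case of large surgeries, $n\ge 2g(K)-1$, is essentially immediate: for a suitable range of representatives the large surgery formula identifies $\HF(S^3_n(K),\ft_i)$ with $H_*(A^+_i)$ up to an explicit grading shift, and the bottom of its tower gives the formula. One checks that this matches the stated expression once the label $i$ is allowed to run over $0,\dots,n-1$, using conjugation symmetry ($\ft_i=\overline{\ft_{n-i}}$, hence $d(S^3_n(K),\ft_i)=d(S^3_n(K),\ft_{n-i})$) and the identity $H_s=V_{-s}$; this is where the $\max$ between $V_i$ and $V_{n-i}$ first appears.

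For arbitrary $n>0$ one invokes the integer surgery formula: $\HF(S^3_n(K))\cong H_*(\mathbb{X}^+_n)$, where $\mathbb{X}^+_n$ is the mapping cone of $D^+_n\colon\bigoplus_{s\in\Z}A^+_s\to\bigoplus_{s\in\Z}B^+_s$ with $D^+_n(a_s)=v^+_s(a_s)+h^+_s(a_s)\in B^+_s\oplus B^+_{s+n}$, the summand for $\ft_i$ being the one indexed by $s\equiv i\pmod n$. The point that makes this computable is that $v^+_s$ is a quasi-isomorphism once $V_s=0$ (so for $s\gg 0$) and $h^+_s$ is a quasi-isomorphism once $H_s=0$, i.e.\ once $V_{-s}=0$ (so for $s\ll 0$); cancelling these acyclic subcomplexes truncates the cone to a finite one. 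One then computes the bottom-most grading of the unique surviving tower. Keeping track of the quadratic grading shifts on the $B^+_s$ summands --- the shifts that make the cone recover the absolutely graded $\HF$, and which single out the indices $s=i$ and $s=i-n$ --- one finds that the obstruction to lowering the bottom of the surviving tower is governed by the ``innermost'' pair of maps, $v^+_i\simeq U^{V_i}$ and $h^+_{i-n}\simeq U^{H_{i-n}}=U^{V_{n-i}}$, which produces the term $-2\max\{V_i,V_{n-i}\}$ together with the shift $\tfrac{(n-2i)^2}{4n}-\tfrac14$.

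The main obstacle is precisely this last computation: executing the truncation of the infinite mapping cone and then pinning down the absolute grading of the bottom of the surviving tower --- in particular, verifying that $H_*(\mathbb{X}^+_n,\ft_i)$ really does contain a single tower whose bottom is unaffected by the reduced summands, and that the quadratic shifts assemble to exactly $\tfrac{(n-2i)^2}{4n}-\tfrac14$. This bookkeeping --- the local $h$-invariant computation of Rasmussen and its streamlining by Ni and Wu --- is the technical heart of the argument; the surrounding structure is formal.
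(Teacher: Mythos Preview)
The paper does not prove this theorem; it is quoted from the literature (Rasmussen's thesis and Ni--Wu) and used as a black box. Your sketch outlines the standard argument from those references---the integer surgery mapping cone, truncation via eventual acyclicity of $v^+_s$ and $h^+_s$, and the grading bookkeeping identifying the bottom of the surviving tower---so there is nothing to compare: your approach \emph{is} the cited proof.
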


In the notation of the latest theorem, we have the following.

\begin{prop}[{\cite[Example 3.9]{behrens2015heegaard}}]\label{p:0surgery}
Let $K$ be a knot in $S^3$; then $\dtw(S^3_0(K)) = d_b(S^3_0(K)) = -\frac12+2V_0(-K)$.
\end{prop}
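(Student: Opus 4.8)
\textbf{Proof proposal for Proposition~\ref{p:0surgery}.}

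The plan is to identify $\dtw(S^3_0(K))$ with $d_b(S^3_0(K))$ first, and then compute the latter via the surgery exact triangle relating $\pm 1$-surgery and $0$-surgery on $K$. The equality $\dtw(S^3_0(K)) = d_b(S^3_0(K))$ should follow from the general comparison between twisted and bottom-most correction terms established in~\cite{behrens2015heegaard}: since $b_1(S^3_0(K)) = 1 \le 2$, the manifold has standard $\HFoo$, so $d_b$ is defined, and one expects $\dtw$ and $d_b$ to agree whenever the twisting does not interact nontrivially with $\HFoo$, which is the case in this $b_1 = 1$ situation. I would cite the relevant statement from~\cite{behrens2015heegaard} rather than reprove it.

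For the actual value, the idea is to feed the triad $(S^3_1(K), S^3, S^3_0(K))$ — with $M$ the knot exterior, $s_\infty$ the meridian, $s_0$ and $s_1$ consecutive integral slopes — into the twisted surgery exact triangle recalled in Section~\ref{sec:hf}, where $b_1(S^3_0(K)) = 1$ and $b_1(S^3) = b_1(S^3_1(K)) = 0$. Tracking the $\F[U]$-module structure and the gradings through the triangle, together with the cobordism inequality of Theorem~\ref{t:dtwcobordism} applied to the two relevant (negative semi-definite) handle cobordisms, should pin down $\dtw(S^3_0(K))$ in terms of $d(S^3_1(K))$ — or, passing to mirrors and orientation reversal, in terms of $d(S^3_{-1}(K))$. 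Then I would invoke Theorem~\ref{t:NiWu} with $n = 1$: there the only \spinc structure is $\ft_0$, and $d(S^3_1(K)) = -2V_0(K) + \tfrac14 - \tfrac14 = -2V_0(K)$, so that $d(S^3_{-1}(K)) = -d(S^3_1(-K)) = 2V_0(-K)$. Combining this with the triangle computation gives $\dtw(S^3_0(K)) = -\tfrac12 + 2V_0(-K)$, and the parenthetical remark after Theorem~\ref{t:shake-bound} — that $\dtw(S^3_0(K)) - \tfrac12 = d(S^3_{-1}(K)) - 1$ — is then immediate from $d(S^3_{-1}(K)) = 2V_0(-K)$.

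The main obstacle I anticipate is the careful bookkeeping in the twisted exact triangle: one must correctly normalise the absolute $\Q$-grading on $\tHF(S^3_0(K))$, identify which summand of the triangle contributes the bottom-most tower, and confirm that the connecting maps shift degrees by the expected amounts (coming from the second Chern classes of $W_0$, $W_1$, $W_\infty$). The orientation-reversal step also requires care, since $\dtw$ is precisely \emph{not} symmetric under it; one has to make sure the mirror $-K$ enters on the correct side. None of these steps is deep — each is a standard manipulation in the Ozsv\'ath--Szab\'o framework — but assembling them without sign or grading errors is where the real work lies; fortunately, the computation is carried out in~\cite[Example 3.9]{behrens2015heegaard}, so I would present the argument as a summary of that computation with the key grading identifications made explicit.
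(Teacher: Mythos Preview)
The paper does not give a proof of this proposition at all: it is stated with a bracketed citation to \cite[Example~3.9]{behrens2015heegaard} and treated as a black box thereafter. Your proposal therefore goes well beyond what the paper does. As a sketch of what the cited computation actually involves, your outline is reasonable and in the right spirit---the twisted surgery triangle relating $S^3$, $S^3_0(K)$, and $S^3_{\pm 1}(K)$, together with the grading shifts of the cobordism maps, is exactly how \cite{behrens2015heegaard} carries out the computation---but none of it is needed here, since the paper simply quotes the result.
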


For positive torus knots, the sequence $\{V_i(T_{p,q})\}$ can be computed in terms of the arithmetics of $p$ and $q$ as follows (see~\cite[Equation (5.1)]{borodziklivingston}):
let $\Gamma_{p,q}$ be the semigroup generated by $p$ and $q$, i.e. $\Gamma_{p,q} = \{hp+kq \mid h,k \in \Z_{\ge 0}\}$;
then
\begin{equation}\label{e:semigroup}
V_i(T_{p,q}) = |\Gamma_{p,q} \cap [0,g(T_{p,q})-i)|, 
\end{equation}
where $g(T_{p,q}) = \frac{(p-1)(q-1)}2$ is the genus of $T_{p,q}$, and $|\cdot|$ denotes the cardinality of a set.

More generally, a similar computation works for connected sums of torus knots, of which we will only be using a special case; specifically, we claim that for every choice of positive integers $a$, $b$, and $n > 1$, $V_i(T_{n,an+1}\#T_{n,bn+1}) = V_i(T_{n,(a+b)n+1})$.
For completeness, we sketch the proof.

To any algebraic knot one can associate the \emph{multiplicity sequence}: in brief, this is a non-increasing sequence of positive integers that keeps track of how the knot is resolved by blowups.
The multiplicity sequence of $T_{n,kn+1}$ is of length $k$, and its entries are all $n$, i.e.~the sequence is $[n,\dots,n]$;
then, the concatenation of the multiplicity sequences of $T_{n,an+1}$ and $T_{n,bn+1}$ is the multiplicity sequence of $T_{n,(a+b)n+1}$;
in the notation of Bodn\'ar--Nem\'ethi~\cite[Theorem 5.1.3]{bodnar2016lattice}, this says $H_{T_{n,an+1}}\diamond H_{T_{n,bn+1}} = H_{T_{n,(a+b)n+1}}$;
since the function $H_K$ determines the sequence $\{V_i(K)\}_{i\ge 0}$, the claim is proved.

We state this explicitly when $a=b=2$.

\begin{lemma}\label{l:dT3737}
For each $m>0$ and $0 \le i < m$,
\[
d(S^3_m(T_{n,2n+1}\#T_{n,2n+1}),\ft_i) = d(S^3_m(T_{n,4n+1}),\ft_i).
\]
\end{lemma}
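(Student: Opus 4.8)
The plan is to deduce Lemma~\ref{l:dT3737} directly from Theorem~\ref{t:NiWu} together with the semigroup/multiplicity-sequence computation sketched just above the statement. By Theorem~\ref{t:NiWu}, for any knot $K$ and any $m > 0$ we have
\[
d(S^3_m(K),\ft_i) = -2\max\{V_i(K), V_{m-i}(K)\} + \frac{(m-2i)^2}{4m} - \frac14,
\]
so the surgery correction terms $d(S^3_m(K),\ft_i)$ depend on $K$ only through the sequence $\{V_j(K)\}_{j\ge 0}$. Hence it suffices to show that
\[
V_j(T_{n,2n+1}\#T_{n,2n+1}) = V_j(T_{n,4n+1}) \qquad \text{for all } j \ge 0.
\]
Once this equality of $V$-sequences is established, plugging $K_1 = T_{n,2n+1}\#T_{n,2n+1}$ and $K_2 = T_{n,4n+1}$ into the displayed formula for each $i$ with $0 \le i < m$ gives the claimed equality of correction terms termwise, since every ingredient other than the $V$'s (namely $\frac{(m-2i)^2}{4m}-\frac14$) is identical.

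The equality of $V$-sequences is exactly the case $a=b=2$ of the assertion proved in the paragraph preceding the lemma: for positive integers $a,b$ and $n>1$ one has $V_i(T_{n,an+1}\#T_{n,bn+1}) = V_i(T_{n,(a+b)n+1})$. I would simply invoke that computation. For completeness one can recall its two inputs. First, by Equation~\eqref{e:semigroup}, $V_i$ of a torus knot is read off from the semigroup $\Gamma_{p,q}$, and more generally for connected sums the sequence $\{V_i(K)\}$ is determined by the function $H_K$ of Bodn\'ar--Nem\'ethi, which is in turn determined by the multiplicity sequence. Second, the multiplicity sequence of $T_{n,kn+1}$ is $[n,\dots,n]$ of length $k$, and concatenation of multiplicity sequences corresponds to the operation $\diamond$ on the $H$-functions under connected sum; concatenating $[n,\dots,n]$ (length $a$) with $[n,\dots,n]$ (length $b$) yields $[n,\dots,n]$ of length $a+b$, which is the multiplicity sequence of $T_{n,(a+b)n+1}$. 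Specialising $a=b=2$ gives $V_i(T_{n,2n+1}\#T_{n,2n+1}) = V_i(T_{n,4n+1})$ for all $i$.

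There is essentially no obstacle here: the lemma is a bookkeeping corollary, and the only genuine mathematical content — the identity $H_{T_{n,an+1}}\diamond H_{T_{n,bn+1}} = H_{T_{n,(a+b)n+1}}$ — has already been argued in the surrounding text and may be quoted. The one point deserving a word of care is that Theorem~\ref{t:NiWu} is stated for knots in $S^3$ and its formula involves both $V_i$ and $V_{m-i}$; since $0 \le i < m$ forces $0 < m-i \le m$, both indices are legitimate and the substitution is valid for every relevant $i$. I would therefore write the proof in three lines: cite the $V$-sequence identity for the case $a=b=2$, observe that Theorem~\ref{t:NiWu} expresses $d(S^3_m(K),\ft_i)$ as a function of $\{V_j(K)\}_j$ alone, and conclude.
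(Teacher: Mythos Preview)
Your proposal is correct and matches the paper's approach exactly: the paper presents this lemma as the $a=b=2$ specialisation of the $V$-sequence identity proved in the preceding paragraph, and the reduction to $V$-sequences via Theorem~\ref{t:NiWu} is implicit there. There is nothing to add.
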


We conclude the section with another lemma that will be useful later on.

\begin{lemma}\label{l:V0}
The following equalities hold:
\begin{enumerate}\itemsep -1pt
\item[(I)$\,\,$] $V_0(T_{2n,2n+1}) = \frac12n(n+1)$;
\item[(II)$\,$] $V_0(T_{2n,8n+1}) = 2n^2$;
\item[(III)] $V_0(T_{2n+1,8n+5}) = 2n(n+1)$.
\end{enumerate}
\end{lemma}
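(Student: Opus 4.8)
The plan is to compute each $V_0$ using the semigroup formula~\eqref{e:semigroup}, namely $V_0(T_{p,q}) = |\Gamma_{p,q} \cap [0,g(T_{p,q}))|$, where $\Gamma_{p,q} = \{hp + kq : h,k \in \Z_{\ge 0}\}$ and $g(T_{p,q}) = \tfrac{(p-1)(q-1)}{2}$. In each of the three cases one of the generators is $\equiv 1 \pmod{}$ the other (indeed $2n+1 \equiv 1$, $8n+1 \equiv 1$ modulo $2n$, and $8n+5 \equiv 2n+3 \equiv 1$ modulo $2n+1$ after noting $8n+5 = 3(2n+1) + (2n+2)$; more cleanly, in case (III) write $q = 8n+5$ and $p = 2n+1$, and observe $q \equiv 2 \pmod{2n+1}$ — so I should be careful and instead organise the count directly). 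The cleanest approach is: an element of $\Gamma_{p,q}$ in $[0,g)$ is $hp + kq$ with $hp + kq < g$; grouping by the value of $k$ (the coefficient of the larger generator), for each fixed $k \ge 0$ with $kq < g$ the admissible $h$ range over $0 \le h \le \lfloor (g - 1 - kq)/p \rfloor$, contributing $\lfloor (g-1-kq)/p\rfloor + 1$ elements, and these counts are over disjoint sets because distinct $k$ give residues that are distinct modulo $p$ when $\gcd(p,q)=1$ and $k$ stays in a range of size less than $p$. So $V_0 = \sum_{k \ge 0,\, kq < g} \left( \lfloor (g-1-kq)/p \rfloor + 1 \right)$, and it remains to evaluate this finite sum in each case.

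For case (I), $p = 2n$, $q = 2n+1$, $g = \tfrac{(2n-1)(2n)}{2} = n(2n-1)$. Since $q = p+1$, we get $g - 1 - kq = n(2n-1) - 1 - k(2n+1)$, and $k$ ranges while $k(2n+1) < n(2n-1)$, i.e.\ roughly $0 \le k \le n-1$; for such $k$ one computes $\lfloor (g-1-kq)/p \rfloor + 1 = n - k$ (checking the floor: $g - 1 - kq = 2n(n-k) - (1 + k)$ and $0 < 1+k \le n \le 2n$, so the floor is $n-k-1$, plus $1$ gives $n-k$), hence $V_0 = \sum_{k=0}^{n-1}(n-k) = \tfrac{n(n+1)}{2}$, as claimed. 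For case (II), $p = 2n$, $q = 8n+1$, $g = \tfrac{(2n-1)(8n)}{2} = 4n(2n-1) = 8n^2 - 4n$; here $q \equiv 1 \pmod{2n}$, $k$ ranges over $0 \le k \le n-1$ (since $kq < g$ forces $k < \tfrac{8n^2-4n}{8n+1}$, which is between $n-1$ and $n$), and $g - 1 - kq = 8n^2 - 4n - 1 - k(8n+1) = 2n(4n - 2 - 4k) - (1+k)$; one checks $\lfloor (g-1-kq)/p\rfloor + 1 = 4n - 2 - 4k$ for $k \le n-1$, so $V_0 = \sum_{k=0}^{n-1}(4n-2-4k) = n(4n-2) - 4\cdot\tfrac{n(n-1)}{2} = 4n^2 - 2n - 2n^2 + 2n = 2n^2$. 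For case (III), $p = 2n+1$, $q = 8n+5$, $g = \tfrac{(2n)(8n+4)}{2} = n(8n+4) = 8n^2 + 4n$; note $q = 4p + 1$, so $g - 1 - kq = 8n^2 + 4n - 1 - k(8n+5)$, with $k$ in range $0 \le k \le n-1$; substituting and simplifying one finds $\lfloor(g-1-kq)/p\rfloor + 1 = 4n - 4k$, giving $V_0 = \sum_{k=0}^{n-1}(4n - 4k) = 4n \cdot n - 4\cdot\tfrac{n(n-1)}{2} = 4n^2 - 2n^2 + 2n = 2n^2 + 2n = 2n(n+1)$.

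The step I expect to require the most care is the evaluation of the floor functions $\lfloor (g - 1 - kq)/p \rfloor$ and, relatedly, the determination of the exact upper bound on $k$ — in particular checking that the top value of $k$ is included or excluded correctly, since an off-by-one there changes the final answer. In each case the key is to rewrite $g - 1 - kq$ as $p \cdot (\text{integer}) - (\text{small positive remainder})$ with remainder strictly between $0$ and $p$, which pins down the floor; I would verify this congruence bookkeeping explicitly for $k = 0$ and $k = n-1$ (the extreme cases) and note that it holds uniformly in between by linearity. An alternative, perhaps safer, route for a self-contained write-up is to use the symmetry $V_0(T_{p,q})$ equals the number of gaps of $\Gamma_{p,q}$ that are $\ge g$, equivalently (via the standard involution $x \mapsto 2g - 1 - x$ on $[0,2g-1]$ exchanging gaps and semigroup elements) $g$ minus the number of semigroup elements in $[0,g)$ — but since that just reproduces the same count, I would stick with the direct summation above and double-check small values ($n = 1$: case (I) gives $V_0(T_{2,3}) = 1$; case (II) gives $V_0(T_{2,9}) = 2$; case (III) gives $V_0(T_{3,13}) = 4$) against the semigroup definition as a sanity check.
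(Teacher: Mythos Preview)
Your approach is essentially identical to the paper's: both use the semigroup count $V_0(T_{p,q}) = |\Gamma_{p,q}\cap[0,g)|$, group by the coefficient $k$ of the larger generator, and evaluate the resulting sum $\sum_{k=0}^{n-1}\lceil (g-kq)/p\rceil$ (your $\lfloor(g-1-kq)/p\rfloor+1$ is the same expression). One small arithmetic slip in case~(I): the correct decomposition is $g-1-kq = 2n(n-k)-(n+1+k)$, not $2n(n-k)-(1+k)$; fortunately $0<n+1+k\le 2n$ for $0\le k\le n-1$, so your conclusion that the floor equals $n-k-1$ is still right.
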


\begin{proof}
We prove the first equality, and only sketch the proofs of the other two.
Since the genus of $T_{2n,2n+1}$ is $n(2n-1)$, from Equation \eqref{e:semigroup} above we know that we must count how many elements in the semigroup generated by $2n$ and $2n+1$ are strictly smaller than $n(2n-1)$.
For elements in $\Gamma_{p,q}$ that are less than $pq$, the representation $hp+kq$ is unique, therefore we only need to count pairs $(h,k)$ such that $2hn+k(2n+1) < n(2n-1)$.
One easily shows that this number is
\[
|\Gamma_{2n,2n+1} \cap [0,2n^2-n)| = \sum_{k=0}^{n-1}\left\lceil \frac{2n^2-n - k(2n+1)}{2n} \right\rceil = \sum_{k=0}^{n-1} (n-k) = \frac12 n(n+1).
\]

For points (II) and (III), the computation is very similar; in the first case, one has
\begin{align*}
|\Gamma_{2n,8n+1} \cap [0,8n^2-4n)| &= \sum_{k=0}^{n-1}\left\lceil \frac{8n^2-4n - k(8n+1)}{2n} \right\rceil \\
&= \sum_{k=0}^{n-1} 2(2n-1-2k) = 2n(n+1) - 2n = 2n^2.
\end{align*}

In the second case, one has
\begin{align*}
|\Gamma_{2n+1,8n+5} \cap [0,8n^2+4n)| &= \sum_{k=0}^{n-1}\left\lceil \frac{8n^2+4n - k(8n+5)}{2n+1} \right\rceil \\
&= \sum_{k=0}^{n-1} 4(n-k) = 2n(n+1).\qedhere
\end{align*}
\end{proof}

\section{Bounds on the Thurston norm}\label{sec:dimostrazione}

The goal of this section is to prove the following generalisation of Theorem~\ref{t:main}.
The setup is the following: $L = K_0 \cup K_1$ is a link with $\lk(K_0,K_1) = 0$ in $S^3$, and $Y_{L,n}$ is the 3-manifold obtained by doing $n$-surgery along $K_1$, and $0$-surgery on $K_0$; as before, recall that $\mu_0$ is the meridian of $K_0$. 

\begin{thm}\label{t:geomlowerboundthurston}
If $Y_{L,n}$ is obtained by doing $(0,n)$-surgery along $L$ as above, then:
\begin{equation}\label{e:geomlowerboundNthuston}
\min_{L^\prime \sim L } \left\lceil \frac{x(\PD[\mu_0^\prime]) +1}4\right\rceil  \ge 
\frac12\max_{\ft\in\Spinc(Y_{L,n})}\{\dtw(Y_{L,n},\ft) + \dtw(-Y_{L,n},\ft) + 1\}.
\end{equation}
\end{thm}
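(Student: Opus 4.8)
The plan is to combine three ingredients: concordance invariance of the right-hand side, the genus reformulation of the left-hand side from Lemma~\ref{lemma:gtildeforever}, and a cobordism argument built from the minimal-genus surface via Theorem~\ref{t:dtwcobordism} and Proposition~\ref{p:dtwcomputations}.

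First I would check that the right-hand side of~\eqref{e:geomlowerboundNthuston} is a strong concordance invariant of $L$. Given disjoint annuli in $S^3\times[0,1]$ realising $L\sim L'$, surgery on them with framings $0$ and $n$ (the framings are coherent along the annuli because $\lk(K_0,K_1)=0$) yields a homology cobordism from $Y_{L,n}$ to $Y_{L',n}$, and choosing a \spinc structure restricting to $\ft$ on both ends makes it a \spinc rational homology cobordism; since $\dtw$ is invariant under such cobordisms (and likewise for the reversed cobordism, between $-Y_{L,n}$ and $-Y_{L',n}$), $\max_\ft\{\dtw(Y_{L,n},\ft)+\dtw(-Y_{L,n},\ft)+1\}$ depends only on the strong concordance class. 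So it suffices to prove, for a single representative $L$, that $\lceil(x(\PD[\mu_0])+1)/4\rceil\ge\tfrac12\max_\ft\{\dtw(Y_{L,n},\ft)+\dtw(-Y_{L,n},\ft)+1\}$, and then take the minimum over $L'\sim L$. Next, by Lemma~\ref{lemma:gtildeforever}, $x(\PD[\mu_0])=2g-1$ where $g$ is the minimal genus of a surface $\Sigma$ with $\partial\Sigma=K_0$ embedded in $S^3\setminus K_1$, so $\lceil(x(\PD[\mu_0])+1)/4\rceil=\lceil g/2\rceil$; fixing such a $\Sigma$, capping $\partial\Sigma$ off with the meridian disk of the $0$-framed surgery solid torus on $K_0$ (and noting $\Sigma$ is disjoint from the $n$-surgery region on $K_1$) produces a closed genus-$g$ surface $\hat\Sigma\subset Y_{L,n}$ with trivial normal bundle whose class generates $H_2(Y_{L,n})\cong\Z$; mirroring, $-Y_{L,n}$ contains an analogous genus-$g$ surface $\hat\Sigma'$.

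The heart of the argument is the construction of a negative semi-definite \spinc cobordism $(W,\fs)$ from $Y_{L,n}\#(-Y_{L,n})$ to $\Sigma_g\times S^1$ with: $H_1(Y_{L,n}\#(-Y_{L,n});\Q)\hookrightarrow H_1(W;\Q)$; $\fs$ restricting to the torsion \spinc structure on $\Sigma_g\times S^1$ and to $\ft\#\ft$ on $Y_{L,n}\#(-Y_{L,n})$; and $c_1(\fs)^2+b_2^-(W)=0$. I would build $W$ from $(Y_{L,n}\#(-Y_{L,n}))\times[0,1]$ by attaching $2$-handles along $0$-framed curves that are null-homologous in $Y_{L,n}\#(-Y_{L,n})$ and pairwise unlinked — concretely, curves obtained by banding a symplectic-basis curve $a_i$ (resp.\ $b_i$) of $\hat\Sigma$ to its mirror $\bar a_i$ (resp.\ $\bar b_i$) on $\hat\Sigma'$, whose mutual linking numbers cancel precisely because passing to $-Y_{L,n}$ reverses orientation — thereby trading the total genus for $\Sigma_g$-factors, together with $1$- and $3$-handles correcting $b_1$ and trivialising the rational-homology-sphere part of $Y_{L,n}\#(-Y_{L,n})$. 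Since every $2$-handle is $0$-framed and the attaching curves are pairwise unlinked, the intersection form of $W$ is a sum of copies of $[0]$; thus $W$ is negative semi-definite, $b_2^-(W)=0$, and every \spinc structure on $W$ has $c_1^2=0$. Injectivity on $H_1$ holds because the $2$-handles are attached along null-homologous curves, and the \spinc bookkeeping (matching the restriction to $\ft$ on each summand and to the torsion class on $\Sigma_g\times S^1$) is read off from the framings. It is here that $-Y_{L,n}$ is genuinely needed: a half-basis of $\hat\Sigma$ cannot in general be made pairwise unlinked in $Y_{L,n}$ alone, and passing to the connected sum with $-Y_{L,n}$ is exactly what makes the linking matrix vanish.

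Granting $W$, I would finish by applying Theorem~\ref{t:dtwcobordism} to $(W,\fs)$ together with $b_1(Y_{L,n}\#(-Y_{L,n}))=2$, additivity of $\dtw$ under connected sum, and the identity $4\dtw(\Sigma_g\times S^1)+2b_1(\Sigma_g\times S^1)=8\lceil g/2\rceil$ from Proposition~\ref{p:dtwcomputations}, to get
\[
4\bigl(\dtw(Y_{L,n},\ft)+\dtw(-Y_{L,n},\ft)\bigr)+4\ \le\ 8\left\lceil\frac g2\right\rceil,
\]
i.e.\ $\tfrac12(\dtw(Y_{L,n},\ft)+\dtw(-Y_{L,n},\ft)+1)\le\lceil g/2\rceil=\lceil(x(\PD[\mu_0])+1)/4\rceil$; taking the maximum over $\ft$ (the left-hand side being independent of $\ft$) and then the minimum over $L'\sim L$ gives the claim. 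The hard part will be the construction of $W$ in the previous paragraph — realising the genus reduction of $\hat\Sigma\sqcup\hat\Sigma'$ by a system of $0$-framed $2$-handles with vanishing linking matrix, and checking that the top of the cobordism is $\Sigma_g\times S^1$ while keeping control of $b_1$, of the \spinc structure, and of the $H_1$-injectivity hypothesis; a minor point is that although the surgery trace $X_n(K_1)$ is negative semi-definite only for $n<0$, the argument uses only the surfaces $\hat\Sigma,\hat\Sigma'$ and not that trace, so it is insensitive to the sign of $n$.
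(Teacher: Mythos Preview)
Your overall strategy---concordance invariance of the right-hand side, the genus reformulation via Lemma~\ref{lemma:gtildeforever}, then Theorem~\ref{t:dtwcobordism} and Proposition~\ref{p:dtwcomputations} applied to a negative semi-definite cobordism from $Y\#(-Y)$ to $\Sigma_g\times S^1$---matches the paper, and your final chain of inequalities is exactly what the paper obtains. The gap is in the construction of $W$: you propose to build it from $(Y\#(-Y))\times[0,1]$ by attaching $0$-framed $2$-handles along banded symplectic-basis curves of $\hat\Sigma\sqcup\hat\Sigma'$, together with unspecified $1$- and $3$-handles, and you flag this yourself as ``the hard part''. None of the required claims is actually verified---that the banded curves are null-homologous, that their linking matrix vanishes, that the outgoing boundary is $\Sigma_g\times S^1$, and that the auxiliary handles can be arranged without spoiling semi-definiteness or $H_1$-injectivity---and some are genuinely delicate (for instance, symplectic-basis curves of $\hat\Sigma$ need not be null-homologous in $Y$, so the framing and linking bookkeeping is not automatic).

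The paper sidesteps all of this with a one-line construction. Push $\hat\Sigma$ to level $\tfrac12$ in $Y\times I$ and \emph{excise an open tubular neighbourhood} $N\cong\hat\Sigma\times D^2$ (the normal bundle is trivialised by $\partial/\partial t$). Then $W=Y\times I\setminus N$ is automatically a cobordism from $Y\sqcup(-Y)$ to $\hat\Sigma\times S^1$; drilling out an arc joining the two copies of $Y$ converts the incoming end into $Y\#(-Y)$. Because $W\subset Y\times I$, its intersection form is identically zero, so $b_2^-(W)=0$ and $c_1(\fs)^2=0$ for every $\fs$; the \spinc structure $\fs$ is simply the restriction of the product structure $\ft\times I$, which visibly restricts to $\ft\#\ft$ on $Y\#(-Y)$ and to the torsion structure on $\hat\Sigma\times S^1$. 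The only nontrivial check is the $H_1$-injectivity hypothesis of Theorem~\ref{t:dtwcobordism}, which the paper handles by a short duality argument (Lemma~\ref{l:injectiveH1}) using that $[\hat\Sigma]$ generates $H_2(Y)$. No second copy of the surface and no handle attachments are needed.
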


We state here the specialised version in which $K_0$ is the unknot; if we perform $0$-surgery on $K_0$, $K_1$ becomes a null-homologous knot in $\es$, that we denote with $K$.
In this case, $Y_{L,n}$ is also obtained as $n$-surgery along $K$;
since we want to emphasise that $Y_{L,n}$ should be regarded as a surgery along $K\subset\es$, we write $Y_{K,n}$ instead of $Y_{L,n}$.

\begin{thm}\label{t:geomlowerbound}
If $Y_{K,n}$ is obtained by doing $n$-surgery along $K$, then:
\begin{equation}\label{e:geomlowerboundN}
\left\lceil\frac{\geomi(K)}4\right\rceil \ge \frac12\max_{\ft\in\Spinc(Y_{K,n})}\{\dtw(Y_{K,n},\ft) + \dtw(-Y_{K,n},\ft) + 1\}.
\end{equation}
\end{thm}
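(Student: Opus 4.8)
\textbf{Proof plan for Theorem~\ref{t:geomlowerbound}.}

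The plan is to derive Theorem~\ref{t:geomlowerbound} as the special case of Theorem~\ref{t:geomlowerboundthurston} in which $K_0 = \bigcirc$ is the unknot, and then to translate the Thurston norm term into the geometric winding number. First I would recall that when $K_0 = \bigcirc$, performing $0$-surgery on $K_0$ turns $K_1 = K$ into a null-homologous knot in $\es$, and $Y_{L,n} = Y_{K,n}$; this is exactly the setup described just before the statement. So the right-hand side of~\eqref{e:geomlowerboundN} is literally the right-hand side of~\eqref{e:geomlowerboundNthuston} for this particular link $L = \bigcirc \cup K$. The content is therefore entirely in relating $\min_{L' \sim L}\lceil (x(\PD[\mu_0']) + 1)/4\rceil$ to $\lceil \geomi(K)/4\rceil$.

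The key step is to show that, under this specialisation,
\[
\min_{L' \sim L}\left(x(\PD[\mu_0']) + 1\right) \le \geomi(K),
\]
which suffices since $\lceil\cdot/4\rceil$ is monotone. By Lemma~\ref{lemma:gtildeforever}, for any link $L' = K_0' \cup K_1'$ we have $\frac{1 + x(\PD[\mu_0'])}{2} = \widetilde{g}(K_0', K_1')$, the minimal genus of a surface bounding $K_0'$ in the complement of $K_1'$. So I need to produce, from a $2$-sphere $S \subset \es$ generating $H_2(\es;\Z)$ meeting $K$ in $\geomi(K)$ points, a link $L' \sim L$ with $\widetilde{g}(K_0', K_1')$ small. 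The idea: realising $\es$ as $0$-surgery on $\bigcirc$, a generating sphere $S$ is obtained by capping off the Seifert disk of $\bigcirc$ with the core of the surgery solid torus. After an isotopy of $K$, intersect with the ``old'' $S^3$; the sphere $S$ is cut into a planar surface (a disk with the surgery-solid-torus part removed), and $K$ meets the disk bounded by $\bigcirc$ in $\geomi(K)$ points. Pushing $K$ off this disk and recording how it threads through gives a new link $\bigcirc \cup K'$ in $S^3$, strongly concordant to $\bigcirc \cup K$ (the concordance being swept out by the isotopy of $K$ in $\es$, which can be arranged to have $\bigcirc$ fixed), such that $K'$ bounds a surface in the complement of $\bigcirc$ built from the disk by tubing along the $\geomi(K)$ intersection points; this gives $\widetilde{g}(\bigcirc, K') \le \lceil \geomi(K)/2\rceil$ or so, hence $x(\PD[\mu_0']) + 1 \le \geomi(K)$ after checking parities. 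Combined with Theorem~\ref{t:geomlowerboundthurston} this yields~\eqref{e:geomlowerboundN}.

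The main obstacle I anticipate is making the correspondence between ``geometric winding number data in $\es$'' and ``a strongly concordant link in $S^3$ with small $\widetilde{g}$'' precise — in particular, checking that the isotopy of $K$ in $\es$ pushing it into generic position relative to $S$ really does trace out a \emph{strong} concordance of links (i.e. two disjoint embedded annuli, with the $\bigcirc$-component annulus being a product), and getting the genus/Euler-characteristic bookkeeping exactly right so the ceiling functions match rather than being off by one. The Heegaard Floer input is a black box here (Theorem~\ref{t:geomlowerboundthurston}); the work is the topological dictionary between knots in $\es$ up to concordance and $2$-component links with an unknotted component up to strong concordance, which is essentially the concordance-theoretic refinement of the Property~R discussion in the introduction.
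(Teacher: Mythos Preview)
Your plan is essentially the paper's: specialise Theorem~\ref{t:geomlowerboundthurston} to $K_0 = \bigcirc$, then use the tubing construction and Lemma~\ref{lemma:gtildeforever} to relate the Thurston norm side to $\geomi(K)$. The detour through a strongly concordant link $L'$, however, is unnecessary, and the ``obstacle'' you anticipate does not arise.

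The paper simply observes that both sides of~\eqref{e:geomlowerboundN} depend only on the isotopy class of $K$ in $\es$, so one may assume from the outset that $K$ meets the standard sphere $\Sigma'' = S^2 \times \{1\}$ in exactly $2g = \geomi(K)$ points. Tubing $\Sigma''$ along $K$ then yields a closed genus-$g$ surface $\Sigma' \subset \es \setminus K$ representing the generator of $H_2(\es)$; by construction $\Sigma'$ meets the surgery solid torus in a single meridian disc, and removing that disc gives a genus-$g$ Seifert surface for $\bigcirc$ in $S^3 \setminus K_1$. Hence $\widetilde g(\bigcirc, K_1) \le \geomi(K)/2$ for the \emph{original} link $L$, so by Lemma~\ref{lemma:gtildeforever} we have $x(\PD[\mu_0]) + 1 \le \geomi(K)$, and Theorem~\ref{t:geomlowerboundthurston} (taking $L' = L$ in the minimum) gives~\eqref{e:geomlowerboundN} directly. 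No passage to a concordant link, and no verification that an isotopy traces out a strong concordance, is required. Note also a small slip in your write-up: it is $\bigcirc$, not $K'$, that bounds the tubed surface in the complement of the other component.
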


We also observe that Theorems~\ref{t:mainthurston} and~\ref{t:main}  are an immediate corollary of Theorem~\ref{t:geomlowerboundthurston}, obtained by setting $n=1$ and using Lemma~\ref{lemma:gtildeforever}.\\

Finally, we show that, in some special cases, we can compute the right-hand side of~\eqref{e:geomlowerbound} more explicitly.

\begin{cor}\label{c:surgery}
Suppose $Y_{K,1}$ is obtained as $0$-surgery along a knot $J\subset S^3$. Then
\begin{equation}\label{e:geomlowerbound1}
\left\lceil\frac{\geomi(K)}4\right\rceil \ge V_0(J) + V_0(-J).
\end{equation}
\end{cor}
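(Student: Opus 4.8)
The plan is to deduce Corollary~\ref{c:surgery} from Theorem~\ref{t:geomlowerbound} by a short computation, the only inputs being Proposition~\ref{p:0surgery} and the behaviour of $0$-surgery under orientation reversal. First I would specialise Theorem~\ref{t:geomlowerbound} to $n=1$, so that $Y_{K,1}$ is the $(+1)$-surgery along $K\subset\es$; by hypothesis $Y_{K,1}=S^3_0(J)$. Since $S^3_0(J)$ has $b_1=1$ with torsion-free first homology, it carries a unique torsion \spinc structure, and likewise so does $-Y_{K,1}$; as $\dtw$ is by definition evaluated only on torsion \spinc structures, the maximum over $\Spinc(Y_{K,1})$ in~\eqref{e:geomlowerboundN} reduces to the single value attached to that torsion \spinc structure.

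Next I would record the identification $-S^3_0(J)=S^3_0(-J)$: reversing the orientation of $S^3$ turns $n$-surgery into $(-n)$-surgery and mirrors the knot, while reversing the orientation of the knot leaves the surgered manifold unchanged, so $0$-surgery on $J$ and on its mirror reverse $-J$ yield orientation-reversed $3$-manifolds. The unique torsion \spinc structures correspond under this identification, so~\eqref{e:geomlowerboundN} with $n=1$ becomes
\[
\left\lceil\frac{\geomi(K)}4\right\rceil \ge \frac12\left(\dtw(S^3_0(J)) + \dtw(S^3_0(-J)) + 1\right).
\]

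Finally I would apply Proposition~\ref{p:0surgery} twice, once to $J$ and once to $-J$, obtaining $\dtw(S^3_0(J)) = -\tfrac12 + 2V_0(-J)$ and $\dtw(S^3_0(-J)) = -\tfrac12 + 2V_0(J)$. Substituting these into the displayed inequality and simplifying collapses the right-hand side to $V_0(J)+V_0(-J)$, which is exactly~\eqref{e:geomlowerbound1}.

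I do not expect a genuine obstacle here: the corollary is essentially a bookkeeping consequence of the two cited results. The only point needing a little care is the \spinc-structure matching under orientation reversal, together with the observation that each of $\pm S^3_0(J)$ carries exactly one torsion \spinc structure, so that the $\max$ in~\eqref{e:geomlowerboundN} really does reduce to the single term computed by Proposition~\ref{p:0surgery}.
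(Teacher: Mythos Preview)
Your proposal is correct and follows essentially the same route as the paper's proof: apply Theorem~\ref{t:geomlowerbound} with $n=1$, use $-S^3_0(J)=S^3_0(-J)$, and invoke Proposition~\ref{p:0surgery} for each orientation to evaluate the two twisted correction terms. The only difference is that you are a bit more explicit than the paper about why the maximum over \spinc structures collapses to the single torsion one, which is a welcome clarification rather than a departure.
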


\begin{proof}
If $Y_{K,1} = S^3_0(J)$, then, by Proposition~\ref{p:0surgery}:
\[
\dtw(Y_{K,1},\ft_0) = \dtw(S^3_0(J)) = d_b(S^3_0(J)) = d_{-1/2}(S^3_0(J)) = -\frac12 + 2V_0(-J).
\]
Since $-Y_{K,1} = S^3_0(-J)$, one also has
\[
\dtw(-Y_{K,1},\ft_0) = -\frac12 + 2V_0(J),
\]
and substituting them in~\eqref{e:geomlowerbound} yields the desired inequality.
\end{proof}

We now turn to the proof of Theorem~\ref{t:geomlowerboundthurston}.
To this end, we set up some notation and give some preliminary constructions.

Suppose that $\Sigma'$ is the closed surface obtained by capping off a minimal genus surface cobounding $K_0$ in the complement of $K_1$ with the core of the $0$-framed handle.

\begin{figure}
\labellist
\pinlabel $0$ at 280 275
\endlabellist
\includegraphics[width = 0.7\textwidth]{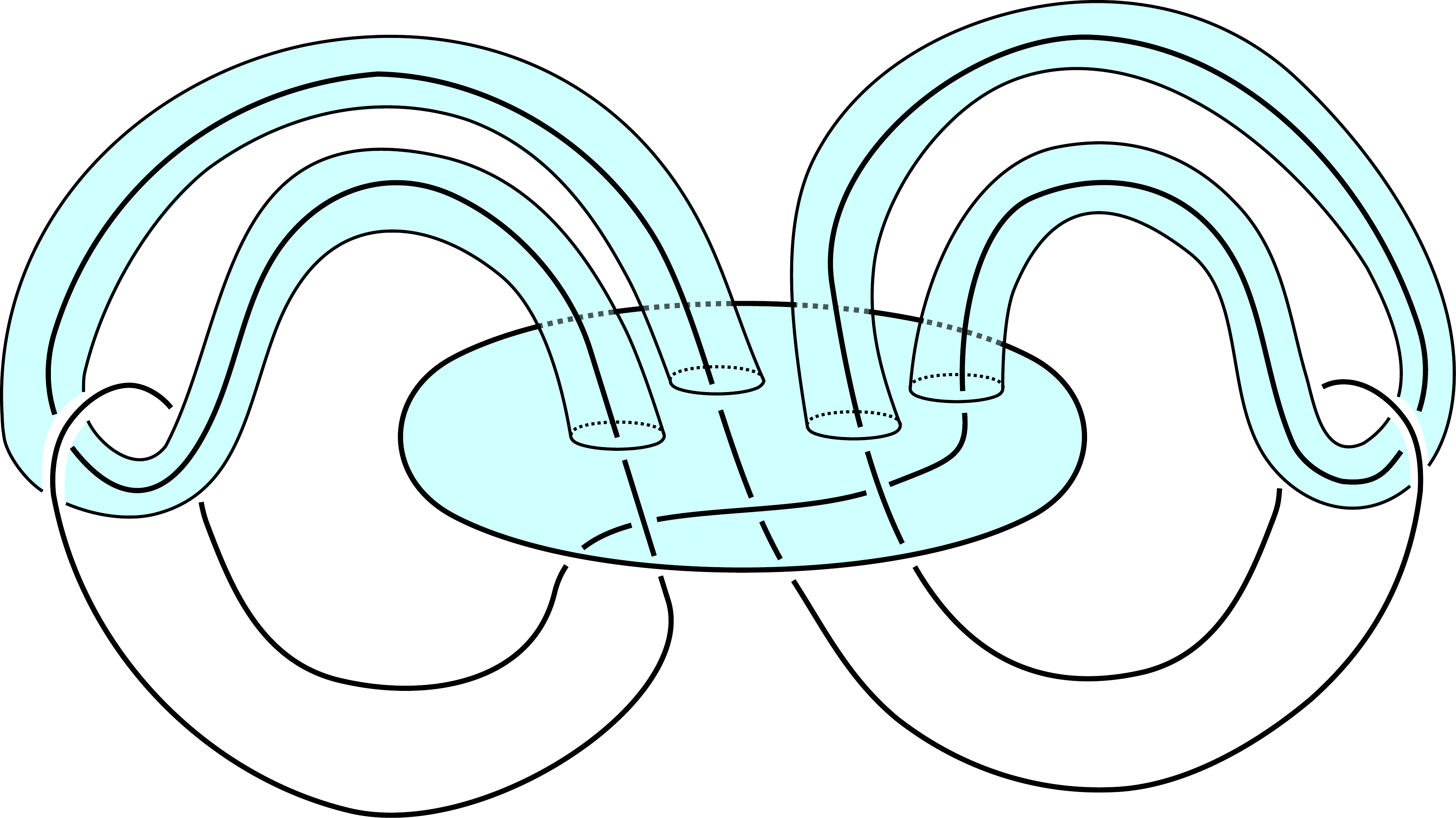}
\caption{The closed surface $\Sigma_1$, in the case of a link $K \cup \bigcirc$ with $0$ linking number. $K$ intersects the sphere obtained by capping off the disk bounded by $\bigcirc$ four times.
Note that some nesting of the tubes might be necessary.}
\label{fig:tubing}
\end{figure}

To shorten up the notation, we also let $Y := Y_{L,n} = S^3_{(0,n)} (K_0\cup K_1)$.

Since $\Sigma' \subset S^3_0(K_0)$ is disjoint from $K_1$, $\Sigma'$ survives in $Y$, and its homology class generates $H_2(Y) \cong \Z$.
With a slight abuse of notation, we still denote it with $\Sigma'\subset Y$.

Consider now the trivial cobordism $Y\times I$; then $\Sigma'\times\{1/2\}$ is a surface in $Y\times I$ with trivial normal bundle (e.g.~because it is trivialised by $\partial/\partial t$, where $t$ parametrises the interval $I$).
We denote $\Sigma'\times\{1/2\}$ by $\Sigma$.
Call $W$ the 4-manifold $Y\times I \setminus N$, where $N$ is a regular neighbourhood of $\Sigma$;
since $\Sigma$ has trivial normal bundle, $N$ is diffeomorphic to $\Sigma\times D^2$, and $-\partial N$ is diffeomorphic to $\Sigma\times S^1$.

We view $W$ as a cobordism from $Y\sqcup -Y$ to $\Sigma\times S^1$.
We want to apply Theorem~\ref{t:dtwcobordism};
in order to do so, we need the following lemma.

\begin{lemma}\label{l:injectiveH1}
In the notation above, the inclusion $Y\sqcup -Y \hookrightarrow W$ induces an injective map $H_1(Y\sqcup -Y;\Q) \to H_1(W;\Q)$.
\end{lemma}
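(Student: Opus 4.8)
The plan is to compute $H_1(W;\Q)$ directly via a Mayer--Vietoris argument and compare it with $H_1(Y\sqcup -Y;\Q) \cong \Q^2$, generated by the cores of the two $0$-framed surgery solid tori (equivalently, by $\mu_0$ and its copy in $-Y$). Decompose $Y\times I = W \cup N$ along $\partial N \cong \Sigma\times S^1$, with $N \cong \Sigma\times D^2$ deformation retracting onto $\Sigma$. Since $\Sigma$ has genus $g = \tfrac{1+x(\PD[\mu_0])}2$ (by Lemma~\ref{lemma:gtildeforever}, with the core of the $0$-handle capping off a minimal-genus surface for $K_0$ in the complement of $K_1$), we have $H_1(\Sigma;\Q) \cong \Q^{2g}$ and $H_1(\Sigma\times S^1;\Q) \cong \Q^{2g+1}$, the extra generator being the $S^1$ factor, i.e.\ the meridian $\mu$ of $\Sigma$ in $Y\times I$.

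The Mayer--Vietoris sequence in rational homology reads
\[
H_1(\Sigma\times S^1;\Q) \to H_1(W;\Q)\oplus H_1(\Sigma;\Q) \to H_1(Y\times I;\Q) \to H_0(\Sigma\times S^1;\Q) \to \cdots
\]
Now $H_1(Y\times I;\Q) = H_1(Y;\Q) \cong \Q$, generated by $\mu_0$, and the last map is injective on the reduced part, so the map $H_1(W;\Q)\oplus H_1(\Sigma;\Q) \to H_1(Y;\Q)$ is surjective. The point is to understand the image of $H_1(\Sigma\times S^1;\Q)$: the $2g$ classes from $H_1(\Sigma)$ map isomorphically onto the $H_1(\Sigma)$-summand and to zero in $H_1(W)$ up to that identification, while the meridian class $\mu$ maps to a class in $H_1(W;\Q)$ that is (rationally) nonzero — indeed $\mu$ is homologous in $W$ to a meridian of $\Sigma'$ inside a fixed slice $Y\times\{1/2-\epsilon\}$, and a small computation in $H_1(Y\setminus\Sigma';\Q)$ shows $\mu$ is nontrivial there since $[\Sigma']$ generates $H_2(Y;\Q)\cong\Q$ and intersects $\mu$ transversally once. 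Tracking this through gives $H_1(W;\Q) \cong \Q^2$, and one checks that the two inclusions $Y\hookrightarrow W$ and $-Y\hookrightarrow W$ send $\mu_0$ (resp.\ its mirror copy) to classes that together span this $\Q^2$: concretely, $\mu_0$ is isotopic in $Y\times I$ to $\mu$ (since $\mu_0$ bounds a punctured copy of $\Sigma'$ meeting $N$ once), so $[\mu_0]$ and $[-\mu_0]$ correspond to the meridian contributions coming from the two ends, which are linearly independent because $W$ ``remembers'' from which side of $\Sigma$ the meridian was pushed off — formally, the two ends $Y$ and $-Y$ of $W$ give two distinct generators of $H_1(W;\Q)$ dual to the relative class $[\Sigma\times\{pt\}]\in H_3(W,\partial W;\Q)$ via the long exact sequence of the pair $(W,\partial W)$.

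The cleanest way to make the last point rigorous, and what I would actually write, is to invoke Poincaré--Lefschetz duality for the compact $4$-manifold $W$: $H_1(W;\Q)\cong H^3(W,\partial W;\Q)$, and the restriction map $H_1(\partial W;\Q)\to H_1(W;\Q)$ is dual to $H^3(W,\partial W;\Q)\to H^3(W;\Q)$... but it is simpler to argue that $H^2(W;\Q)\to H^2(\partial W;\Q)$ half-lives-half-dies forces $b_1(W) \le \tfrac12 b_1(\partial W)$ combined with the explicit generators; alternatively one notes $b_1(\partial W) = b_1(Y)+b_1(-Y)+b_1(\Sigma\times S^1) = 1+1+(2g+1) = 2g+3$ and that $2g$ of these classes (the $H_1(\Sigma)$ directions) die upon inclusion into $W$ while the remaining $3$, namely $\mu_0$, $-\mu_0$, $\mu$, satisfy the single relation $\mu_0 \sim \mu \sim -\mu_0$ only up to sign information that the intersection form detects, so at least $\mu_0$ and $-\mu_0$ survive independently. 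I would present this as: the homology class $[\Sigma']\in H_2(Y;\Q)$ is non-torsion and its Poincaré dual in $H^1(Y;\Q)$ evaluates nontrivially on $[\mu_0]$; the same is true on the $-Y$ side; and these two evaluations are ``independent'' in $H^1(W;\Q)$ because they are supported near the two different ends of the collar — hence the map on $H_1$ is injective on the $\Q^2$ spanned by the two meridians.

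The main obstacle I anticipate is precisely this last independence claim: it is intuitively clear that removing a neighbourhood of the single surface $\Sigma$ from the cylinder $Y\times I$ ``separates'' the two boundary tori' meridional contributions, but writing it without sign errors requires care with orientations (the $-Y$ end and the mirrored meridian) and with the fact that $N$ is connected, so $W = (Y\times I)\setminus N$ is still connected and $H_0$ causes no trouble but $H_1$ genuinely mixes the two ends through $\Sigma\times S^1$. I would handle it by choosing an explicit properly embedded arc in $W$ from $Y\times\{0\}$ to $-Y\times\{1\}$ missing $N$, using it to see $\mu_0$ on one end is \emph{not} homologous in $W$ to $\mu_0$ on the other end (they differ by the core circle of $N$, which is nontrivial in $H_1(W;\Q)$), thereby exhibiting $2$ linearly independent classes and concluding injectivity; dimension-counting against $b_1(W) = 2$ then shows these are everything, which is more than we need.
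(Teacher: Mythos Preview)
Your overall strategy---compute $H_1(W;\Q)$ via Mayer--Vietoris for $Y\times I = W\cup N$ and then show that the two boundary meridians span it---is viable and is genuinely different from the paper's argument. However, the proposal as written does not actually close the key step, and a couple of the intermediate claims are false.

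\textbf{The errors.} The assertion ``$\mu_0$ is isotopic in $Y\times I$ to $\mu$'' is wrong: the meridian $\mu = \{*\}\times S^1\subset \de N$ bounds a disc fibre in $N$, so $[\mu]=0$ in $H_1(Y\times I;\Q)$, whereas $[\mu_0]$ generates $H_1(Y\times I;\Q)\cong\Q$. Likewise, $N\cong\Sigma\times D^2$ has no ``core circle''. What is true (and is what your punctured-annulus picture really gives) is the relation $a_0 - a_1 = \pm[\mu]$ in $H_1(W;\Q)$, where $a_i = [\mu_0\times\{i\}]$: the annulus $\mu_0\times I$ meets $\Sigma$ transversally once, so removing $N$ leaves a pair of pants in $W$ with boundary $\mu_0\times\{0\}\sqcup -\mu_0\times\{1\}\sqcup \mu$.

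\textbf{How to finish your argument.} From the exact Mayer--Vietoris fragment
\[
0\longrightarrow H_1(\Sigma\times S^1;\Q)\longrightarrow H_1(W;\Q)\oplus H_1(\Sigma;\Q)\longrightarrow H_1(Y\times I;\Q)\longrightarrow 0
\]
(the left zero because $H_2(N)\to H_2(Y\times I)$ is onto, the right zero because the $H_0$ map is injective), one reads off $\dim H_1(W;\Q)=2$. Since $[\mu]$ dies in $H_1(N)$, injectivity on the left forces $[\mu]\neq 0$ in $H_1(W;\Q)$; since $[\mu]\mapsto 0$ in $H_1(Y\times I;\Q)$, it spans the one-dimensional kernel of $H_1(W;\Q)\to H_1(Y\times I;\Q)$. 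Now $a_0$ maps to $[\mu_0]\neq 0$, so $a_0\notin\Q[\mu]$, while $a_0-a_1=\pm[\mu]\in\Q[\mu]\setminus\{0\}$. Hence $\{a_0,a_1\}$ is linearly independent, and the inclusion map on $H_1$ is injective.

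\textbf{Comparison with the paper.} The paper argues by contradiction and avoids computing $H_1(W)$ altogether. It first observes that any class $(a,b)$ in the kernel must satisfy $b=-a$ (using $W\subset Y\times I$), represents $a$ by a curve $\alpha$, and then uses the very same pair-of-pants $(\alpha\times I)\cap W$ to get a relation $(a,-a,p\cdot m)\mapsto 0$ in $H_1(W)$. Combined with the assumed null-homology of $(a,-a)$, this produces a \emph{closed} surface $\widehat F$ in $Y\times I$ meeting $\Sigma$ with nonzero algebraic intersection, contradicting the triviality of the intersection form on $H_2(Y\times I)$. Both proofs hinge on the same geometric input (the annulus $\alpha\times I$ meeting $\Sigma$ nontrivially); the paper's packaging is shorter and generalises more readily (e.g.\ to the $\#^m\es$ case, where one does not want to compute $H_1(W)$), while your approach has the virtue of actually identifying $H_1(W;\Q)$.
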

\begin{proof}
Identify $H_1(Y \sqcup -Y;\Q)$ with $H_1(Y;\Q) \oplus H_1(Y;\Q)$ in the natural way, using the inclusions $Y\times\{0\}, Y\times\{1\} \hookrightarrow Y\times I$.
Suppose $H_1(Y\sqcup -Y;\Q) \ni (a,b) \mapsto 0 \in H_1(W;\Q)$ under map induced by the inclusion.
Taking a multiple if necessary, we can assume that $(a,b)$ is in fact an integral class.

Since $W\subset Y\times I$, in particular $(a,b)$ vanishes in $H_1(Y\times I;\Q) = H_1(Y;\Q)$;
however, with the choice we made, the map $H_1(Y\sqcup -Y;\Q) \to H_1(Y;\Q)$ can be identified with the map $(a,b) \mapsto a+b$, and therefore we obtain that $b = -a$.

Since $a$ is integral, we can represent $a$ by a simple closed curve $\alpha$ which meets $\Sigma\subset Y$ transversely in a collection of signed points $P$.
Call $p$ the signed count of points in $p$, and note that $p\neq 0$, since $a$ is a non-zero class in $H_1(Y\times I; \Q)$, which therefore pairs nontrivially with $[\Sigma]$;
in fact, changing the sign of $a$, we can assume that $p > 0$.
The surface $\alpha \times I$ bounds $(a,-a)$, and meets $\Sigma\subset Y\times I$ transversely in $P\times\{1/2\}$.
It follows that ${\alpha \times I} \cap W$ gives the relation $H_1(\de W;\Q) \ni (a,-a,p\cdot m) \mapsto 0 \in H_1(W;\Q)$, where $m$ is the meridian of $\Sigma$, i.e.~(up to orientation) the curve $\{*\} \times S^1 \subset \Sigma\times S^1 = \de N$.
In particular, there is a surface $(F_0,\partial F_0)$, properly embedded in $(W,\partial W)$, whose boundary is $(\alpha,-\alpha,m_1\cup\dots\cup m_p)\subset Y\sqcup Y \sqcup \de N$.

Since we assumed that $(a,-a)$ vanishes in $H_1(W)$, there exists a surface $(F_1,\partial F_1)$ properly embedded in $(W,Y\sqcup -Y)$, whose boundary is $(\alpha,-\alpha) \subset Y\sqcup -Y$.
Gluing $F_0$ and $F_1$ along their common boundary, we obtain a surface $(F,\de F)$, properly embedded in $(W, \de N)$.
Capping off $F$ with $p$ disc fibres $\{q\}\times D^2 \subset N$, we obtain that $\Sigma$ has a (rationally) dual surface $\widehat F$ in $Y\times I$, i.e.~$\widehat F$ and $\Sigma$ meet transversely $p$ times.

But then $[\Sigma]$ and $[\widehat F]$ are two homology classes in $H_2(Y\times I)$ intersecting non-trivially, which is clearly a contradiction, since the intersection form on $H_2(Y\times I)$ is trivial.
\end{proof}

\begin{proof}[Proof of Theorem~\ref{t:geomlowerboundthurston}]
We start by observing that the right-hand side of~\eqref{e:geomlowerboundNthuston} is an invariant of the strong concordance class of $L$;
in fact, if $L^\prime$ is concordant to $L$, then $Y_{L^\prime,n}$ is (integrally) homology cobordant to $Y_{L,n}$, and therefore $\dtw(\pm Y_{L^\prime,n},\ft_i) = \dtw(\pm Y_{L,n},\ft_i)$.

We view $W$ as a cobordism from $Y\sqcup -Y$ to $\Sigma\times S^1$.
Let $\gamma \subset W$ be an embedded arc connecting the two boundary components $Y$ and $-Y$, and remove a small regular neighbourhood of $\gamma$ from $W$, to obtain $W'$.
This is now a cobordism from $Y\#-Y$ to $\Sigma\times S^1$, and Lemma~\ref{l:injectiveH1} above implies that the inclusion of $Y\# -Y$ in $W'$ induces an injective map at the level of $H_1$ with rational coefficients.

Now call $\fs_i$ the restriction to $W'$ of the unique \spinc structure on $Y\times I$ that restricts to $\ft_i$ on $Y\times \{0\}$.
Note that $\fs_i$ is uniquely defined, since $Y\times I$ is a product, and that $\fs_i$ also restricts to $\ft_i$ on $Y\times\{1\}$.
Observe also that $\fs_i$ restricts to the unique torsion \spinc structure on $\partial N$, and that $c_1^2(\fs_i) = 0$, since the intersection form of $W'$ is trivial.

Thanks to Lemma~\ref{l:injectiveH1}, and since $W$ is negative semidefinite, we are in the assumptions of Theorem~\ref{t:dtwcobordism} to the \spinc structures $\fs_i$; this yields:
\begin{align*}
4\dtw(Y,\ft_i)+4\dtw(-Y,\ft_i)+4 &= c_1^2(\fs_i) + b^-(W') + 4\dtw(Y \# -Y, \ft_i\#\ft_i) + 2b_1(Y \# -Y)  \le\\
& \le  4\dtw(\Sigma\times S^1) + 2b_1(\Sigma\times S^1) = 8\left\lceil\frac g2\right\rceil;
\end{align*}
where the last equality is Proposition~\ref{p:dtwcomputations}.
\end{proof}

The proof of Theorem~\ref{t:geomlowerbound} is a special case of the previous one, in which the component $K_0$ is assumed to be unknotted, hence $S^3_0(K_0) = \es$. 
We can assume that $K_1$ intersects the sphere $\Sigma'' = S^2\times\{1\}$ transversely in $2g = \geomi(K)$ points;
by tubing $\Sigma''$ along $K$, we obtain a surface $\Sigma' \subset \es$ disjoint from $K$, as in Figure~\ref{fig:tubing}.
We note here that $\Sigma'$ has genus $g$, and that it represents the generator of $H_2(\es)$
\begin{proof}[Proof of Theorem~\ref{t:geomlowerbound}]
The proof is readily obtained by applying Theorem~\ref{t:geomlowerboundthurston}, and reinterpreting the result in the light of Lemma~\ref{lemma:gtildeforever}.
\end{proof}

\subsection{The case of knots in $\#^m\es$}

Theorem~\ref{t:geomlowerbound} extends to the case of null-homologous knots in $\#^m\es$ as follows.
Given a null-homologous knot $K$ in $\#^m\es$, let $\{S_1,\dots,S_m\}$ be a collection of $m$ pairwise disjoint spheres in $\#^m\es$ whose homology classes generate $H_2(\#^m\es)$;
suppose $K$ intersects $S_i$ transversely for each $i$; denote with $\geomi_i(K)$ the geometric intersection of $K$ and $S_i$, and $\geomi(K) = \sum_{i=1}^m \geomi_i(K)$.

\begin{thm}
Let $K$ be a null-homologous knot in $\#^m\es$, 
Then
\[
\sum_{i=1}^n \left\lceil \frac{\geomi_i(K)}4 \right\rceil \ge \frac12 \max_{\ft\in\Spinc(Y_{K,n})} \{\dtw(Y_{K,n}, \ft) + \dtw(-Y_{K,n},\ft) + 1\}.
\]
\end{thm}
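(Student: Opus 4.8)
The plan is to imitate the proof of Theorem~\ref{t:geomlowerboundthurston} essentially verbatim, replacing the single surface $\Sigma'$ by the disjoint union of the $m$ tubed-off surfaces, and replacing $\Sigma\times S^1$ by a disjoint union of circle bundles. Concretely, for each $i$ take the sphere $S_i$, tube it along $K$ to get a surface $\Sigma'_i\subset\#^m\es$ of genus $g_i = \geomi_i(K)/2$, disjoint from $K$ and from the other tubed surfaces (here one has to be a little careful to choose the spheres and the tubing so that the $\Sigma'_i$ are pairwise disjoint, but this is a routine general-position argument since the $S_i$ can be taken disjoint and the tubes can be pushed off each other). Each $\Sigma'_i$ survives into $Y := Y_{K,n}$, and together their classes generate $H_2(Y)\cong\Z^m$.

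First I would form $W := Y\times I \setminus N$, where $N = N_1\sqcup\dots\sqcup N_m$ is a regular neighbourhood of $\Sigma := (\Sigma'_1\sqcup\dots\sqcup\Sigma'_m)\times\{1/2\}$; since each $\Sigma'_i$ has trivial normal bundle, $N_i\cong\Sigma'_i\times D^2$ and $-\partial N_i\cong\Sigma'_i\times S^1$. So $W$ is a cobordism from $Y\sqcup -Y$ to $\bigsqcup_{i=1}^m \Sigma'_i\times S^1$, it is negative semidefinite (the intersection form on $H_2(Y\times I)$ is trivial), and the $\Spinc$ structures $\fs_i$ pulled back from $Y\times I$ all have $c_1^2(\fs_i)=0$ and restrict to torsion classes on the ends. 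Next I would prove the analogue of Lemma~\ref{l:injectiveH1}: the inclusion $Y\sqcup -Y\hookrightarrow W$ induces an injection on $H_1(-;\Q)$. The argument is identical to the one in the excerpt — a class $(a,-a)$ dying in $H_1(W;\Q)$ produces, after capping off with disc fibres of the $N_i$, a surface in $Y\times I$ rationally dual to one of the $\Sigma'_i$, contradicting triviality of the intersection form on $H_2(Y\times I)$; the only change is bookkeeping over which $N_i$ the curve $\alpha$ meets.

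Then, exactly as before, I would tube the two incoming ends of $W$ together along an arc to get $W'$, a cobordism from $Y\#-Y$ to $\bigsqcup_i \Sigma'_i\times S^1$, still negative semidefinite and still $H_1$-injective on the incoming end. Applying Theorem~\ref{t:dtwcobordism} to $(W',\fs_i)$, and using additivity of $\dtw$ and of $b_1$ under disjoint union on the outgoing end together with Proposition~\ref{p:dtwcomputations} for each factor, gives
\[
4\dtw(Y,\ft_i) + 4\dtw(-Y,\ft_i) + 4 \le \sum_{j=1}^m\bigl(4\dtw(\Sigma'_j\times S^1) + 2b_1(\Sigma'_j\times S^1)\bigr) = \sum_{j=1}^m 8\left\lceil\frac{g_j}2\right\rceil = \sum_{j=1}^m 8\left\lceil\frac{\geomi_j(K)}4\right\rceil.
\]
Dividing by $8$ and taking the maximum over $\ft_i$ yields the stated inequality. (Strictly, one should note $\geomi_j(K)$ was defined as a minimum over spheres; one runs the argument for a fixed optimal choice of pairwise disjoint spheres, or simply reads the inequality for each admissible choice and is free to take the minimum on the left.)

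The main obstacle is the disjointness of the tubed surfaces $\Sigma'_1,\dots,\Sigma'_m$: one needs the $S_i$ disjoint (automatic for a generating system realising the $\geomi_i$, or at worst after a small isotopy) and the tubes running along $K$ to avoid one another, which requires pushing the $i$-th collection of tubes into disjoint collar regions along $K$; everything else is a direct transcription of the $m=1$ proof, with disjoint-union additivity doing the combinatorial work.
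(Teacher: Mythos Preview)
Your proposal is correct and follows essentially the same route as the paper: tube each sphere $S_i$ along $K$ to get pairwise disjoint surfaces $\Sigma'_i$, remove their neighbourhoods from $Y\times I$, verify the analogue of Lemma~\ref{l:injectiveH1} (the paper phrases this as ``for some index $i$ the intersection of $\alpha\times I$ with $\Sigma_i$ is non-zero, so work in $Y\times I\setminus N(\Sigma_i)$'', which is a cosmetic variant of your capping-off argument), and then apply Theorem~\ref{t:dtwcobordism}. One small clean-up: rather than invoking ``additivity of $\dtw$ under disjoint union'' on the outgoing end, it is tidier to also tube the outgoing components together along arcs, so that the outgoing boundary is the connected sum $\#_i\,\Sigma'_i\times S^1$, and then use the additivity of $\dtw$ and $b_1$ under connected sum stated in Section~\ref{sec:hf}; the resulting inequality is identical.
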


Note that this can be used to give a (quite coarse) concordance lower bound on $\geomi(K)$.
Indeed, if $\{S_1,\dots,S_m\}$ is a collection of spheres that minimises the total geometric intersection, then
\[
\geomi(K) = \sum_{i=1}^m \geomi_i(K) \ge \sum_{i=1}^m 4\left(\left\lceil \frac{\geomi_i(K)}4 \right\rceil - \frac12\right) = 4\sum_{i=1}^m \left\lceil \frac{\geomi_i(K)}4 \right\rceil - 2m,
\]
so that
\[
\geomi(K) \ge 2\max_{i=0,\dots,n-1} \{\dtw(Y_{K,n}, \ft_i) + \dtw(-Y_{K,n},\ft_i) + 1\} - 2m.
\]
As above, the right-hand side of the latter inequality is invariant under concordance, so we get a concordance lower bound.
The proof is very similar to the proof of Theorem~\ref{t:geomlowerbound}, therefore we only outline the differences here.

\begin{proof}[Proof (sketch)]
From the collection $\{S_1,\dots,S_m\}$ we construct $m$ pairwise disjoint, orientable surfaces $\Sigma_1,\dots,\Sigma_m$ in $Y_{K,n}\times \{1/2\} \subset Y_{K,n}\times I$ by tubing $S_1,\dots S_m$ along $K$, as in the proof of Theorem~\ref{t:geomlowerbound}.
The genus of $\Sigma_i$, which is obtained from $S_i$ by tubing along $K$, is exactly $g_i := \frac{\geomi_i(K)}2$.

We construct a cobordism $W$ from $Y_{K,n}\sqcup -Y_{K,n}$ to $\sqcup_i \Sigma_i\times S^1$ by removing the tubular neighbourhood of $\Sigma_1, \dots, \Sigma_m$ from $Y_{K,n}\times I$.

We now claim that Lemma~\ref{l:injectiveH1} still holds for the cobordism $W$ we just constructed.
Again, we can suppose that we have a class $a \neq 0\in H_1(Y_{K,n})$ such that $(a,-a)\in H_1(Y_{K,n})\oplus H_1(Y_{K,n})$ vanishes under the map induced by the inclusion $Y_{K,n}\sqcup -Y_{K,n} \hookrightarrow W$, and that $a$ is represented by a curve $\alpha$.
The only difference in the two proofs is the following: in the proof of Lemma~\ref{l:injectiveH1} we had only one surface $\Sigma$, and we argued that the algebraic intersection number between $\Sigma$ and $\alpha\times I \subset Y\times I$ was non-zero by assumption that $[\alpha] \neq 0 \in H_1(Y)$;
in the new setup, we know that \emph{for some index} $i$, the intersection between $\Sigma_i$ and $\alpha\times I \subset Y_{K,n}\times I$ is non-zero;
now work in $Y_{K,n}\times I \setminus N(\Sigma_i)$, and run the same argument.

The rest of the proof applies verbatim.
\end{proof}

\section{The essential case}\label{sec:essential}

In this section, we will see how to deal with the case of $K$ essential in $\es$, and more specifically when $[K] = w\cdot[\{*\}\times S^1] \in H_1(\es)$, for some \emph{even} integer $w$; without loss of generality, we assume that $w$ is positive.
To get a knot in a class divisible by $2$, one can simply take a satellite of $K$ using a pattern with even winding number, e.g.~a 2-cable.
To this end, we will combine the topological construction from the previous section with arguments from~\cite{levine2015nonorientable}.
As in the null-homologous case, this will turn out to be a \emph{concordance} bound for $K$, i.e. a lower bound for $\geomi_\C(K)$.

We note that the setup is slightly different in this case; for instance, we do not have a well-defined way to associate an integer to a framing.
To remedy this, we fix a handlebody presentation of $(\es,K)$, where $\es$ is viewed as the boundary of $D^3\times S^1$, and the latter is obtained by carving a disc from $B^4$; as usual, the carved disk will be denoted by a dotted circle.
Such a presentation for $(\es,K)$ gives a bijection between framings of $K$ and the integers;
we will be sloppy and use this bijection without explicitly mentioning the presentation.

Let $Y_n(K)$ be the 3-manifold obtained by doing $n$-surgery along $K$.
The following proposition is well-known, and we shall omit the proof.

\begin{prop}\label{p:H1Y}
The $3$-manifold $Y_n(K)$ is a rational homology sphere; its first homology group $H_1(Y_n(K))$ is generated by the classes of the meridians of the attaching curve of the dotted circle and of $K$; $H_1(Y_n(K)) \cong \Zmod{d} \oplus \Zmod{d'}$, where $d = \gcd(n,w)$, and $dd' = w^2$.
\end{prop}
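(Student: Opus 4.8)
The plan is to reduce the statement to a Dehn-surgery description of $Y_n(K)$ on a two-component framed link in $S^3$, and then to read off $H_1$ from the associated linking matrix. First I would unwind the handlebody presentation fixed in the statement: $\es=\partial(D^3\times S^1)$, where $D^3\times S^1=B^4\setminus\nu(D)$ for $D$ the standard slice disc of an unknot $U\subset S^3=\partial B^4$. Thus $Y_n(K)$ is the boundary of the $4$-manifold obtained from $B^4$ by carving $D$ (equivalently, attaching a $1$-handle along the dotted unknot $U$) and then attaching a $2$-handle along $K$ with framing $n$; by construction $\lk(U,K)=w$. The key — and standard — observation is that trading the dotted circle for a $0$-framed $2$-handle along $U$ does not change the \emph{boundary}: $B^4\setminus\nu(D)\cong S^1\times B^3$, and the identification of its boundary $S^1\times S^2$ with $S^3_0(U)$ may be taken to be the identity on $S^3\setminus\nu(U)$, which contains $K$. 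Hence $Y_n(K)\cong S^3_{(0,n)}(U\cup K)$, the result of surgery on the $2$-component link $U\cup K$ with coefficient $0$ on $U$, coefficient $n$ on $K$, and $\lk(U,K)=w$.

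Next I would compute $H_1$ directly from this surgery description. Writing $\mu_U,\mu_K$ for the meridians of $U$ and $K$, one has $H_1(S^3\setminus\nu(U\cup K))=\Z\langle\mu_U\rangle\oplus\Z\langle\mu_K\rangle$, and gluing in the two surgery solid tori imposes exactly the relations encoded by the linking matrix $\Lambda=\begin{pmatrix}0&w\\ w&n\end{pmatrix}$: the $0$-filling on $U$ kills $w\mu_K$, and the $n$-filling on $K$ kills $w\mu_U+n\mu_K$. Therefore $H_1(Y_n(K))\cong\Z^2/\Lambda\Z^2$, generated by the images of $\mu_U$ and $\mu_K$, which are the meridians named in the proposition. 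Since $\det\Lambda=-w^2\neq 0$ (using $w\neq 0$), this group is finite, so $b_1(Y_n(K))=0$; because $Y_n(K)$ is closed and oriented we have $b_2=b_1$ by Poincar\'e duality, and hence $Y_n(K)$ is a rational homology sphere. To pin down the isomorphism type I would put $\Lambda$ in Smith normal form: its first invariant factor is $d:=\gcd(w,n)$ and the product of the two invariant factors equals $|\det\Lambda|=w^2$, so $\Lambda$ is equivalent to $\begin{pmatrix}d&0\\ 0&w^2/d\end{pmatrix}$, and thus $H_1(Y_n(K))\cong\Z/d\oplus\Z/d'$ with $d'=w^2/d$, whence $dd'=w^2$.

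I do not expect a genuine obstacle here; the one step that needs care is the reduction in the first paragraph — justifying that carving the disc (i.e. the dotted $1$-handle) may be replaced, at the level of the boundary, by $0$-surgery on the unknot $U$, and checking that the meridians named in the proposition are exactly the two generators appearing in the surgery presentation. Both are routine Kirby calculus, which is presumably why the authors omit the proof.
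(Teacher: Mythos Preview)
Your argument is correct, and in fact the paper omits the proof entirely (``The following proposition is well-known, and we shall omit the proof''), so there is nothing to compare against. Your approach---trading the dotted circle for a $0$-framed unknot, writing down the linking matrix $\begin{pmatrix}0&w\\ w&n\end{pmatrix}$, and reading off the Smith normal form---is exactly the standard route one would expect and is presumably what the authors had in mind.
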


We note here that, in fact, $\gcd(n,w)$ is independent of the chosen presentation of $(\es,K)$.
From now on, we restrict to the case when $\gcd(n,w) = 1$, and hence $H_1(Y_n(K))$ is cyclic of order $w^2$.
Moreover, since the exact value of $n$ will not play any significant role, we drop it from the notation, and we write $Y$ in place of $Y_n(K)$.
In fact, under the assumption above, $H_1(Y)$ is generated by the class $[\mu]$ of the meridian of $K$; the meridian of the 2-handle is homologous to $w[\mu]$. (The latter class is always the generator of the metaboliser of $H_1(Y)$ associated to the obvious rational homology ball filling of $Y$.)

Since we assumed that $w$ is even, $H_1(Y)$ is cyclic of even order, and thus every element has an \emph{opposite}: the opposite of the element $k[\mu]\in H_1(Y)$ is the element $(k+w^2/2)[\mu] \in H_1(Y)$.
This gives an involution of $H_1(Y)$ without fixed points, and, correspondingly, the set of \spinc structures on $Y$ comes equipped with a fixed-point--free involution, that associates to $\ft\in\Spinc(Y)$ the \spinc structure $\fto = \ft + w^2/2\cdot \PD([\mu])$.

In the notation of~\cite{levine2015nonorientable}, $w^2/2\cdot \PD([\mu])$ is called $\varphi$, i.e. a 2-torsion class in $H^2(Y)$; in our setting, this characterises $\varphi$ uniquely.
Incidentally, we note here that $c_1(\fto) = c_1(\ft)$. We remark here that being opposite is not to be confused with being conjugate; both conjugation and opposition are involutions on the set of \spinc structures of $Y$, but the former has fixed points (the two spin structures on $Y$), preserves the value of the correction term, and changes the sign of the first Chern class.

\begin{thm}\label{t:geomi-essential}
With the notation set up as above, we have:
\[
\geomi(K) \ge 2\max_{\ft\in\Spinc(Y)} \{d(Y,\ft) - d(Y,\fto)\}.
\]
\end{thm}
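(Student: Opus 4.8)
\textbf{Proof plan for Theorem~\ref{t:geomi-essential}.}

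The plan is to mirror the strategy used for the null-homologous case (Theorem~\ref{t:geomlowerboundthurston}), but working with ordinary correction terms on the rational homology sphere $Y = Y_n(K)$ and exploiting the fixed-point--free opposition involution $\ft\mapsto\fto$ in place of the orientation-reversal asymmetry of $\dtw$. First I would fix a collection $\mathcal{S}$ of spheres realising $\geomi(K)$ and tube them along $K$ to obtain a single orientable surface $\Sigma'\subset\es$ of genus $g = \geomi(K)/2$, disjoint from $K$, exactly as in Figure~\ref{fig:tubing}; since $K$ is essential with $[K]=w[\{*\}\times S^1]$ and $w$ is even, this surface represents $(w/2)$ times the generator of $H_2(\es)$, and in particular it has trivial normal bundle because its self-intersection vanishes and it sits inside a product. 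I then push $\Sigma'$ into $Y\times I$ to get $\Sigma = \Sigma'\times\{1/2\}$ (it survives into $Y$ since it is disjoint from the surgery solid torus), and form $W = Y\times I\setminus N(\Sigma)$, a cobordism from $Y\sqcup -Y$ to $\Sigma\times S^1$; as before, removing a neighbourhood of an arc joining the two ends turns $W$ into a cobordism $W'$ from $Y\# -Y$ to $\Sigma\times S^1$ which is negative semi-definite with trivial intersection form.

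The next step is to run the inputs from~\cite{levine2015nonorientable}: the key point there is that for a negative semi-definite cobordism one gets an inequality of the form $c_1(\fs)^2 + b_2^-(W') \le 4d(Y\#-Y,\fs|) - \dots$, but crucially the relevant \emph{spin$^c$} structures on $W'$ are the ones whose restriction to the two ends differ by the 2-torsion class $\varphi = (w^2/2)\PD([\mu])$, because $H_1(\Sigma\times S^1)\to H_2(W',\partial W')$ forces this torsion ambiguity. Concretely, for each $\ft\in\Spinc(Y)$ I would choose a spin$^c$ structure $\fs$ on $W'$ restricting to $\ft$ on $Y\times\{0\}$ and to $\fto$ on $Y\times\{1\}$ (this is exactly the "anti-diagonal" spin$^c$ structure used by Levine--Ruberman--Strle to detect the non-orientable genus), noting $c_1(\fs)^2 = 0$ since the intersection form is trivial and $c_1(\fs)$ pairs trivially with everything, and $c_1(\fs)|_{\Sigma\times S^1}$ is torsion since $c_1(\fto)=c_1(\ft)$ and both restrict compatibly. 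Feeding this $\fs$ into the relevant cobordism inequality for correction terms gives
\[
4d(Y,\ft) + 4d(-Y,\fto) + b_2^-(W') \le 4d_{?}(\Sigma\times S^1) + (\text{correction}),
\]
and then using $d(-Y,\fto) = -d(Y,\fto)$ together with the computation $d_b(\Sigma\times S^1)$ (or the appropriate bottom-most term for $\Sigma\times S^1$, which has standard $\HFoo$) expressed via $\lceil g/2\rceil$ as in Proposition~\ref{p:dtwcomputations}, I would isolate $d(Y,\ft) - d(Y,\fto) \le 2\lceil g/2\rceil \le g = \geomi(K)/2$; maximising over $\ft$ and rearranging yields the claimed bound. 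Finally I would observe, as in the other proofs, that the right-hand side is a concordance invariant of $K$ because a concordance induces a homology cobordism of the surgered manifolds respecting the labelling of spin$^c$ structures, so this is in fact a bound on $\geomi_\C(K)$.

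The main obstacle I anticipate is the bookkeeping of spin$^c$ structures and torsion classes: one has to be careful that the "anti-diagonal" spin$^c$ structure $\fs$ on $W'$ genuinely exists (obstruction in $H^2$ of the pair, which here works out precisely because $\varphi$ is 2-torsion and $H_1(Y)$ is cyclic of even order under the $\gcd(n,w)=1$ hypothesis), that its restriction to $\Sigma\times S^1$ is torsion so that the twisted/bottom-most correction term of $\Sigma\times S^1$ is the one computed in Proposition~\ref{p:dtwcomputations}, and that the inequality coming from~\cite{levine2015nonorientable} applies in the presence of $b_1(Y)=0$ but nontrivial torsion in $H_1$. A secondary technical point is re-checking that Lemma~\ref{l:injectiveH1}-type injectivity of $H_1$ is automatic here (it is, since $H_1(Y;\Q)=0$), which actually \emph{simplifies} matters compared to the null-homologous case — the content has migrated entirely into the spin$^c$-structure analysis rather than the rational $H_1$ argument.
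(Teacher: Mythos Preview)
Your proposal has a genuine gap at the very first step: the tubed surface $\Sigma'$ is \emph{not orientable} when $w\neq 0$. The signed intersections of $K$ with the sphere sum to $w$, so among the $h=\geomi(K)$ points there are $(h+w)/2$ of one sign and $(h-w)/2$ of the other; consecutive pairs along $K$ cannot all cancel, and each same-sign tube contributes a cross-cap. The resulting closed surface has $\chi=2-h$ and non-orientable genus $h$ (i.e.\ $b_1(\Sigma';\F_2)=h$), exactly as the paper states. Your side claim that $\Sigma'$ ``represents $(w/2)$ times the generator of $H_2(\es)$'' is also wrong: tubing does not change the homology class, and in fact \emph{no} closed orientable surface in $\es\setminus K$ can represent the integral generator of $H_2(\es)$, since any such surface would have algebraic intersection $w\neq 0$ with $K$.

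Once $\Sigma$ is non-orientable, the rest of your outline collapses. The normal bundle of a non-orientable surface in an orientable $4$-manifold is never a product (even when the normal Euler number vanishes, as it does here), so $\partial N(\Sigma)$ is a non-trivial circle bundle over $\Sigma$ rather than $\Sigma\times S^1$, and Proposition~\ref{p:dtwcomputations} is unavailable. There is also an internal inconsistency in your plan: the ``anti-diagonal'' \spinc structure restricting to $\ft$ on one copy of $Y$ and to $\fto$ on the other arises in~\cite{levine2015nonorientable} \emph{precisely because} the surface is non-orientable --- the non-orientability is what produces the $2$-torsion twist by $\varphi$ when one compares the two ends across $N(\Sigma)$. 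Had $\Sigma$ genuinely been orientable with trivial normal bundle, the argument of Section~\ref{sec:dimostrazione} would apply verbatim and you would only see the diagonal \spinc structures, yielding the useless $d(Y,\ft)-d(Y,\ft)=0$. The paper's proof avoids all of this by acknowledging that $\Sigma$ is non-orientable from the outset and citing \cite[Theorem~A]{levine2015nonorientable} as a black box: that theorem already packages the cobordism, the circle-bundle boundary, and the anti-diagonal \spinc analysis into the single inequality $h\ge 2\Delta$.
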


\begin{proof}
Consider a sphere $\Sigma'' = S^2\times\{1\}$, and suppose that it meets $K$ transversely $h := \geomi(K)$ times.
Note that $h$ is even, since $\geomi(K) \equiv w \equiv 0 \pmod 2$ by assumption.

By tubing along $K$ we can construct a surface $\Sigma'\subset \es\setminus K$ from $\Sigma''$ by tubing along $K$ as in the proof of Theorem~\ref{t:geomlowerbound}.
(Note that we are using in a crucial way that the class of $K$ is even.)
Here, however, $\Sigma'$ will be non-orientable, and $b_1(\Sigma_1; \F_2) = h$;
$h$ is referred to as the non-orientable genus of $\Sigma'$.

Since $\Sigma'$ lives in the complement of $K$, we can view $\Sigma'$ as lying in any surgery along $K$, and in particular in $Y$.
As we did above, we push it in $Y\times I$ at level $1/2$, obtaining $\Sigma \subset Y\times I$.

We can now apply~\cite[Theorem A]{levine2015nonorientable}, which asserts that $h \ge 2\Delta$, where $2\Delta$ is exactly the right-hand side of the inequality we want to prove.
Since $h = \geomi(K)/2$, we are done.
\end{proof}

We note here that, in the notation of the proof above, \cite[Theorem A]{levine2015nonorientable} also asserts that $2h \ge 4\Delta + |e(\Sigma)|$, where $e(\Sigma)$ is the Euler number of $\Sigma$;
however, since $\Sigma$ lives in $Y\times\{1/2\}$, it is displaceable, and in particular $e(\Sigma) = 0$.
In particular, the seemingly stronger inequality does not give a better lower bound.

\section{Examples}\label{sec:esempi}

This section is devoted to some sample computations of the obstruction from Equation~\eqref{e:geomlowerbound}.
After warming up with a baby-case, we obtain an example where the lower bound~\ref{t:geomlowerbound} is sharp and non-trivial, while Schneiderman's obstruction (whose definition is recalled below) vanishes;
then, we construct an infinite family of knots such that the lower bound of Theorem~\ref{t:geomlowerbound} is sharp and unbounded.

We start by considering the knot $W$ in $\es$ obtained by doing $0$-surgery along one of the components of the Whitehead link. Equivalently $W$ can be thought of as the knotification of the Hopf link.

\begin{figure}[h]
\labellist
\pinlabel $0$ at -30 275
\endlabellist
\includegraphics[width = 0.25\textwidth]{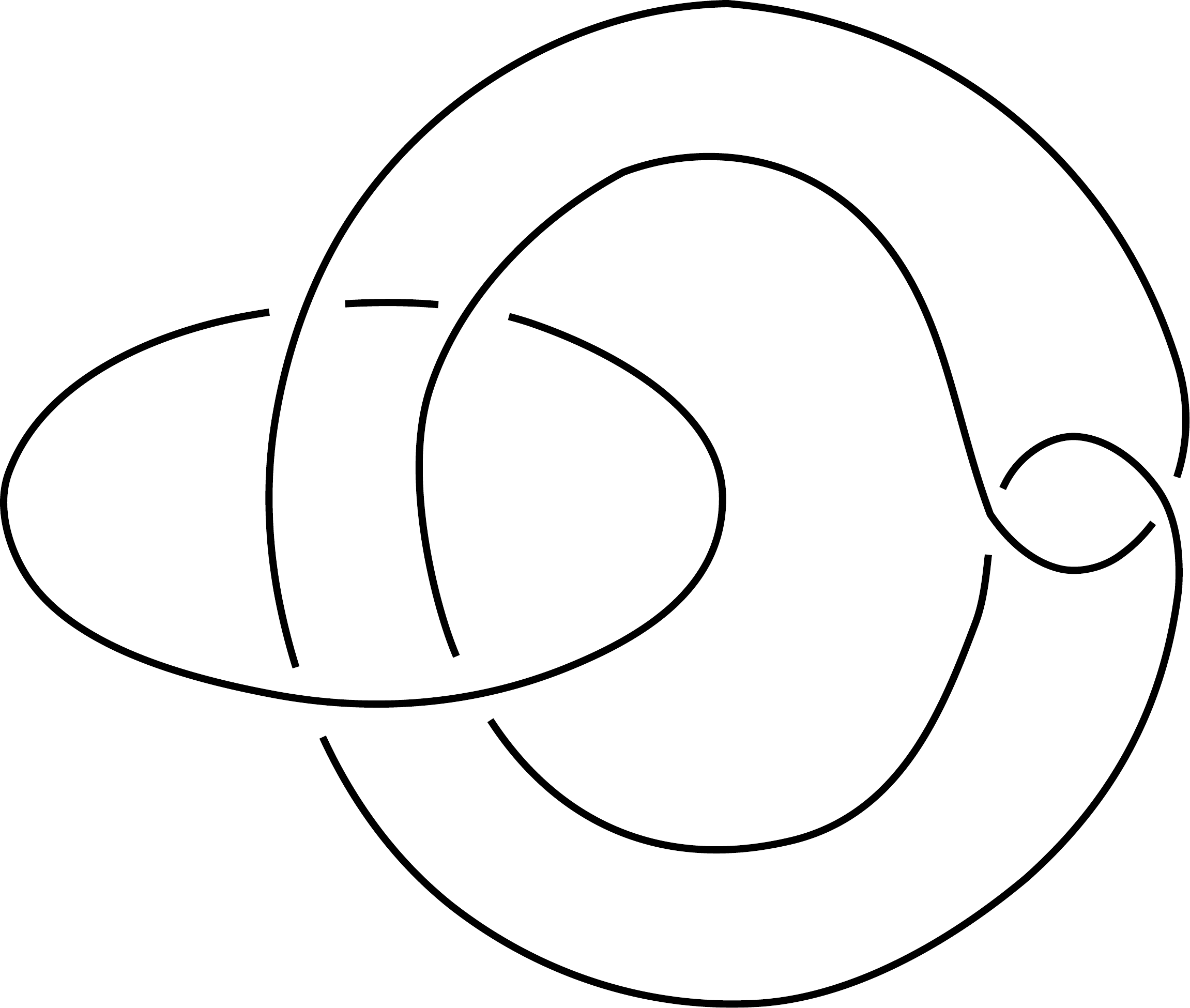}
\caption{The knot $W \subset \es$.}
\label{f:whitehead-link}
\end{figure}

We say that a knot is local if it is contained in a 3-ball in $\es$.

\begin{prop}
The knot $W$ is not concordant to a local knot.
\end{prop}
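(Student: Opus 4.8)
The plan is to apply Theorem~\ref{t:geomlowerbound} (or, more directly, Corollary~\ref{c:surgery}) to the knot $W \subset \es$ and show that the resulting lower bound on $\geomi(W)$ is strictly positive; since a local knot has geometric winding number $0$, and $\geomi$ is a concordance invariant, this will prove that $W$ is not concordant to a local knot. Concretely, first I would identify the 3-manifold $Y_{W,1}$ obtained by $+1$-surgery along $W$. Since $W$ arises from $0$-surgery on one component of the Whitehead link, Kirby calculus on the two-component link (a $0$-framed unknot together with the other Whitehead component, then an additional $+1$-framed handle along $W$) should exhibit $Y_{W,1}$ as $0$-surgery on some knot $J \subset S^3$; the natural candidate, given that $W$ is the knotification of the Hopf link and that $+1$-surgery interacts with the clasp, is (up to mirroring) the trefoil, i.e.\ $J = T_{2,3}$ or its mirror. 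I would carry out this blow-down/slam-dunk computation carefully to pin down $J$.

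Once $Y_{W,1} = S^3_0(J)$ is established, Corollary~\ref{c:surgery} gives
\[
\left\lceil \frac{\geomi(W)}{4} \right\rceil \ge V_0(J) + V_0(-J).
\]
For $J$ the right-handed trefoil, $V_0(T_{2,3}) = 1$ by Equation~\eqref{e:semigroup} (the semigroup $\Gamma_{2,3} = \{0,2,3,4,\dots\}$ has exactly one element, namely $0$, in $[0,g(T_{2,3})) = [0,1)$), while $V_0(-T_{2,3}) = 0$ since negative torus knots have vanishing $V_0$. Hence the right-hand side equals $1$, forcing $\geomi(W) \ge 1$, in fact $\geomi(W) \ge 1$ upgrades to $\geomi(W) \ge 2$ once one recalls $\geomi$ of a knotification of a $2$-component link is even (or simply that $\geomi(W)\le 2$ from the knotification description, so $\geomi(W) = 2$ and the bound is sharp). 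Since $\geomi$ is invariant under concordance in $\es$ (the right-hand side of~\eqref{e:geomlowerbound} is a homology-cobordism invariant, as noted in the proof of Theorem~\ref{t:geomlowerboundthurston}) and vanishes for local knots, $W$ cannot be concordant to a local knot.

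The main obstacle I anticipate is the Kirby-calculus identification of $Y_{W,1}$: one must be scrupulous about orientations and framings when blowing down the $0$-framed unknot and sliding handles, because the answer (trefoil versus its mirror, and hence whether $V_0(J)$ or $V_0(-J)$ is the nonzero term) depends on these signs, and getting it backwards would make the bound vanish. A clean way to organize this is to start from the surgery diagram for $\es$ as $0$-surgery on $\bigcirc$, draw $W$ as the image of the second Whitehead component, add the $+1$-framed $2$-handle, and then do a slam-dunk (or a single blow-down of a $\pm1$-framed component produced along the way) to reduce to a one-component diagram in $S^3$; alternatively, one can use the known fact that $+1$-surgery on the Whitehead double pattern relates to the trefoil. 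I would double-check the outcome against the symmetry that $-Y_{W,1}$ should be $0$-surgery on the mirror, which it is automatically once the computation is done for $Y_{W,1}$.
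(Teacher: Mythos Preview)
Your proposal is correct and follows essentially the same approach as the paper: identify $Y_{W,1}$ as $S^3_0(T)$ for $T$ the trefoil, compute $V_0(T)=1$ and $V_0(-T)=0$, and apply Corollary~\ref{c:surgery} to obtain a positive lower bound on $\geomi(W)$, which precludes concordance to a local knot. One minor wording issue: it is not $\geomi$ itself that is concordance invariant but rather the right-hand side of~\eqref{e:geomlowerbound}; your parenthetical already supplies the correct justification, so the argument stands.
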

\begin{proof}
Obviously, a knot $K$ is a local knot if and only if $\geomi(K) = 0$.
As noted above, our lower bound on $\geomi$ is a concordance invariant, therefore it suffices to prove that the lower bound for $W$ does not vanish.
To this end, we observe that $+1$-surgery along $W$ yields the 3-manifold obtained as $0$-surgery along the trefoil knot $T$.
Since $V_0(T) = 1$, $V_0(-T) = 0$, by Corollary~\ref{c:surgery} the lower bound on $\geomi(W)\ge 2$, and therefore $W$ is not concordant to a local knot.
\end{proof}

\subsection{Comparing with Schneiderman's bound}

We recall the construction of Schneiderman's invariant $\mu$ from~\cite{schneiderman2003algebraic}.
While his setup is more general, we restrict to the case of knots in $\es$.
Here, the invariant of a null-homologous knot $K\in \es$ takes the form of a polynomial $\mu(K) \in t\cdot\Z[t]$.
It is an invariant of (locally flat) topological concordance, and it can be computed in the following way.

Since $K$ is null-homologous, there is a regular homotopy of $K$ to the unknot.
This gives rise to an immersed disc $j: D \looparrowright \es\times I$;
generically, such a disc will have only double points.
To each double point $p$ correspond a sign $\sigma(p)$ and a generator $\gamma(p)$ of $\pi_1(j(D))$, determined up to inverse.
The generator $\gamma(p)$, in turn, gives a homotopy class in $\es\times I$, and hence an element $w(p)$ in $\pi_1(\es\times I) = \Z$, well-defined up to sign.
The invariant $\mu(K)$ is computed as
\[
 \mu(K) = \sum_{p \,|\,w(p)\neq 0} \sigma(p)\cdot t^{|w(p)|}. 
\]
The following proposition was suggested to us by Mark Powell.
\begin{prop}
The degree of $\mu(K)$ gives a lower bound for $\geomi(K)$. More precisely, 
\[
\geomi(K) \ge 2\deg\mu(K). 
\]
\end{prop}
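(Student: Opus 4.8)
The plan is to relate Schneiderman's invariant, computed from an immersed concordance from $K$ to the unknot, to a count of geometric intersections of $K$ with a sphere generating $H_2(\es)$. First I would unwind the definition: given $K\subset\es$, a regular homotopy to $\bigcirc$ produces an immersed disc $j:D\looparrowright\es\times I$ with double points $p$, each carrying a sign $\sigma(p)$ and a weight $w(p)\in\pi_1(\es\times I)=\Z$; the invariant is $\mu(K)=\sum_{w(p)\neq 0}\sigma(p)t^{|w(p)|}$. The key observation is that the weight $w(p)$ measures how many times the loop $\gamma(p)$ crosses the sphere $S\times\{t_0\}$ for a generic level, i.e.\ it is an intersection number with the preimage under $j$ of a generating sphere. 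So the degree of $\mu$ is controlled by the maximal such algebraic intersection number of loops built from double points, which in turn is bounded by the number of times $K$ itself must cross a generating sphere during the homotopy — and, via a standard innermost-disc/minimality argument, by $\geomi(K)$.

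More concretely, the key steps in order would be: (1) fix a sphere $S$ realising $\geomi(K)$, so $|K\cap S|=\geomi(K)=:2g$, and observe that $S$ persists as a sphere $S\times I$ (after a small perturbation, a properly embedded copy of $S\times I$, or a sphere pushed to an interior level) inside $\es\times I$ meeting the immersed disc $j(D)$ in a 1-manifold; (2) arrange $j$ so that it meets $S\times I$ transversely, and note that the algebraic (hence geometric-modulo-cancellation) count of $j(D)\cap (S\times I)$ along any loop representing $\gamma(p)$ equals $w(p)$; (3) bound $|w(p)|$ by the total number of intersection arcs of $j(D)$ with $S\times I$ that can contribute to a single double-point loop, which is at most $g$ after cancelling intersection arcs with opposite sign using innermost-disc surgery on $D$ (this surgery does not change $\gamma(p)$ up to homotopy in $\es\times I$ and does not change $\mu(K)$); (4) conclude $\deg\mu(K)=\max_{w(p)\neq 0}|w(p)|\le g=\tfrac12\geomi(K)$, which rearranges to the claimed inequality $\geomi(K)\ge 2\deg\mu(K)$.

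The main obstacle I expect is step (3): controlling $|w(p)|$ by $g$ rather than merely by $2g$ or by something larger. One has to be careful that the loop $\gamma(p)\subset j(D)$, which passes through the double point $p$, only ``uses'' the intersection data of $D$ with $S\times I$ in a controlled way. The clean way to see this is to realise the regular homotopy so that $D$ intersects $S\times I$ in exactly $g$ simple closed curves or, better, to choose the homotopy so that the trace of $K$ crosses $S\times I$ a total of $2g$ times with the crossings cancelling in pairs; then any loop on $D$ has algebraic intersection with $S\times I$ bounded by $g$. Making this rigorous requires an explicit model of the regular homotopy near $S$ — essentially pushing the $g$ positive and $g$ negative strands of $K\cap S$ past each other — and checking the double points so created contribute weights of absolute value at most $g$; a figure would make this transparent. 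I would also note that since $\deg\mu(K)$ is a topological concordance invariant and $\geomi$ of a concordant knot can only be smaller on the left after minimising, the inequality is in fact a topological concordance bound, consistent with the paper's framing.
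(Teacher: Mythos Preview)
Your overall strategy---choose a specific immersed disc to compute $\mu(K)$ and then bound each $|w(p)|$ by $\geomi(K)/2$---is correct and is also what the paper does. However, your step~(3) is a genuine gap, as you yourself suspect. Working with an abstract disc and invoking ``innermost-disc surgery'' on an \emph{immersed} surface is delicate: such surgeries can create or destroy double points, and there is no clear reason the resulting disc has $|w(p)|\le g$ for every double point. Your fallback suggestion of ``pushing the $g$ positive and $g$ negative strands past each other near $S$'' is also not quite what is needed, and you would still have to analyse all double points, not just those created near~$S$.

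The paper closes this gap with a much simpler choice of disc and a one-line pigeonhole argument. Represent $K$ by a diagram (knot plus dotted $0$-framed unknot), and use a regular homotopy coming from a sequence of crossing changes of $K$ with itself that unknots it. For such a disc, the double point $p$ associated to a crossing change sits at the crossing, and the loop $\gamma(p)$ is---up to sign---one of the two arcs of $K$ determined by that crossing. Now pick a sphere $S$ realising $\geomi(K)$. The two arcs together comprise all of $K$, so their geometric intersections with $S$ sum to $\geomi(K)$; hence one arc meets $S$ at most $\geomi(K)/2$ times, and since $\gamma(p)$ is only defined up to inverse you may take that arc. Its algebraic intersection with $S$, which is $w(p)$, therefore satisfies $|w(p)|\le\geomi(K)/2$. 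This gives $\deg\mu(K)\le\geomi(K)/2$ directly, with no surgery or transversality arguments on the disc. The point you were missing is that for \emph{this} disc the loops $\gamma(p)$ are literally arcs of $K$ itself, so the bound on $|w(p)|$ comes for free from the geometry of $K\cap S$.
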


\begin{proof}
Choose a representation of $K$ as one component of a 2-component link, one of whose component is a dotted unknot; since $K$ is null-homologous, there is a sequence of crossing changes in this projection, involving only crossings of $K$ with itself, that changes the link to the unlink.
Associated to this sequence of crossing changes, comes a regular homotopy from $K$ to the unknot in $\es$, and a corresponding immersed disc $D$.
We use this disc $D$ to compute $\mu(K)$.

The loop corresponding to a double point arising from a crossing change consists in following the knot around, until we return to the double point.
The inverse loop is just the loop obtained by following the knot in the other direction.

Since $K$ intersects a 2-sphere $\geomi(K)$ times, one of the two loops will meet the two spheres at most $\geomi(K)/2$ times, and hence $|w(p)| \le \geomi(K)/2$.
Therefore, the degree of $\mu(K)$ is at most $\geomi(K)/2$.
\end{proof}

We now give a general computation of $\mu(K)$ for a very special family of knots.
All Whitehead and Bing doubling operations will be positively clasped and untwisted.
Fix a knot $T$, and let $W$ be its Whitehead double; let also $L = L_1\cup L_2$ and $J$ be the Bing and Whitehead double of $W$, respectively.
An observation that will be useful later is that $L$ is a symmetric link; i.e. there is an isotopy exchanging the two components.

Finally, let $K\subset \es$ be obtained by doing 0-surgery along $L_2$, as shown in Figure~\ref{f:bingwhitehead}.

\begin{figure}[h]
\labellist
\pinlabel $T$ at 184 70
\pinlabel $0$ at 233 266
\pinlabel $K$ at -25 138
\endlabellist
\includegraphics[width = 0.4\textwidth]{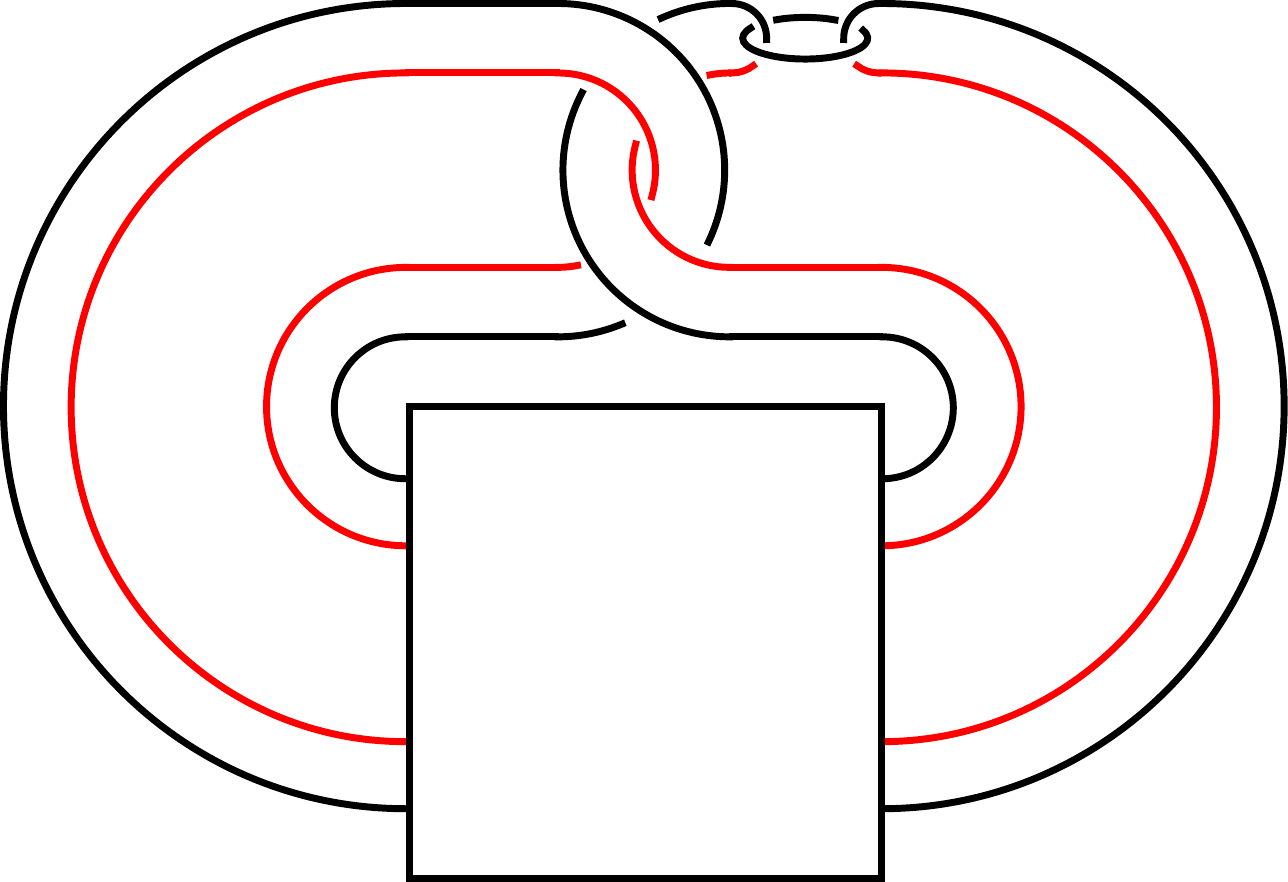}
\caption{The knot $K$. The two arcs $a_1$ and $a_2$ are in black and red respectively.}\label{f:bingwhitehead}
\end{figure}

The following lemma was suggested to us by JungHwan Park.

\begin{lemma}\label{l:schneiderman}
The knot $K$ described above has $\mu(K) = 0$.
\end{lemma}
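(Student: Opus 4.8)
The plan is to compute $\mu(K)$ directly from a convenient immersed disc, exploiting the symmetry of the Bing double $L = L_1 \cup L_2$. First I would fix the band presentation of $K$ coming from Figure~\ref{f:bingwhitehead}: start from the $2$-component link $L_2 \cup \bigcirc$ (with $\bigcirc$ the dotted unknot on which we do $0$-surgery), and identify an explicit finite sequence of self-crossing changes of $L_2$ that unknots it in the complement of the dotted circle. Since $L_2$ is a Whitehead-type double of $W$, there is a standard such sequence: changing the two clasp crossings of the doubling pattern unlinks $L_2$ from a parallel copy of $W$ and then unknots it; the point is that this sequence can be chosen so that the double points come in pairs arising from the two clasp crossings of the (positively clasped, untwisted) doubling operations at each stage.

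The key step is to analyse, for each double point $p$ produced this way, the winding number $w(p) \in \pi_1(\es \times I) = \Z$ of the associated loop $\gamma(p)$. Here I would use that $K$ is null-homologous in $\es$, so the total algebraic intersection of $K$ with the sphere $S^2 \times \{*\}$ is zero, and that the loop $\gamma(p)$ obtained by following $K$ from $p$ back to $p$ closes up inside $j(D)$. Because each crossing change involves only self-crossings of a \emph{single} Whitehead/Bing doubling pattern, and these patterns have winding number zero in their companion solid tori, each resulting loop $\gamma(p)$ is homotopically trivial in $\es \times I$, i.e.\ $w(p) = 0$. Concretely: the arcs $a_1$ and $a_2$ in Figure~\ref{f:bingwhitehead} bound the two halves of $K$ cut at a clasp double point, and each of them can be isotoped into a $3$-ball (the companion torus is itself contained in a ball because the pattern has zero winding number), so $w(p) = 0$ for every double point $p$. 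Feeding this into the formula $\mu(K) = \sum_{p \mid w(p) \neq 0} \sigma(p)\, t^{|w(p)|}$ gives an empty sum, hence $\mu(K) = 0$.

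Alternatively, and perhaps more cleanly, I would use the symmetry of $L$: there is an isotopy of $\es$ (or of $S^3$, before surgery) exchanging $L_1$ and $L_2$. Since $K$ is obtained from the symmetric pair $(L, \bigcirc)$ by surgery on one component, one can arrange a regular homotopy and an immersed disc $D$ which is itself symmetric under an orientation-reversing symmetry that pairs up double points $p \leftrightarrow \bar p$ with $\sigma(\bar p) = -\sigma(p)$ and $|w(\bar p)| = |w(p)|$; the contributions then cancel in pairs and $\mu(K) = 0$. I expect the main obstacle to be bookkeeping: writing down an \emph{explicit} unknotting sequence for $L_2$ in the complement of the dotted circle and verifying, double point by double point, that the associated loops are nullhomotopic in $\es \times I$ (equivalently, that each double point's companion torus sits in a ball). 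Once that combinatorial/diagrammatic check is done — which is routine given that all doublings are untwisted and have winding number zero — the conclusion $\mu(K) = 0$ is immediate from Schneiderman's formula.
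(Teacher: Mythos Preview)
Your proposal has a genuine gap, and it starts with a misreading of the setup. In the paper, the surgery is performed on $L_2$, so the dotted circle \emph{is} $L_2$ and the knot $K$ living in $S^2\times S^1$ is $L_1$; there is no separate unknot $\bigcirc$. Thus the crossing changes you need are self-crossings of $L_1$, not of $L_2$, and $L_1$ is one component of a Bing double of $W$, not a Whitehead-type double.

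More seriously, the core claim of your first approach---that every double point has $w(p)=0$---is not justified and the argument you give for it is incorrect. You assert that the relevant arcs can be isotoped into a $3$-ball because ``the companion torus is itself contained in a ball''. But the companion solid torus $\nu(W)$ contains $L_2$, and it is precisely on $L_2$ that the $0$-surgery is performed; whatever balls you find in $S^3$ do not survive to $S^2\times S^1$. Concretely, the two arcs $a_1,a_2$ of $K$ lie on opposite sides of the generating sphere (the cocore of the surgery), so any crossing between a strand of $a_1$ and a strand of $a_2$ produces a loop that meets the sphere algebraically once and hence has $w(p)=\pm 1$, not $0$. You have not exhibited an unknotting sequence that avoids such mixed crossings, and there is no reason to expect one exists.

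The paper proceeds quite differently: it lifts an unknotting sequence for $T$ to one for $K$ (each crossing of $T$ becoming sixteen crossings of $K$), accepts that the mixed crossings contribute $\pm t$, and then checks by direct inspection of Figure~\ref{f:multicrossing} that among these sixteen crossings exactly four mixed ones are positive and four are negative, so their contributions cancel. The vanishing of $\mu(K)$ comes from this signed cancellation, not from all $w(p)$ being zero. Your alternative ``symmetry'' approach is too vague to assess: the symmetry of $L$ exchanges $L_1$ with $L_2$, but after surgery on $L_2$ this is no longer an ambient symmetry of $(S^2\times S^1,K)$, and you have not constructed any involution on a specific immersed disc.
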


\begin{proof}
The projection of $L_1$ is split into two arcs by the projection of $L_2$;
this divides $K$ into two arcs, $K = a_1 \cup a_2$ (these are displayed in red and black in Figure~\ref{f:bingwhitehead}).

Suppose that we have an unknotting sequence of $u$ crossing changes for $T$.
This corresponds to an unknotting sequence for $K$ comprising $16u$ crossing changes.
These crossing changes give an immersed disc in $\es\times I$, which we will use to compute $\mu(K)$;
we will show that each crossing change in $T$ corresponds to a trivial contribution from the corresponding sixteen crossing changes in $K$.

To this end, refer to Figure~\ref{f:multicrossing}.
A crossing change can be between a strand in $a_1$ and a strand in $a_2$, or between two strands on the same arc, say $a_1$.
In the latter case, we can connect the two lifts of the double point by an arc in $a_1$, and the corresponding loop is null-homotopic in $\es\times I$, so it does not contribute to $\mu(K)$.
Vice versa, if the two strands belong to two different arcs, when we connect them we cross a generating 2-sphere exactly once, and hence $w(p) = \pm 1$;
that is, each of the corresponding double points contributes with $\sigma(p)\cdot t$.

By counting directly around each crossing of $K$, as in Figure~\ref{f:multicrossing}, we see that there are four positive and four negative crossings, corresponding to four positive and four negative points in the immersed concordance.
Thus, the total contribution vanishes, and $\mu(K) = 0$.
\end{proof}

\begin{figure}
\labellist
\pinlabel $\leadsto$ at 195 90
\pinlabel $-$ at 323 129
\pinlabel $-$ at 323 45
\pinlabel $+$ at 306 112
\pinlabel $+$ at 306 63
\pinlabel $-$ at 371 129
\pinlabel $-$ at 371 45
\pinlabel $+$ at 386 112
\pinlabel $+$ at 386 63
\endlabellist
\includegraphics[scale = 0.6]{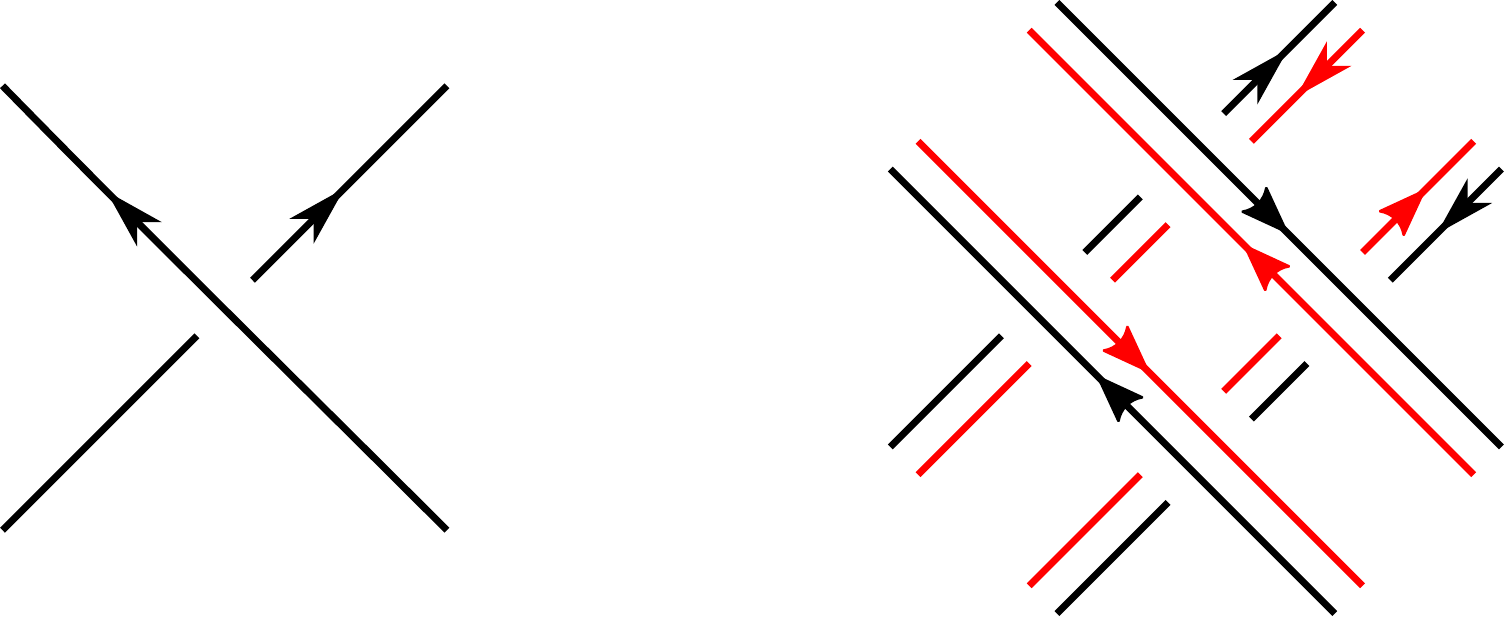}
\caption{The multiplication of crossings: on the left, a crossing in $T$; on the right, the corresponding sixteen crossings in $K$. We labelled the positive and negative crossings of `mixed types' (i.e. those where two strands of different colour meet).}\label{f:multicrossing}
\end{figure}

We now look at the 3-manifold $Y_K$ obtained as $+1$-surgery along $K$.
That is, $Y_K$ is obtained by doing $+1$-surgery on $L_1$ and $0$-surgery on $L_2$;
since $L$ is a symmetric link, we can blow down $L_1$, and the blowdown of $L_2$ will be $J$, the Whitehead double of $W$.
Therefore, we are in the assumption of Corollary~\ref{c:surgery}, and we want to compute $V_0(J)$ and $V_0(-J)$.

\begin{lemma}\label{l:bingwhitehead}
If the maximal Thurston--Bennequin number of $T$ is positive, then $V_0(J) = 1$.
\end{lemma}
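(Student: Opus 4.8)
The plan is to reduce the computation of $V_0(J)$ for the Whitehead double $J$ of $W$ to a statement about the $\tau$-invariant (or equivalently about $V_0$) of Whitehead doubles of knots with positive maximal Thurston--Bennequin number. Recall that $J$ is null-homologous in $S^3$ once we blow down $L_1$, so $V_0(J)$ is the ordinary concordance-type invariant from Theorem~\ref{t:NiWu}. Since $V_0$ of a knot is $1$ if $\tau > 0$ (more precisely if the knot is not slice-obstructed in the relevant way) and $V_0 = 0$ exactly when $\tau \le 0$, the key input is Hedden's theorem on Whitehead doubles: the untwisted, positively-clasped Whitehead double $J = \mathrm{Wh}^+(W)$ has $\tau(J) = 1$ if and only if $\tau(W) > 0$ (and $\tau(J) = 0$ otherwise). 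So the first step is to show $\tau(W) > 0$, where $W$ itself is the Whitehead double of $T$.

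For that I would invoke the same circle of ideas one step down: by Hedden's theorem again, $\tau(W) = \tau(\mathrm{Wh}^+(T)) = 1$ provided $\tau(T) > 0$. But we are only assuming something about the \emph{maximal Thurston--Bennequin number} $\overline{tb}(T)$, not directly about $\tau(T)$. The bridge here is the slice--Bennequin inequality, which gives $\tau(T) \ge (\overline{tb}(T) + 1)/2$; hence $\overline{tb}(T) > 0$, i.e. $\overline{tb}(T) \ge 1$, forces $\tau(T) \ge 1 > 0$. Actually it is cleaner to use the sharper statement for Whitehead doubles due to Hedden (building on Lisca--Stipsicz and others): $\overline{tb}(T) \ge 1$ already guarantees $\tau(\mathrm{Wh}^+(T)) = 1$ directly, without passing through $\tau(T)$. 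Either way, $\tau(W) = 1$.

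Feeding this back in: since $\tau(W) = 1 > 0$, Hedden's theorem gives $\tau(J) = 1$, and therefore $V_0(J) = 1$ by the standard relationship between $V_0$ and $\tau$ for $\tau > 0$ knots (namely $V_0 = 1$ when $0 < \tau \le$ some bound governed by the genus; here $\tau(J) = 1$ pins it down exactly). This completes the proof.

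The main obstacle, and the point that needs care in writing this up, is making the chain of implications $\overline{tb}(T) > 0 \Rightarrow \tau(W) = 1 \Rightarrow \tau(J) = 1 \Rightarrow V_0(J) = 1$ fully rigorous: in particular one must cite the correct form of Hedden's theorem (it computes $\widehat{HFK}$ of Whitehead doubles in the top Alexander grading and extracts $\tau$ from it) and be precise about the twisting and clasp conventions, which the paper has fixed to be untwisted and positively clasped — exactly the case Hedden treats. A secondary subtlety is the passage from the knot $J\subset S^3$ obtained by blowing down $L_1$ to the surgery description $Y_K = S^3_{+1}(K) = S^3_0(J)$; this was already established in the paragraph preceding the lemma using the symmetry of the Bing double $L$, so here one just needs to confirm that the blowdown indeed produces $J$ with the standard orientation and framing so that $V_0(J)$ is the relevant quantity in Corollary~\ref{c:surgery}.
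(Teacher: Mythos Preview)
Your argument is essentially correct, but it takes a different route from the paper's, and one step deserves tightening.

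The paper obtains the upper bound $V_0(J)\le 1$ from the observation that $J$ is unknotted by a single positive-to-negative crossing change (at the clasp), invoking the crossing-change bound of Borodzik--Hedden. For the lower bound $V_0(J)\ge 1$, the paper does \emph{not} appeal to Hedden's computation of $\tau$ for Whitehead doubles; instead it uses the more elementary Legendrian propagation due to Rudolph: if $T$ has a Legendrian representative with $tb\ge 1$, then so does its untwisted positive Whitehead double $W$, and hence so does $J=\mathrm{Wh}^+(W)$. One application of the slice--Bennequin inequality at the very end then gives $\tau(J)>0$, and Rasmussen's thesis converts this to $V_0(J)>0$. Your approach replaces the Rudolph step by two invocations of Hedden's theorem $\tau(\mathrm{Wh}^+(K))=1 \Leftrightarrow \tau(K)>0$, together with slice--Bennequin applied to $T$ to start the chain. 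Both work; the paper's version is lighter on machinery, while yours is perhaps more conceptual if one already has Hedden's result in hand.

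The one point you should make precise is the upper bound. The assertion ``$\tau(J)=1$ pins it down exactly'' is not a general fact: $\tau(K)=1$ does not by itself force $V_0(K)=1$. What actually gives $V_0(J)\le 1$ here is that $J$, being an untwisted Whitehead double, has Seifert genus $1$, hence $g_4(J)\le 1$, and $V_0(K)\le g_4(K)$ in general (equivalently $\nu^+(J)\le g_4(J)\le 1$ and $V_0\le\nu^+$). State this explicitly and your proof is complete.
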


\begin{proof}
By construction, $J$ has unknotting number 1 (by changing a crossing in the clasp);
more precisely, once can change a positive crossing into a negative one, and obtain an unknot.
Therefore, $V_0(-J) = 0$, and $V_0(J) \le 1$, by~\cite[Theorem 6.1]{BorodzikHedden};
to prove that $V_0(J) = 1$, we use the slice Bennequin inequality~\cite{Olga}:
namely, it is well-known that $W$ has a Legendrian representative with Thurston--Bennequin number 1, and that (untwisted, positively clasped) Whitehead doubling preserves this property~\cite{Rudolph};
hence, also $J$ has such a Legendrian representative, and this proves that $\tau(J) > 0$, which in turn proves that $V_0(J) > 0$~\cite[Proposition 7.7]{rasmussen2003floer}.
\end{proof}

Let now $T$ be any knot satisfying the assumption of Lemma~\ref{l:bingwhitehead}; for instance, $T$ can be chosen to be the right-handed trefoil.

\begin{prop}
The Schneiderman invariant $\mu(K)$ of $K$ vanishes, but $\geomi(K) = 2$.
\end{prop}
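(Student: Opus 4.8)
The plan is to combine the lower bound from Corollary~\ref{c:surgery} with an explicit upper bound on $\geomi(K)$ coming from the picture in Figure~\ref{f:bingwhitehead}. For the lower bound, recall that $Y_K = S^3_0(J)$, where $J$ is the Whitehead double of $W$, so Corollary~\ref{c:surgery} gives $\lceil \geomi(K)/4\rceil \ge V_0(J) + V_0(-J)$. By Lemma~\ref{l:bingwhitehead} we have $V_0(J) = 1$ (here we use that $T$ is chosen so that its maximal Thurston--Bennequin number is positive, e.g.~the right-handed trefoil), and the proof of that lemma also records $V_0(-J) = 0$. Hence $\lceil \geomi(K)/4 \rceil \ge 1$, which forces $\geomi(K) \ge 1$; since $\geomi(K)$ is even (the class of $K$ is null-homologous and $K$ is obtained by $0$-surgery on $L_2$, so the parity argument of the previous sections applies), in fact $\geomi(K) \ge 2$.

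For the matching upper bound, I would exhibit a generating $2$-sphere in $\es$ meeting $K$ in exactly two points. Indeed, $K$ sits inside a solid torus which is a tubular neighbourhood of a curve winding once around the $S^1$-factor of $\es$ away from the $0$-framed $2$-handle region: the two arcs $a_1$ and $a_2$ of Figure~\ref{f:bingwhitehead} together traverse the "long" direction of $\es$ only once, so a suitably placed meridian sphere of that solid torus intersects $K$ transversely in a single point of each of $a_1$ and $a_2$. This sphere generates $H_2(\es)$, so $\geomi(K) \le 2$. Combining with the lower bound gives $\geomi(K) = 2$. Finally, the vanishing $\mu(K) = 0$ is exactly the content of Lemma~\ref{l:schneiderman}, so the proposition follows.

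The only genuinely delicate point is the upper bound $\geomi(K) \le 2$, since it requires reading off from the diagram that $K$ really does wrap only twice around the $\es$ direction. This is where I would be most careful: one must check that the Bing- and Whitehead-doubling operations used to build $L$ from $W$, and the satellite structure around $T$, are all supported inside a ball (or inside a solid torus that is itself contained in a ball disjoint from the surgery curve), so that the only "essential" wrapping comes from the two arcs crossing the surgered region. Given the explicit picture in Figure~\ref{f:bingwhitehead}, this is a routine but necessary verification. Everything else is a direct citation of results already in hand: Corollary~\ref{c:surgery}, Lemma~\ref{l:bingwhitehead}, Lemma~\ref{l:schneiderman}, and the parity observation.

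\begin{proof}
By Lemma~\ref{l:schneiderman} we have $\mu(K) = 0$, so it remains to prove $\geomi(K) = 2$.

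As observed before Lemma~\ref{l:bingwhitehead}, the manifold $Y_K$ obtained by $+1$-surgery along $K$ is $S^3_0(J)$, where $J$ is the Whitehead double of $W$; hence Corollary~\ref{c:surgery} applies and gives
\[
\left\lceil\frac{\geomi(K)}4\right\rceil \ge V_0(J) + V_0(-J).
\]
Since $T$ satisfies the hypothesis of Lemma~\ref{l:bingwhitehead}, that lemma and its proof give $V_0(J) = 1$ and $V_0(-J) = 0$, so $\lceil\geomi(K)/4\rceil \ge 1$ and therefore $\geomi(K) \ge 1$. As $K$ is null-homologous in $\es$, its geometric intersection with any generating sphere is even, so $\geomi(K) \ge 2$.

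For the reverse inequality, note that $K$ is contained in a solid torus $V \subset \es$ that is a neighbourhood of a circle crossing the $0$-framed $2$-handle region exactly once; concretely, the two arcs $a_1$ and $a_2$ of $K$ in Figure~\ref{f:bingwhitehead} each traverse this region once, and all the doubling and satellite operations producing $K$ are supported in a ball disjoint from it. A meridian disc of $V$ therefore caps off to a sphere $S$ generating $H_2(\es)$ with $|K\cap S| = 2$, so $\geomi(K) \le 2$. Combining the two inequalities, $\geomi(K) = 2$.
\end{proof}
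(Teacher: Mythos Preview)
Your proof is correct and follows the same route as the paper: Lemma~\ref{l:schneiderman} for $\mu(K)=0$, Corollary~\ref{c:surgery} together with Lemma~\ref{l:bingwhitehead} for the lower bound, and the obvious sphere hitting $K$ twice for the upper bound. You are in fact slightly more careful than the paper in making the parity step $\geomi(K)\ge 1 \Rightarrow \geomi(K)\ge 2$ explicit; for the upper bound your solid-torus description is a bit roundabout (the paper just says ``evidently''), and it is cleaner to observe directly that the Seifert disc of the $0$-framed unknot $L_2$ caps off to a generating sphere meeting $K=L_1$ in exactly the two points separating $a_1$ from $a_2$.
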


\begin{proof}
The Schneiderman invariant $\mu(K)$ vanishes, thanks to Lemma~\ref{l:schneiderman}.

Evidently, $\geomi(K) \le 2$.
The converse inequality follows from Corollary~\ref{c:surgery} and Lemma~\ref{l:bingwhitehead}:
indeed, doing $+1$-surgery along $K$ yields 3-manifold $Y$ that is obtained as $0$-surgery along $J$; by Lemma~\ref{l:bingwhitehead}, $V_0(J) = 1$, so Corollary~\ref{c:surgery} implies $\geomi(K) \ge 2$, as desired.
\end{proof}

Note that by combining~\cite[Corollary 1.3]{cimasoni2006slicing}, and the fact that Whitehead doubles are always topologically slice by a result of Freedman~\cite{FreedmanWhitehead}, we obtain that the knot $K$ is topologically slice in $\es = \partial S^2 \times D^2$. 

In particular this implies that the bound~\eqref{e:geomlowerboundN} detects the difference between topologically and smoothly slice.

\subsection{Sharp, arbitrarily large bounds}

As promised, we construct an infinite family of knots $K_n$, indexed by positive integers;
the knots will be given by the diagram in Figure~\ref{f:ex_diagram}.

\begin{figure}[h]
\labellist
\pinlabel $K_n$ at 120 -1
\pinlabel {{\color{red}$0$}} at 312 158
\pinlabel $1$ at 34 76
\pinlabel $2n+1$ at 373 116
\pinlabel $2n+1$ at 373 44
\endlabellist
\includegraphics[scale = 0.75]{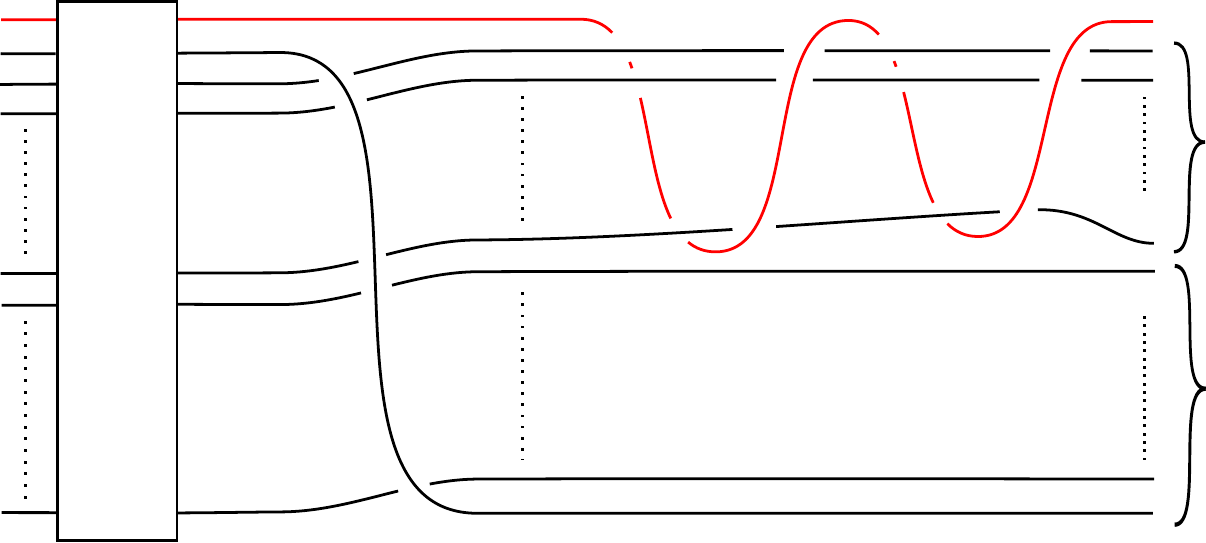}
\caption{The knot $K_n$. The figure represents a $(4n+3)$-braid, the box is a full twist. The closure of the braid has two components, one of which (in red, at the top of the figure) is an unknot, along which we do 0-surgery; the other component, $K_n$, has linking number $0$ with the first component, and hence represents a null-homologous knot in $\es$.}\label{f:ex_diagram}
\end{figure}

\begin{prop}\label{p:esempi}
The knot in $K_n$ has $\geomi(K_n) = 4n+2$.
\end{prop}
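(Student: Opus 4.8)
\textbf{Proof proposal for Proposition~\ref{p:esempi}.}

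The plan is to sandwich $\geomi(K_n)$ between $4n+2$ from above and from below. For the upper bound, I would read it directly off the braid diagram in Figure~\ref{f:ex_diagram}: after doing $0$-surgery on the red unknotted component, a meridian disc of that component caps off to a generating $2$-sphere $\Sigma$ of $H_2(\es)$, and $K_n$ meets $\Sigma$ in exactly the number of strands of the braid that pass through the disc bounded by the red unknot. By construction this count is $4n+2$ (the red unknot encircles $4n+2$ of the $4n+3$ strands, or can be isotoped to do so), so $\geomi(K_n) \le 4n+2$.

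For the lower bound I would invoke the machinery already set up, namely Corollary~\ref{c:surgery}: I must identify the $3$-manifold $Y_{K_n,1}$ obtained by $+1$-surgery along $K_n$ as $0$-surgery along some knot $J_n \subset S^3$, and then bound $V_0(J_n) + V_0(-J_n)$ from below. The key Kirby-calculus step is to blow down the $+1$-framed component: since the two closure components of the braid are arranged symmetrically (the unknot with framing $0$ and the other component with framing $1$), blowing down the $+1$-framed unknot turns the red $0$-framed unknot into a genuine knot $J_n$ in $S^3$, and the full twist in the braid box together with the blow-down is engineered precisely so that $J_n$ is (a connected sum involving) a torus knot of the form $T_{2n+1, \text{something}}$ — the framings $2n+1$ on the two bands and the full twist are exactly what one expects to produce, after blowing down, a knot whose $V_0$ is computed by Lemma~\ref{l:V0} or Lemma~\ref{l:dT3737}. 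I would then plug the resulting value of $V_0(J_n)$ (and observe $V_0(-J_n)$ is small, e.g.\ $J_n$ or its reverse-mirror is unknotting-number-controlled) into~\eqref{e:geomlowerbound1} to get $\lceil \geomi(K_n)/4\rceil \ge V_0(J_n) + V_0(-J_n)$, which should force $\geomi(K_n) \ge 4n+2$ once the arithmetic from Lemma~\ref{l:V0} is substituted. Combining the two bounds gives $\geomi(K_n) = 4n+2$.

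The main obstacle I anticipate is the explicit Kirby-calculus identification of $J_n$: one must carefully track what the $0$-framed red component becomes after blowing down the $+1$-framed component and absorbing the full twist, and verify it is precisely the torus knot (or connected sum of torus knots) for which Lemma~\ref{l:V0} gives $V_0$ large enough to match $4n+2$. A secondary technical point is checking that $V_0$ of the mirror is zero (or otherwise negligible), which should follow from an unknotting-number-one argument analogous to Lemma~\ref{l:bingwhitehead}, or directly from the fact that the relevant torus knot is positive so its mirror has $V_0 = 0$ by Theorem~\ref{t:NiWu} and Equation~\eqref{e:semigroup}. Once those identifications are in place, the numerical bound is immediate from Corollary~\ref{c:surgery} and Lemma~\ref{l:V0}.
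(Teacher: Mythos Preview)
Your upper bound is fine and matches the paper: a meridian disc of the red $0$-framed unknot caps off to a sphere meeting $K_n$ in $4n+2$ points.

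The lower bound, however, has a genuine gap. The link underlying $K_n$ is \emph{not} symmetric: the $0$-framed red component is an unknot, but the other component --- the one that becomes $K_n$ in $\es$ and receives the $+1$-framing --- is the torus knot $J = T_{4n+2,4n+3}$ (this is stated explicitly in the paragraph preceding Lemma~\ref{l:T3737}). So there is no $+1$-framed unknot to blow down, and your proposed Kirby move simply does not exist. You seem to be importing the symmetry trick from the Bing-double example earlier in Section~\ref{sec:esempi}, but that link was genuinely symmetric; this one is not. Consequently Corollary~\ref{c:surgery} is not available, at least not by the route you describe, and your plan to compute $V_0(J_n) + V_0(-J_n)$ for some mystery knot $J_n$ never gets off the ground.

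What the paper actually does is rather different. It abandons $+1$-surgery and instead takes $m = (4n+2)(4n+3)$-surgery on $K_n$, then places $Y_0 = (\es)_m(K_n)$ into a twisted surgery triangle along the \emph{unknotted} component $\bigcirc$ with coefficients $\infty, -1, 0$. The point of the large $m$ is that $Y_\infty = S^3_m(T_{4n+2,4n+3})$ is then an L-space, while $Y_{-1} = S^3_m(J')$ with $J' = T_{2n+1,4n+3}\#T_{2n+1,4n+3}$ (Lemma~\ref{l:T3737}) is Seifert fibred with $\HFo = 0$ (Lemma~\ref{l:T3737surgery}). These two facts force the maps in both the triangle and its orientation-reversed twin to behave rigidly on towers, yielding explicit formulas $\dtw(Y_0,\ft_0) = -2V_0(J') + \tfrac{m-3}{4}$ and $\dtw(-Y_0,\ft_0) = 2V_0(J) - \tfrac{m+1}{4}$. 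The bound that emerges is $2V_0(J) - 2V_0(J')$ --- a \emph{difference} of $V_0$'s of two distinct positive torus knots, not the sum $V_0 + V_0(\text{mirror})$ you were aiming for --- and Lemmas~\ref{l:dT3737} and~\ref{l:V0} then give $2(n+1)$, forcing $\geomi(K_n) \ge 4n+2$.
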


Note that the obstruction of Theorem~\ref{t:geomlowerbound} or Theorem~\ref{t:main} cannot see the difference between $\geomi(K_n) = 4n+2$ or $\geomi(K_n) = 4n+4$;
that is to say, if $\geomi(K_n)$ is in fact equal to $4n+4$, this cannot be detected by our results, which can only guarantee $\geomi(K_n) \ge 4n+2$.
As a consequence, this family consists of pairwise mutually non smoothly almost-concordant knots in the trivial free homotopy class of $\es$; the existence of such a family of knots in arbitrary $3$-manifold was enstablished by Friedl--Nagel--Orson--Powell~\cite[Theorem 1.6]{friedl2016satellites}, and later on by Yildiz~\cite{yildiz2017note}.

The strategy of proof is quite straightforward: we need to exhibit a 2-sphere representing the generator of $H_2(\es)$ that meets $K_n$ in $4n+2$ points, and we want to apply Theorem~\ref{t:geomlowerbound} to some \spinc structure on some positive surgery along $K_n$.
The 2-sphere is in fact easy to spot, as shown in Figure~\ref{f:disco}.

\begin{figure}
\labellist
\pinlabel $K_n$ at 91 6
\pinlabel {{\color{red}$0$}} at 330 157
\pinlabel $1$ at 23 79
\endlabellist
\includegraphics[scale = 0.78]{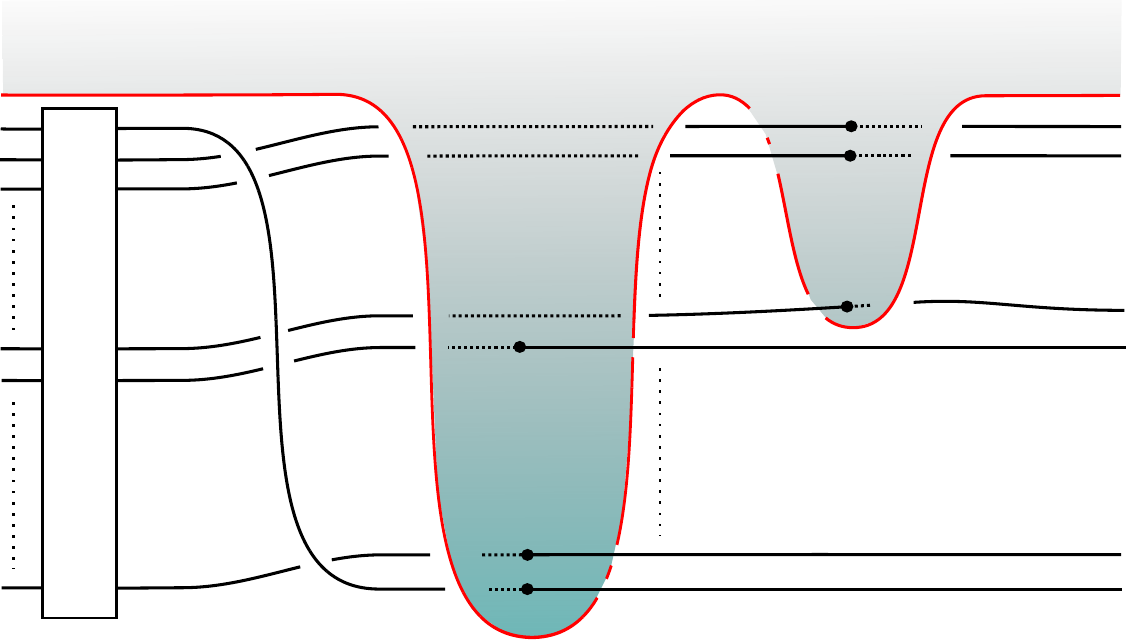}
\caption{This is the knot $K_n$, where we singled out the unknotted strand from the full twist. The disc is shaded, and the points of intersections are marked with a bullet.}\label{f:disco}
\end{figure}

On the other hand, computing correction terms is not an easy task.
For the manifold at hand, that is $m$-surgery on $K_n$ for some $m$, we proceed as follows.
We start with the Kirby diagram for $K_n$ of Figure~\ref{f:disco}, which comprises a 0-framed unknot $\bigcirc$ and a torus knot $J = T_{4n+2,4n+3}$, and we observe that this manifold fits into a triad, corresponding to doing surgery along $\bigcirc$ with coefficients $-1$, $0$, and $\infty$.
Call $(S^3,J')$ the knot obtained by doing $-1$-surgery along $\bigcirc \subset (S^3,J)$.

Doing $\infty$-surgery along $\bigcirc$ gives back $S^3$, together with the knot $J\subset S^3$.
The 3-manifold $S^3_m(J)$ is an L-space when $m\ge (4n+1)(4n+2)-1$, and its correction terms are well understood in terms of the semigroup generated by $4n+2$ and $4n+3$ in the non-negative integers (see~\cite{borodziklivingston}).

The following two lemmas are the key topological observation underpinning the proof of Proposition~\ref{p:esempi}.

\begin{lemma}\label{l:T3737}
The knot $J'$ is $T_{2n+1,4n+3}\# T_{2n+1,4n+3}$.
\end{lemma}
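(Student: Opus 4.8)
The plan is to identify $J'$ by a Kirby-calculus computation directly from the diagram in Figure~\ref{f:disco}. Recall that $J = T_{4n+2,4n+3}$ sits in $S^3$ together with a $0$-framed unknot $\bigcirc$, and that the triad is obtained by varying the surgery coefficient on $\bigcirc$ among $\infty$, $0$, and $-1$; doing $\infty$-surgery gives $(S^3, J)$, so I must compute the image of $J$ after $-1$-surgery on $\bigcirc$. First I would record precisely how $\bigcirc$ links $J$: from Figure~\ref{f:disco}, $\bigcirc$ encircles the $4n+2$ strands of the braid whose closure is $J$, so $\bigcirc$ bounds a disc meeting $J$ in $4n+2$ points; equivalently $J$ is the $(4n+2)$-strand cable pattern running through $\bigcirc$. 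The braid picture displays $J$ as the closure of a full twist on $4n+2$ strands together with the extra crossings coming from the $(2n+1)$-fold twist regions labelled in Figure~\ref{f:ex_diagram}.

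Next I would blow down $\bigcirc$. Performing $-1$-surgery on an unknot $\bigcirc$ that encircles $k$ parallel strands has the effect of adding a single negative full twist to those $k$ strands (and erasing $\bigcirc$), changing $S^3$ back to $S^3$. So $J'$ is obtained from $J$ by inserting one negative full twist on the $4n+2$ strands passing through $\bigcirc$. The key step is then a braid-word identification: the positive full twist already present in $J$ (coming from the box in Figure~\ref{f:ex_diagram}), together with the newly introduced negative full twist, partially cancel, and what remains is a braid whose closure is recognizably a connected sum of two copies of $T_{2n+1,4n+3}$. Concretely, the $4n+2$ strands split as two groups of $2n+1$; after the cancellation the full twist on all $4n+2$ strands is replaced by a full twist on each of the two groups of $2n+1$ strands separately (this is exactly the "symmetric" structure visible in the two labelled $2n+1$ twist regions of Figure~\ref{f:ex_diagram}), and a full twist on $2n+1$ strands closes up to $T_{2n+1,2n+1+?}$; accounting for the remaining $(2n+1)$-twist boxes shows each factor is $T_{2n+1,4n+3}$, and the two groups are split from one another, yielding the connected sum. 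I would present this as an explicit isotopy of the diagram, drawing one or two intermediate pictures (an annulus separating the two $2n+1$-strand cables) rather than manipulating braid words abstractly.

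The main obstacle is bookkeeping the framings and twist counts correctly: one must be careful that the full twist in Figure~\ref{f:ex_diagram} is a twist on all $4n+3$ strands of the braid (one of which is the strand forming $\bigcirc$), so after singling out $\bigcirc$ as in Figure~\ref{f:disco} the twisting on the remaining $4n+2$ strands, combined with the blow-down twist, must be tracked with the right signs; an off-by-one in the twisting would change $T_{2n+1,4n+3}$ to a neighbouring torus knot. I would double-check the final answer by an independent invariant: the genus. We have $g(T_{2n+1,4n+3}) = \frac{(2n)(4n+2)}{2} = 2n(2n+1)$, so $g(J') = 4n(2n+1)$ if $J' = T_{2n+1,4n+3}\#T_{2n+1,4n+3}$; meanwhile one can compute the genus of $J'$ directly from its diagram (e.g.\ via the Seifert algorithm on the braid closure, or from the signature/Alexander polynomial change under the blow-down), and matching these confirms the identification. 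With $J'$ so identified, Lemma~\ref{l:dT3737} then lets one replace $T_{2n+1,4n+3}\#T_{2n+1,4n+3}$ by $T_{2n+1,8n+5}$ for the purposes of computing correction terms via Theorem~\ref{t:NiWu} and Equation~\eqref{e:semigroup}, which is presumably the content of the next lemma.
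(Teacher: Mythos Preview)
Your overall strategy---blow down the $(-1)$-framed $\bigcirc$ and identify the resulting knot directly---is the natural one, and the paper's proof is also a Kirby-calculus computation. But the paper does \emph{not} blow down immediately: it first blows \emph{up} several times (introducing auxiliary $\pm 1$-framed unknots), then performs handleslides, a slam-dunk, and a Rolfsen twist, and only at the end blows everything down to reach a diagram in which a separating $2$-sphere visibly exhibits the connected-sum decomposition $T_{2n+1,4n+3}\# T_{2n+1,4n+3}$. The auxiliary blow-ups are precisely what separate the two $(2n+1)$-strand packets and make the connected-sum structure apparent.

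Your direct route has a real gap at the step you yourself flag as ``the key step''. The assertion that after the blow-down ``the full twist on all $4n+2$ strands is replaced by a full twist on each of the two groups of $2n+1$ strands separately'' is essentially the content of the lemma, and you do not justify it. A full twist on $4n+2$ strands combined with another full twist does not decompose into block-diagonal full twists on $2n+1+2n+1$ strands without substantial additional input. Two further points compound this: (i) blowing down a $(-1)$-framed unknot inserts a \emph{positive} full twist, not a negative one, so there is no cancellation with the existing full twist in the naive sense; (ii) since $\lk(\bigcirc,J)=0$, the $4n+2$ strands piercing the disc bounded by $\bigcirc$ run $2n+1$ in each direction, so the effect of the blow-down is not that of twisting $4n+2$ coherently-oriented parallel strands---it mixes the two packets nontrivially. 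Handling these oriented-strand subtleties is exactly what the paper's longer sequence of moves accomplishes. Your proposed genus sanity check would confirm the answer once obtained, but it does not identify $J'$; many knots share that genus.
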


\begin{lemma}\label{l:T3737surgery}
The $3$-manifold $S^3_{(4n+2)(4n+3)}(J')$ is $M(-2;\frac{2n}{2n+1},\frac{2n}{2n+1},\frac2{4n+3},\frac2{4n+3})$, a Seifert fibred space over $S^2$ with four singular fibres.
\end{lemma}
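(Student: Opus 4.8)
The plan is to identify $S^3_{(4n+2)(4n+3)}(J')$ via a sequence of Kirby calculus moves that converts the surgery diagram into a standard plumbing/Seifert-fibred description. Since Lemma~\ref{l:T3737} tells us $J' = T_{2n+1,4n+3}\# T_{2n+1,4n+3}$, the starting point is a surgery diagram for large surgery on a connected sum of two torus knots. First I would recall that $n$-surgery along a torus knot $T_{p,q}$ is a Seifert fibred space over $S^2$ — concretely, $S^3_{pq}(T_{p,q})$ is the Seifert space with (at most) three singular fibres of orders $p$, $q$, and $1$, degenerating further when the surgery coefficient is exactly $pq$. For a connected sum, one splices the two Seifert pieces along the regular fibre; doing $N$-surgery along $K_1\#K_2$ with $N$ very large (here $N = (4n+2)(4n+3) = pq$ with $p = 2n+1$ taken with multiplicity two and $q = 4n+3$) produces a graph manifold which, in this balanced case, is again Seifert fibred over $S^2$. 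The expected output has four singular fibres, two of order $2n+1$ coming from the two torus-knot summands and two of order $4n+3$; the Euler number and the Seifert invariants $\frac{2n}{2n+1}$ and $\frac{2}{4n+3}$ are then read off from the continued-fraction expansions of the relevant surgery coefficients.

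The key steps, in order, are: (i) write down the surgery diagram for $(4n+2)(4n+3)$-surgery on $T_{2n+1,4n+3}\#T_{2n+1,4n+3}$, using the standard diagram for a torus knot $T_{p,q}$ as the $(-1)$-framed pushoff picture or as a chain of unknots realising the Seifert structure; (ii) use the connected-sum-of-torus-knots surgery formula (slam-dunks and handle slides) to replace each torus-knot summand by its plumbing graph, keeping track of the central framing that records the total surgery coefficient; (iii) simplify the resulting plumbing tree to a star-shaped graph with a central vertex and four legs, computing each leg's continued fraction; (iv) translate the star-shaped plumbing graph into Seifert invariants, obtaining the central Euler number $-2$ and the four fractions $\frac{2n}{2n+1},\frac{2n}{2n+1},\frac{2}{4n+3},\frac{2}{4n+3}$, which is precisely $M(-2;\frac{2n}{2n+1},\frac{2n}{2n+1},\frac{2}{4n+3},\frac{2}{4n+3})$.

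The main obstacle I anticipate is step (iii)–(iv): carefully carrying out the continued-fraction bookkeeping and verifying the central framing evaluates to exactly $-2$. Connected-sum surgery formulas introduce a $-1$-framed or $0$-framed clasping unknot between the two Seifert pieces, and the precise way it merges into the central vertex — and what it contributes to the Euler number — is exactly the kind of computation where sign and normalisation errors creep in. The cleanest way to control this is to compute an invariant on both sides as a sanity check: the order of $H_1$ of $S^3_N(K)$ is $|N| = (4n+2)(4n+3)$, and the order of $H_1$ of the claimed Seifert space can be computed from the Seifert invariants via the standard formula $|H_1| = |e| \cdot \prod \beta_i$ (suitably interpreted), where the Euler number $e = -2 + 2\cdot\frac{2n}{2n+1} + 2\cdot\frac{2}{4n+3}$; matching these two quantities pins down all the normalisations. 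One should also check that both manifolds are rational homology spheres (true here since $N\neq 0$ and the claimed Seifert base is $S^2$ with nonzero Euler number) and, ideally, compare $d$-invariants in one \spinc structure using Theorem~\ref{t:NiWu} and Equation~\eqref{e:semigroup} on the torus-knot side against the plumbing formula of Ozsv\'ath--Szab\'o on the Seifert side, which would both confirm the identification and set up the computation needed downstream for Proposition~\ref{p:esempi}.
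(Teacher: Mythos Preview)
Your proposal is a plan rather than a proof, and the plan has a genuine gap at its core. You write that ``for a connected sum, one splices the two Seifert pieces along the regular fibre'', but this is not correct: the essential annulus in $E(K_1\#K_2)$ has \emph{meridional} boundary, and the meridian of a torus knot is \emph{not} a regular fibre of the Seifert fibration on its exterior (the regular fibre is the $(p,q)$-curve). Consequently the Seifert fibrations on the two $E(T_{2n+1,4n+3})$ pieces do not match across the gluing, and the exterior of $J'$ is a priori only a graph manifold. After the Dehn filling you still have a separating torus to account for, and the assertion that the result is Seifert fibred over $S^2$ --- rather than a graph manifold with nontrivial JSJ decomposition --- is precisely the content of the lemma, not something you can assume and then ``read off''. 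Your steps (ii)--(iv) never explain how this torus disappears; the proposed sanity checks on $|H_1|$ and a $d$-invariant would corroborate an identification once you have one, but they do not produce it.

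The paper's approach sidesteps this difficulty by \emph{not} starting from the connected-sum description $J' = T_{2n+1,4n+3}\#T_{2n+1,4n+3}$. Instead it backs up to the penultimate diagram (f) in the proof of Lemma~\ref{l:T3737}, i.e.\ the picture \emph{before} the blow-downs that reveal the connected sum. That diagram already consists of unknotted, chain-linked components with small framings, and from there a short explicit sequence of Kirby moves (blowing up the twist regions into chains of $(-2)$-curves, one handle slide, a $0$-dot swap, and a handle cancellation) yields a star-shaped plumbing with four legs; the Seifert invariants are then read off from the continued fractions $[2,\dots,2]^- = \tfrac{2n+1}{2n}$ and $[2n+2,2]^- = \tfrac{4n+3}{2}$. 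If you insist on starting from the connected sum, you would effectively have to undo the blow-downs (g)$\to$(f) first, which is extra work with no payoff.
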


That is, if we choose $m = (4n+2)(4n+3)$, $S^3_m(J)$ is an L-space (in fact, it is $L(4n+3,1)\#L(4n+2,4n+1)$) and $S^3_m(J')$ is a Seifert fibred space with Euler number
\[
-2+2\left(1-\frac1{2n+1}\right) + 2\cdot \frac2{4n+3} = 2\left(\frac2{4n+3}-\frac1{2n+1}\right) < 0.
\]
From~\cite[Corollary 1.4]{OSz-plumbing} we deduce that $\HFo(S^3_{(4n+2)(4n+3)}(J')) = 0$.

We defer the proof of the lemmas above, and we patch the argument together to prove Proposition~\ref{p:esempi} first.

\begin{proof}[Proof of Proposition~\ref{p:esempi}]
There is an obvious 2-sphere intersecting geometrically $K_n$ exactly $4n+2$ times, obtained by capping off the 2-disc shaded in Figure~\ref{f:disco}.
Therefore, $\geomi(K_n) \le 4n+2$.
We now set out to prove the opposite inequality.

Let $m = (4n+2)(4n+3)$, and let us look at the surgery exact triangle for the triad $Y_\infty = S^3_m(J)$, $Y_{-1} = S^3_m(J')$, and $Y_0 = (\es)_m(K)$ above.
\[
\xymatrix{
\HF(Y_\infty)[t,t\inv] \ar[rr]^F & & \HF(Y_{-1})[t,t\inv]\ar[dl]^G\\
 & \tHF(Y_0)\ar[ul]^H
}
\]
Since the cobordism $Y_\infty \leadsto Y_{-1}$ inducing $F$ is obtained by attaching a ($-1$)-framed 2-handle along a null-homologous knot in $Y_\infty$, $F$ maps $\HF(Y_\infty,\ft_i)[t,t\inv]$ to $\HF(Y_{-1},\ft_i)[t,t\inv]$ for each $i$.
The same holds for the cobordism inducing $G$.

For each $i$, the map $F_i: \HF(Y_\infty,\ft_i)[t,t\inv] \to \HF(Y_{-1},\ft_i)[t,t\inv]$ is, up to higher order terms in $U$, $U^k\cdot (1-t)$:
indeed, the tower in $\tHF(Y_0)$ is isomorphic to $\F[U,U\inv]/U\cdot\F[U]$, with all elements of $H_2(Y_0)$ acting as the identity on it, so $H$ vanishes on the towers, and $G$ surjects onto them.

Since $Y_\infty$ is an L-space, the tower in $\HF(Y_{-1})[t,t\inv]/(1-t)$ maps injectively into $\tHF(Y_0,\ft_i)$, so that $G$ is surjective on the tower.
In particular, no tower in $\HF(Y_\infty,\ft_i)$ is in the image of $H$, so that each tower in $\HF(Y_\infty,\ft_i)$ maps injectively into $\HF(Y_{-1},\ft_i)$.
Computing the gradings of the maps involved, this proves that
\[
\dtw(Y_0,\ft_0) = - 2V_0(J') + \frac{m-3}{4}.
\]

Let us now look at the triad $-Y_\infty$, $-Y_0$, $-Y_{-1}$.
\[
\xymatrix{
\HF(-Y_{-1})[t,t\inv] \ar[rr]^{F'} & & \HF(-Y_\infty)[t,t\inv]\ar[dl]^{G'}\\
 & \tHF(-Y_0)\ar[ul]^{H'}
}
\]
The key observation that makes the same argument run is that $\HFred(-Y_{-1})$ is now supported in odd degrees, while the map $F'$ is a sum of maps of even degree;
it follows that each tower $\HF(-Y_\infty,\ft_i)[t,t\inv]/(1-t)$ is mapped isomorphically into $\tHF(-Y_0,\ft_i)$, and therefore
\[
\dtw(-Y_0,\ft_0) = 2V_0(J) - \frac{m+1}4.
\]

Applying Theorem~\ref{t:geomlowerbound}, Lemmas~\ref{l:dT3737} and~\ref{l:V0}, we obtain that
\begin{align*}
\dtw(Y_0) + \dtw(-Y_0) + 1&= 2V_0(J) - 2V_0(J') = 2V_0(T_{4n+2,4n+3}) - 2V_0(T_{2n+1,8n+5}) \\
&= 2(n+1)(2n+1) - 4n(n+1) = 2n+2 \le 2\left\lceil \frac{\geomi(K_n)}4\right\rceil,
\end{align*}
from which $\geomi(K_n) \ge 4n+2$ follows.
\end{proof}

We end this section with the proofs of the two lemmas above.

\begin{figure}
\labellist
\pinlabel (a) at -20 820
\pinlabel (b) at 780 820
\pinlabel (c) at -20 570
\pinlabel (d) at 780 570
\pinlabel (e) at -20 320
\pinlabel (f) at 780 320
\pinlabel (g) at 190 75
\pinlabel {\tiny{\color{blue}$-1$}} at 24 715
\pinlabel {\tiny{\color{red}$-2$}} at 300 899
\pinlabel ${2n+1}$ at 362 855
\pinlabel $2n+1$ at 362 778
\pinlabel {\tiny{\color{blue}$-1$}} at 474 715
\pinlabel {\tiny{\color{red}$0$}} at 749 899
\pinlabel {\tiny{\color{green}$1$}} at 715 913
\pinlabel {\tiny{\color{green}$1$}} at 640 913
\pinlabel {\tiny{\color{purple}$-1$}} at 687 848
\pinlabel {\tiny{\color{purple}$-1$}} at 585 848
\pinlabel {\tiny{\color{blue}-$1$}} at 50 470
\pinlabel {\tiny{\color{red}$0$}} at 278 656
\pinlabel {\tiny{\color{green}$1$}} at 198 669
\pinlabel {\tiny{\color{green}$2$}} at 165 604
\pinlabel {\tiny{\color{purple}$-1$}} at 222 604
\pinlabel {\tiny{\color{purple}$-2$}} at 282 604
\pinlabel {\tiny{\color{blue}$0$}} at 593 462
\pinlabel {\tiny{\color{red}$0$}} at 597 676
\pinlabel {\tiny{\color{green}$1$}} at 560 605
\pinlabel {\tiny{\color{green}$2$}} at 627 562
\pinlabel {\tiny{\color{purple}$-1$}} at 663 564
\pinlabel {\tiny{\color{purple}$-2$}} at 692 654
\pinlabel {\tiny{\color{blue}$0$}} at 146 229
\pinlabel {\tiny{\color{green}$2$}} at 184 329
\pinlabel {\tiny{\color{purple}$-1$}} at 220 330
\pinlabel {\tiny{\color{purple}$-2$}} at 247 419
\pinlabel {\tiny{\color{blue}$-1$}} at 593 230
\pinlabel {\tiny{\color{green}$-2$}} at 631 329
\pinlabel {\tiny{\color{purple}$-1$}} at 668 333
\pinlabel {\tiny{\color{purple}$-2$}} at 692 419
\pinlabel $2$ at 377 35
\pinlabel $2$ at 442 127
\endlabellist
\includegraphics[width = 0.9\textwidth]{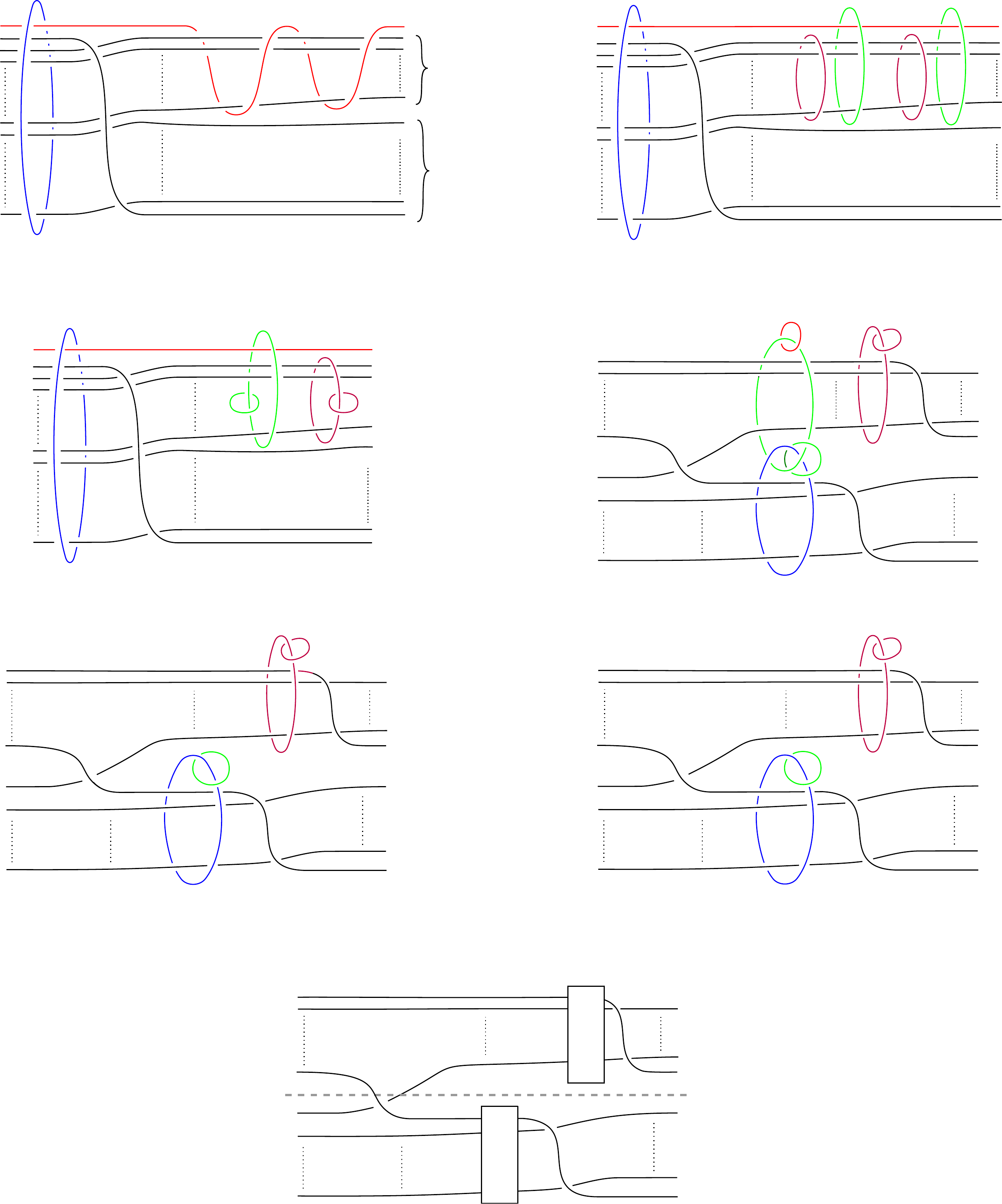}
\caption{The proof of Lemma~\ref{l:T3737}.}\label{f:T3737}
\end{figure}

\begin{proof}[Proof of Lemma~\ref{l:T3737}]
We give a diagrammatic proof, following Figure~\ref{f:T3737}. From top to bottom:
\begin{itemize}
\item[(a)] This is obtained from $(S^3,\bigcirc \cup J)$ by blowing up along the blue curve.
\item[(b)] This is obtained from (a) by blowing up along the purple curves and blowing up negatively along the green curves.
\item[(c)] This is obtained from (b) by sliding one of the green curves over the other, and one of the purple curves over the other.
\item[(d)] This is obtained from (c) by sliding the blue curve over the $+1$-framed green curve.
\item[(e)] This is obtained from (d) by doing a slam dunk of the $0$-framed red curve; this amounts to cancelling both the red component and the $+1$-framed green component.
\item[(f)] This is obtained from (e) by doing a Rolfsen twist along the green curve.
\item[(g)] This is obtained from (f) by blowing down the blue curve and the $-1$-framed purple curve, and then the green curve and the remaining purple curve.
\end{itemize}
Note that (g) displays exactly the connected sum $T_{2n+1,4n+3}\#T_{2n+1,4n+3}$: the dashed line exhibits the 2-sphere giving the connected sum decomposition.
\end{proof}

While it is not necessary for the proof, as a litmus test, we also check that the framing of $J'$ is preserved in the sequence of moves above.
Indeed, following each of the steps, the framing decreases by $(4n+2)^2$ in the first step, and stays constant until the last, when it increases again by $4\cdot(2n+1)^2$.

\begin{figure}[t]
\labellist
\pinlabel ($a^\prime$) at -15 755
\pinlabel $(4n+2)(4n+3)$ at 52 633
\pinlabel $2$ at 17 717
\pinlabel $2$ at 169 697
\pinlabel ($b^\prime$) at 485 755
\pinlabel $4n+2$ at 330 633
\pinlabel $-1$ at 271 737
\pinlabel $-1$ at 477 711
\pinlabel ($c^\prime$) at 32 551
\pinlabel $4n+2$ at 222 605
\pinlabel $-1$ at 130 605
\pinlabel $-1$ at 313 605
\pinlabel $2n+1$ at 176 551
\pinlabel $2n+1$ at 265 551
\pinlabel ($d^\prime$) at 32 416
\pinlabel $0$ at 225 470
\pinlabel $-2(n+1)$ at 105 470
\pinlabel $-2(n+1)$ at 336 470
\pinlabel $-1$ at 170 450
\pinlabel $-1$ at 283 450
\pinlabel ($e^\prime$) at 32 190
\pinlabel $-2(n+1)$ at 104 266
\pinlabel $-2(n+1)$ at 374 222
\pinlabel $-1$ at 258 251
\pinlabel $-1$ at 290 175
\endlabellist
\includegraphics[width = 0.75\textwidth]{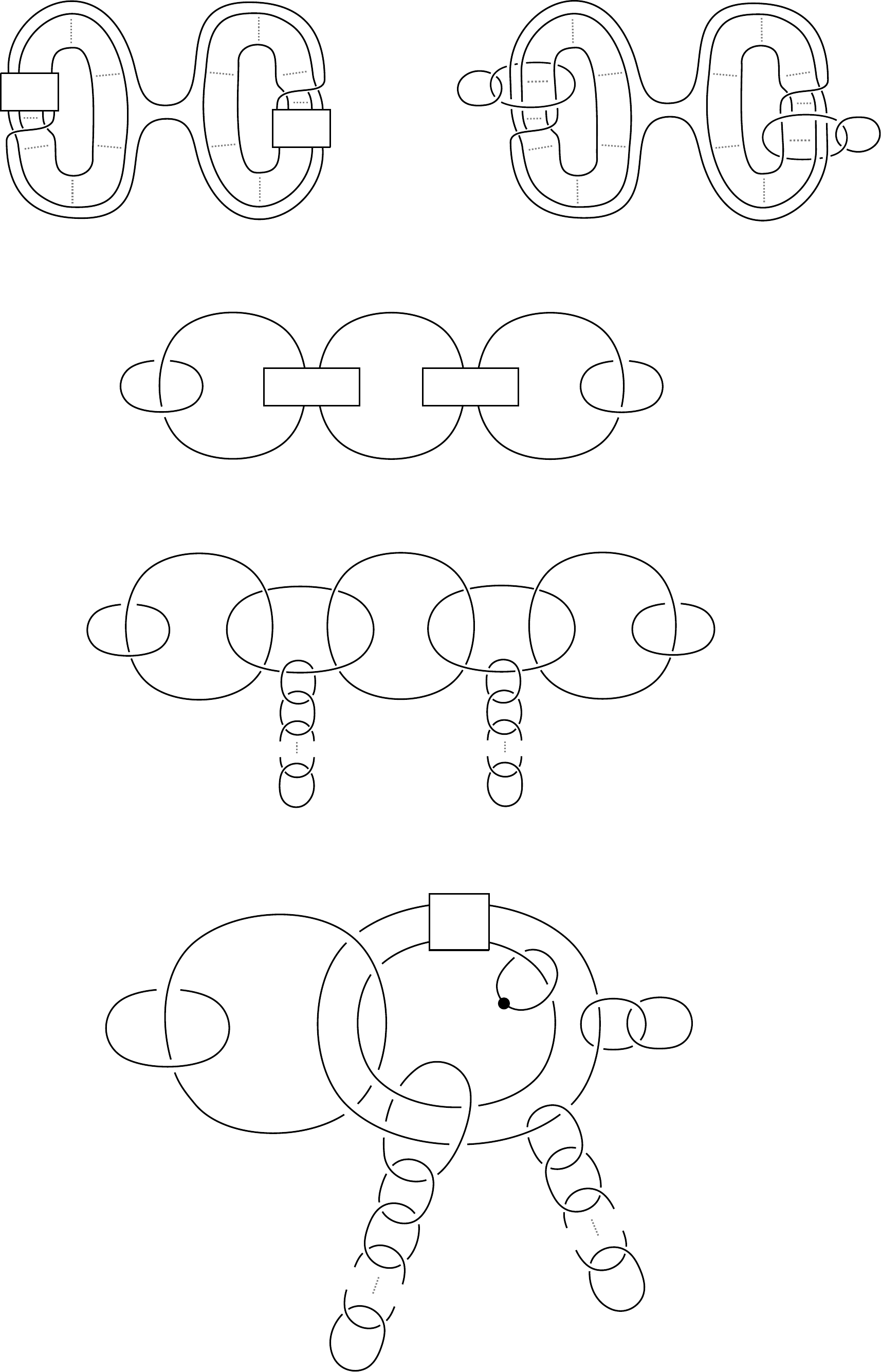}
\caption{The proof of Lemma~\ref{l:T3737surgery}. We abide by the convention that we do not label $-2$-framed components. In ($b^\prime$) the two ``outer'' unknots are $-2$-framed; in ($d^\prime$) and ($e^\prime$), each of the two chains of (unlabelled, hence $-2$-framed) unknots has length $2n$.}\label{f:seifert}
\end{figure}
\begin{proof}[Proof of Lemma~\ref{l:T3737surgery}]
We start from (f) in Figure~\ref{f:T3737}; note that the framing on (the component corresponding to) $J'$ is now $(4n+2)(4n+3)-4(2n+1)^2 = 4n+2$.
We then refer to Figure~\ref{f:seifert}.
\begin{itemize}
\item[($a^\prime$)] This is just obtained from (g) in Figure~\ref{f:T3737} by an isotopy; the blue component correspond to $J'$, and has framing $4n+2$.
\item[($b^\prime$)] This is essentially (f) in Figure~\ref{f:T3737}.
\item[($c^\prime$)] Is obtained by an isotopy from ($b^\prime$).
\item[($d^\prime$)] This is obtained from ($c^\prime$) by blowing up the $(2n+1)+(2n+1)$ twists, and sliding each new $-1$-framed unknot over the next, as done to go from step (b) to (c) in the proof of Lemma~\ref{l:T3737}.
\item[($e^\prime$)] This is obtained by sliding one of the $-1$-framed curves over the other, and by doing 0-dot surgery on the 0-framed component.
\end{itemize}
We can now cancel the 1-handle in Figure~\ref{f:T3737} with the $-1$-framed knot, and therefore obtain a presentation of $S^3_{(4n+2)(4n+3)}(J')$ as a Seifert fibred space over $S^2$ with four singular fibres.
The corresponding Seifert invariants are easily computed from the (negative) continued fraction expansions $[2,\dots,2]^- = \frac{2n+1}{2n}$ and $[2n+2,2]^- = \frac{4n+3}2$.
\end{proof}

We note that that ($c^\prime$) in Figure~\ref{f:seifert} gives a plumbed presentation of $S^3_{(4n+2)(4n+3)}(J')$; the final presentation can also be obtained in the plumbing language by doing a 0-absorption move~\cite{Neumann}.

\appendix
\section{The $0$-shake-slice genus (with Adam Levine)}

The goal of this appendix is to prove Theorem~\ref{t:shake-bound}.
The techniques are similar to the ones employed in the rest of the paper.

Let us start by setting up some notation.
If $K$ is a knot in $S^3$, denote with $X_K$ the trace of the 0-surgery along $K$, which is $B^4$ with a 2-handle attached along $K$ with framing 0;
we write $Y_K = S^3_0(K)$ for the boundary of $X_K$.
We also denote with $-K$ the mirror of $K$, with its orientation reversed.
(However, the orientation will not play any role.)

Recall that the $0$-shake-slice genus $\gsh(K)$ of $K$ is the minimal genus $g(F)$ of a smoothly embedded surface $F$ representing a generator of $H_2(X_K)$.

In the proof, we will let $F$ be a surface whose fundamental class generates $H_2(X_K)$. Let $W = -(X_K \setminus N)$, where $N$ is an open regular neighbourhood of $F$; notice that since $F\cdot F = 0$, $N \cong F\times D^2$ and $\de W = -Y_K \sqcup S$, where $S \cong F\times S^1$.
We will view $W$ as a cobordism from $Y_K$ to $S$.

We want to apply Theorem~\ref{t:dtwcobordism}; the following lemma is the analogue of Lemma~\ref{l:injectiveH1} above.

\begin{lemma}
The inclusion $Y_K \hookrightarrow W$ induces an isomorphism $H_1(Y_K) \to H_1(W)$.
\end{lemma}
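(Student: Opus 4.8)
The plan is to compute $H_1(W)$ outright and then verify that the meridian of $K$ is sent to a generator. Recall that $F$ represents a generator of $H_2(X_K)\cong\Z$; since the intersection form of $X_K$ is $\langle 0\rangle$, the class $[F]$ has trivial self-intersection, so $N$ is a trivial $D^2$-bundle, $N\cong F\times D^2$, and $S=\partial N\cong F\times S^1$, with fibre circle $\delta=\{*\}\times S^1$. I would write the trace as $X_K=W\cup_S N$ (up to collars) and feed the standard facts $H_1(X_K)=0$ (the trace has no $1$-handles) and $H_1(X_K)$ torsion-free into the long exact sequence of the pair $(X_K,W)$, using excision to identify $H_*(X_K,W)\cong H_*(N,\partial N)$.

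The key input is the vanishing of $j\colon H_2(X_K)\to H_2(X_K,W)$. The Thom isomorphism for the trivial bundle $N$ gives $H_2(N,\partial N)\cong H_0(F)\cong\Z$ and $H_1(N,\partial N)=0$; moreover the composite $H_2(N)\to H_2(X_K)\xrightarrow{j}H_2(X_K,W)\cong H_2(N,\partial N)$ is just the natural map $H_2(N)\to H_2(N,\partial N)$, which vanishes because $H_2(\partial N)\to H_2(N)$ is onto (the zero section is homologous in $N$ to a section lying in $\partial N$). Since the zero section represents a generator of $H_2(X_K)$, it follows that $j=0$. Plugging $j=0$, $H_1(X_K)=0$ and $H_1(X_K,W)=H_1(N,\partial N)=0$ into
\[
H_2(X_K)\xrightarrow{\ j\ }H_2(X_K,W)\xrightarrow{\ \partial\ }H_1(W)\longrightarrow H_1(X_K)
\]
shows that $\partial$ is an isomorphism, so $H_1(W)\cong\Z$; tracing the generator $[\{*\}\times D^2]$ of $H_2(N,\partial N)$ through $\partial$ identifies the generator of $H_1(W)$ with $\delta$.

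It then remains to see that the meridian $\mu$ of $K$ — which generates $H_1(Y_K)\cong\Z$ — maps to $\pm\delta$ in $H_1(W)$. For this I would use the cocore $D$ of the $2$-handle of $X_K$: it is a properly embedded disc with $\partial D=\mu$, and by Poincar\'e--Lefschetz duality the generators of $H_2(X_K,\partial X_K)$ and $H_2(X_K)$ pair to $\pm 1$, so $[D]\cdot[F]=\pm 1$; after a small perturbation $D$ meets $F$ transversely in points of total sign $\pm 1$. Deleting from $D$ the interiors of the normal $D^2$-fibres through these points yields a planar surface properly embedded in $W$ whose boundary is $\mu$ together with a signed collection of fibre circles summing to $\pm\delta$ in $H_1(W)$; hence $[\mu]=\pm[\delta]$. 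Combining the two computations, $H_1(Y_K)=\Z\langle\mu\rangle\to H_1(W)=\Z\langle\delta\rangle$ sends $\mu\mapsto\pm\delta$ and is therefore an isomorphism.

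The step I expect to need the most care is the surjectivity half, i.e.\ showing $\mu$ is a \emph{primitive} class in $H_1(W)\cong\Z$ and not merely of infinite order; injectivity alone is a mild variant of Lemma~\ref{l:injectiveH1}. The main bookkeeping is keeping track of orientations (of $F$, of the normal fibres, and of $\mu$ relative to $\delta$) in the cocore argument, but the resulting sign is immaterial for the conclusion.
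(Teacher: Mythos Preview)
Your proof is correct and close in spirit to the paper's, though it is organised around a different exact sequence. The paper uses the Mayer--Vietoris sequence for $X_K = W \cup_S N$: surjectivity of $H_2(N)\to H_2(X_K)$ forces $\beta\colon H_1(S)\to H_1(W)\oplus H_1(N)$ to be an isomorphism, and splitting off the $H_1(F)$-summand shows the fibre class $[f]$ generates $H_1(W)$. You instead use the long exact sequence of the pair $(X_K,W)$ together with excision $H_*(X_K,W)\cong H_*(N,\partial N)$ and the Thom isomorphism; this gives $H_1(W)\cong\Z$ more directly, with the boundary map picking out the fibre circle $\delta$ as generator. The two arguments are formally equivalent (Mayer--Vietoris is built from such pair sequences), so neither is more general, but your route avoids having to disentangle the $H_1(W)\oplus H_1(N)$ summands.

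Where your write-up is genuinely more explicit is the final step. The paper simply asserts ``by construction'' that the fibre $f$ is homologous in $W$ to a generator of $H_1(Y_K)$; your cocore argument --- using that the cocore disc $D$ has $[D]\cdot[F]=\pm1$, then excising normal discs to produce a planar surface in $W$ witnessing $[\mu]=\pm[\delta]$ --- is exactly the justification the paper suppresses. Your closing remark that primitivity (not just infinite order) is the nontrivial half is well taken, and the cocore argument handles it.
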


\begin{proof}
The Mayer--Vietoris long exact sequence for the decomposition $X_K = W \cup N$ reads:
\[
\xymatrix{
H_2(W)\oplus H_2(N) \ar[r]^-{\alpha} & H_2(X_K)\ar[r] & H_1(S)\ar[r]^-\beta & H_1(W)\oplus H_1(N)\ar[r] & H_1(X_K).
}
\]
Since $[F]$ is a generator of $H_2(X_K)$ by assumption, the map $H_2(N) \to H_2(X_K)$ induced by the inclusion is surjective, and therefore so is $\alpha$.
This implies that the map $\beta$ is injective, and hence, since $H_1(X_K) = 0$, an isomorphism.

Recall now that $H_1(S) = H_1(F) \oplus \Z[f]$, where $f$ is the $S^1$-fibre.
The map $\beta$ is an isomorphism onto $H_1(N)$ when restricted to the summand $H_1(F)$ of $H_1(S)$; moreover, the fibre $f$ is in the kernel of the inclusion $H_1(S)\to H_1(F)$, and hence $\beta$ maps $[f]$ to a generator of $H_1(W)$.

By construction, though, $[f]\in H_1(S)$ is homologous in $W$ to a generator of $H_1(Y_K)$, and hence the inclusion $Y_K\hookrightarrow W$ induces an isomorphism on $H_1$.
\end{proof}

As we did above, we will omit the \spinc structure from the notation, when there is a unique torsion \spinc structure.

\begin{proof}[Proof of Theorem~\ref{t:shake-bound}]
By the lemma above, the inclusion $Y_K\hookrightarrow W$ induces an injection $H_1(Y_K) \to H_1(W)$.
Moreover, since the intersection form of $X_K$ is $(0)$, $W$ is a negative semi-definite cobordism.

The assumptions to apply Theorem~\ref{t:dtwcobordism} are satisfied, and we can write the inequality
\[
4\dtw(Y_K) + 2b_1(Y_K) + c_1(\fs)^2 + b^-(W) \le 4\dtw(S) + 2b_1(S).
\]
By Proposition~\ref{p:dtwcomputations}, $\dtw(S) = \frac12(-1)^{g+1}$.
Since $W$ has trivial intersection form, $c_1(\fs)^2 = 0$, and therefore
 \[
4\dtw(Y_K) + 2 \le 8\left\lceil \frac g2 \right\rceil,
\]
from which the statement follows.
\end{proof}

We can also apply the same theorem to $-K$;
this, and the fact that $\dtw(Y_K) = 2V_0(-K)-\frac12$ (Proposition~\ref{p:0surgery}), allows us to recast the statement of Theorem~\ref{t:shake-bound} as follows:
\[
\gsh(K) \ge 2\max\{V_0(K), V_0(-K)\} - 1.
\]

\bibliographystyle{amsplain}
\bibliography{nodificazione}
\end{document}